\pgfplotsset{compat=newest}
\newcommand{\adj}{\bigcup_{\mathcal{F}}}
\tikzset{
  curarrow/.style={
  rounded corners=8pt,
  execute at begin to={every node/.style={fill=red}},
    to path={-- ([xshift=50pt]\tikztostart.center)
    |- (#1) node[] {}
    -| ([xshift=-50pt]\tikztotarget.center)
    -- (\tikztotarget)}
    }
}
\newtheorem{theorem}{Theorem}[section]
\newtheorem{proposition}[theorem]{Proposition}
\newtheorem{definition}[theorem]{Definition}  
\newtheorem{lemma}[theorem]{Lemma}   
\newtheorem{remark}[theorem]{Remark}
\newtheorem{conjecture}[theorem]{Conjecture}
\title{A Non-Hausdorff de Rham Cohomology}
\author{David O'Connell}
\date{\small{\textit{Okinawa Institute of Science and Technology \\
1919-1 Tancha, Onna-son, Okinawa 904-0495, Japan}}}
\begin{document}

\maketitle

\begin{abstract}
In this paper we introduce and study the basic properties of de Rham cohomology for a certain class of non-Hausdorff manifolds. After a careful discussion of non-Hausdorff differential forms, we then provide a description of de Rham cohomology via Mayer-Vietoris sequences. We then use these sequences to prove both de Rham's Theorem and the Gauss-Bonnet theorem for our non-Hausdorff manifolds. Regarding the latter, we will see that the desired equality requires additional counterterms coming from the geodesic curvature of certain Hausdorff-violating submanifolds.
\end{abstract}

\section*{Introduction}
The Hausdorff property says that any pair of distinct points can be separated by disjoint open sets. In standard formulations of differential geometry, one assumes that all manifolds are Hausdorff spaces. This is normally justified on technical grounds: it can be shown that any locally-Euclidean, second-countable topological space admits partitions of unity subordinate to an arbitrary open cover, provided that the space is Hausdorff. These partitions of unity can then be used to build various global structures by patching together locally-defined ones. Conversely, any non-Hausdorff, locally-Euclidean second-countable space cannot admit a partition of unity subordinate to any cover by Hausdorff open sets, even when paracompactness is assumed. \\

Observations such as the above often render non-Hausdorff manifolds as too bothersome to include within standard treatments of differential geometry. Nonetheless, non-Hausdorff manifolds still occur within certain domains of mathematics, despite their technical inconvenience. Indeed, non-Hausdorff manifolds can be found within foliation theory and/or non-commutative geometry \cite{crainic1999remark,haefliger1957varietes, gauld2014non, deeley2022fell, kasparov1991groups, francis2023h, buss2012non}, and also within certain areas of mathematical physics \cite{luc2020interpreting, muller2013generalized, penrose1979singularities, earman2008pruning,hajicek1971causality, hajicek1970extensions, visser1995lorentzian, heller2011geometry, woodhouse1988geroch}  .  \\

In the seminal paper \cite{haefliger1957varietes}, Haefliger and Reeb observed that non-Hausdorff $1$-manifolds may naturally arise as the leaf spaces of certain foliations. Of special importance is their construction of non-Hausdorff manifolds via adjunction spaces. Loosely speaking, the authors showed that certain non-Hausdorff $1$-manifolds may be constructed by gluing together copies of the real line or along open subsets, whilst leaving the boundaries of these subsets unidentified, as pictured in Figure \ref{fig: line with two origins}. In a somewhat different context, Hajicek showed that any non-Hausdorff manifold is naturally covered by maximal Hausdorff open submanifolds \cite{hajicek1971causality}. These observations motivated the work of \cite{oconnell2023non}, which developed a general treatment of non-Hausdorff topological manifolds in terms of generalised adjunction spaces. In \cite{oconnell2023vector} this notion of adjunction space was extended to include smooth non-Hausdorff manifolds and the vector bundles fibred over them. \\

\begin{figure}
    \centering
    \begin{tikzpicture}
 
 \draw[] (-1,0)--(4,0);
 \draw[] (-1,-1)--(4,-1);
 \draw[dotted] (-0.5,0)--(-0.5,-1);
 \draw[dotted] (-0,0)--(-0,-1);
 \draw[dotted] (-0.5,0)--(-0.5,-1);
 \draw[dotted] (0.5,0)--(0.5,-1);
 \draw[dotted] (1,0)--(1,-1);
 \draw[dotted] (2,0)--(2,-1);
 \draw[dotted] (2.5,0)--(2.5,-1);
 \draw[dotted] (3,0)--(3,-1);
 \draw[dotted] (3.5,0)--(3.5,-1);

 \fill(1.5,0) circle[radius=0.06cm] {};
 \fill(1.5,-1) circle[radius=0.06cm] {};

 \draw[->] (4.5,-0.25) to[out=20, in=160] (6.5,-0.25);

 \draw[] (7,-0.5)--(9.4, -0.5);
 \draw[] (9.6, -0.5)--(12,-0.5);
 
 \fill(9.5,-0.25) circle[radius=0.06cm] {};
 \fill(9.5,-0.75) circle[radius=0.06cm] {};
 
\end{tikzpicture}
    \caption{\footnotesize{The line with two origins, the prototypical example of a non-Hausdorff manifold. Here it is constructed by gluing together two copies of $\mathbb{R}$ along the open subset $(\infty,0)\cup (0,\infty)$.}} 
    \label{fig: line with two origins}
\end{figure}

In this paper we will extend the formalism of \cite{oconnell2023non} and \cite{oconnell2023vector} once more by studying the differential forms that a non-Hausdorff manifold may admit. Motivated by Hajicek's original result, we will derive descriptions of non-Hausdorff de Rham cohomology via certain Mayer-Vietoris sequences. We will mostly follow the standard derivation of these sequences (found in \cite{bott1982differential}, say), with some key modifications ultimately due to certain technical difficulties surrounding the non-existence of arbitrary partitions of unity. After this, we use similar Mayer-Vietoris sequences for smooth singular cohomology in order to prove a non-Hausdorff version of de Rham's theorem. Finally, we will finish the paper with a discussion of the Gauss-Bonnet theorem for non-Hausdorff manifolds. \\ 

This paper is organised as follows. In Section 1 we will recall the basic formalism of non-Hausdorff manifolds, taken from \cite{oconnell2023non} and \cite{oconnell2023vector}. Included are the basic notions of colimits of smooth manifolds, descriptions of the algebras of smooth functions, and a discussion of non-Hausdorff vector bundles. Of central importance is the concept that sections of any bundle will satisfy a certain \textit{fibre product formula}. In this context, we mean that any section over a non-Hausdorff manifold can be uniquely described by a collection of sections defined over Hausdorff submanifolds, together with some consistency conditions on their mutual overlaps. We will spend due time on this observation, since it is a fundamental concept that underpins the remainder of the paper. \\

In Section 2 we will describe the differential forms on a non-Hausdorff manifold. Before doing so, we will first take some time to explain a description of vector fields in function-theoretic terms. This interpretation of vector fields needs particular care, since the type of bump functions used in locality arguments will generally not exist in our setting. Once this is done, we will describe the space of non-Hausdorff differential forms, as well as their exterior derivative. We then finish the section with a discussion of integration over non-Hausdorff manifolds. As a small aside, we will see that Stoke's theorem fails in a particularly controlled manner for our non-Hausdorff manifolds. \\

In Section 3 we will discuss the de Rham cohomology of non-Hausdorff manifolds. After making some important assumptions on the intersection properties of Hausdorff submanifolds, we will derive the aforementioned Mayer-Vietoris sequences for de Rham cohomology. In the case that a non-Hausdorff manifold is built from two Hausdorff spaces, this sequence is similar to the Hausdorff setting (cf. \cite{bott1982differential}). However, in the more-general case, we will derive a Čech-de Rham bicomplex that relates the de Rham cohomology of a non-Hausdorff manifold to a particular cover-dependent Čech cohomology (cf. \cite[§3]{oconnell2023vector}). \\

In Section 4 we will prove de Rham's theorem for non-Hausdorff manifolds. After some careful derivations of (smooth) singular homology, we may use our Mayer-Vietoris sequences together with and ``integration over chains" pairing to describe an isomorphism between de Rham cohomology and singular homology. Our approach will generally follow that of \cite{lee2013smooth}, with some important modifications due to the failure of Whitney's Embedding theorem in the non-Hausdorff case. \\

Finally, in Section 5 we will prove a non-Hausdorff version of the Gauss-Bonnet theorem for closed $2$-manifolds. As we will see, the integration formulas of Section 2, together with the Mayer-Vietoris sequences of Sections 3 and 4 will allow us to prove a Gauss-Bonnet theorem by decomposing the total scalar curvature into an integral over Hausdorff submanifolds. Applying the usual Gauss-Bonnet to each of these Hausdorff pieces will give the desired relationship. Although we can derive the result by reducing everything to the Hausdorff case, there is an important difference: for a non-Hausdorff $2$-manifold there will be additional contributions to curvature coming from the Hausdorff-violating submanifolds that sit inside it. \\

Throughout this paper we will assume that all manifolds, Hausdorff or otherwise, are locally-Euclidean second countable topological spaces. As a convention, we will use boldface to distinguish non-Hausdorff manifolds from their Hausdorff cousins. 

\section{Smooth Non-Hausdorff Manifolds}
We will now recall the basic topological properties of non-Hausdorff manifolds. The results here are taken from \cite{oconnell2023non} and \cite{oconnell2023vector}, so we will state them without proof.

\subsection{The Topology of non-Hausdorff Manifolds}
An adjunction of two topological spaces $X$ and $Y$ is formed from some subset $A$ of $X$ and a continuous map $f: A\rightarrow Y$. The adjunction space $X\cup_f Y$ is then defined to be the quotient of the disjoint union $X\sqcup Y$ by an equivalence relation that identifies every element in $A$ with its image under $f$. In categorical terms, the adjunction space $X\cup_f Y$ can be shown to be the pushout of the diagram \vspace{-3mm}
\begin{center}
    \begin{tikzcd}[row sep=3.5em]
A \arrow[r, "\iota_A"] \arrow[dr, phantom, "\lrcorner", very near start]\arrow[d, "f"'] & X \arrow[d, "\phi_X"] \\
Y \arrow[r, "\phi_Y"']                 & X\cup_f Y            
\end{tikzcd}
\end{center}
in the category $\textsf{TOP}$ \cite{brown2006topology, hatcher2002algebraic}, where here the maps $\phi_X$ and $\phi_Y$ send each point in $X$ or $Y$ into its corresponding equivalence class in the quotient space. \\

Motivated by this construction, we will now use this idea to form non-Hausdorff manifolds. To begin with, we would like to consider more general diagrams than the above. Following \cite{oconnell2023non}, we will consider diagrams consisting of manifolds, open submanifolds, and maps between them. Formally, we will consider a tuple $\mathcal{F}=(\textsf{M}, \textsf{A}, \textsf{f})$, where:
\begin{itemize}
    \item $\textsf{M}=\{ M_i \}_{i \in I}$ is a finite set of Hausdorff topological manifolds,
    \item $\textsf{A} = \{M_{ij}\}_{i,j \in I}$ is a set of bi-indexed open submanifolds, satisfying $M_{ij} \subseteq M_i$ for all $i,j$ in $I$, and
    \item $\textsf{f} = \{ f_{ij} \}_{i,j \in I}$ is a set of continuous maps $f_{ij}:M_{ij} \rightarrow M_j$. 
\end{itemize}
There is no guarantee that the data of $\mathcal{F}$ will yield a well-defined quotient space, so we will need to restrict our attention to the following triples.  
\begin{definition}\label{DEF: Adjunctive System}
The triple $\mathcal{F} = (\textsf{M}, \textsf{A}, \textsf{f})$ is called an adjunction system if it satisfies the following conditions for all $i,j \in I$.
\begin{enumerate}[itemsep=0.7mm]\setlength{\itemindent}{1em}
    \item[\textbf{A1)}] $M_{ii} = M_i$ and $f_{ii} = id_{M_i}$
    \item[\textbf{A2)}] $M_{ji} = f_{ij}(M_{ij})$, and $f_{ij}^{-1} = f_{ji}$
    \item[\textbf{A3)}]  $f_{ik}(x) = f_{jk} \circ f_{ij}(x)$ for each $x\in M_{ij}\cap M_{ik}$
\end{enumerate}
\end{definition}
\noindent 
The conditions listed above ensure that the resulting quotient space is well-defined. Given an adjunction space $\mathcal{F}$, we may define the adjunction space subordinate to it as:
$$ \textbf{M} := \adj M_i = \faktor{\left(\bigsqcup_{i\in I} M_i\right)}{\sim}$$
via the relation $\sim$ that identifies points $(x,i)$ and $(y,j)$ in the disjoint union whenever $f_{ij}(x) = y$. We will denote the points in $\textbf{M}$ by tuples $[x,i]$, where $x \in M_i$. Since $\textbf{M}$ is a quotient space, these points are actually equivalence classes of the form: 
$$  [x,i] := \Big\{ (y,j) \in \bigsqcup_{i\in I} M_i \ \big| \ y=f_{ij}(x)   \Big\}.  $$
We will denote by $\phi_i$ the continuous maps that send each $x$ in $M_i$ to its equivalence class $[x,i]$ in $\textbf{M}$. By construction, the open sets of $\textbf{M}$ can be usefully described with the following condition.

\begin{lemma}\label{LEM: open subsets in M}
    A subset $U$ of $\textbf{M}$ is open if and only if the set $\phi_i^{-1}(U)$ is open in $M_i$, for all $i$ in $I$.
\end{lemma}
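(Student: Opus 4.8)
The plan is to reduce the statement to the two defining universal properties in play: the quotient topology on $\textbf{M}$ and the disjoint-union (coproduct) topology on $\bigsqcup_{i\in I} M_i$. Write $q : \bigsqcup_{i\in I} M_i \to \textbf{M}$ for the quotient map realizing $\textbf{M}$ as $\left(\bigsqcup_{i\in I} M_i\right)/\!\sim$, and let $\iota_i : M_i \hookrightarrow \bigsqcup_{j\in I} M_j$ denote the canonical inclusion of the $i$-th summand. The key bookkeeping observation is that $\phi_i = q \circ \iota_i$ by construction, since $\phi_i$ sends $x \in M_i$ to the class $[x,i]$, which is precisely the image of $(x,i)$ under $q$.

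First I would invoke the definition of the quotient topology: a subset $U \subseteq \textbf{M}$ is open if and only if $q^{-1}(U)$ is open in $\bigsqcup_{i\in I} M_i$. Next I would invoke the definition of the disjoint-union topology: a subset $V \subseteq \bigsqcup_{i\in I} M_i$ is open if and only if $\iota_i^{-1}(V)$ is open in $M_i$ for every $i \in I$ (equivalently, $V$ meets each summand in an open set, under the usual identification of $M_i$ with its image). Applying this with $V = q^{-1}(U)$ shows that $q^{-1}(U)$ is open iff $\iota_i^{-1}(q^{-1}(U))$ is open in $M_i$ for all $i$. Finally I would substitute the identity $\iota_i^{-1}(q^{-1}(U)) = (q\circ\iota_i)^{-1}(U) = \phi_i^{-1}(U)$, which chains the two equivalences into the claimed biconditional.

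Since every step is a direct unwinding of definitions, I do not expect a genuine obstacle here; the lemma is essentially a restatement of how the quotient topology interacts with coproducts. The only point requiring a modicum of care is verifying the factorization $\phi_i = q\circ\iota_i$ and confirming that the relation $\sim$ defining $\textbf{M}$ is the equivalence relation generated by the identifications $(x,i) \sim (f_{ij}(x), j)$ — but this is immediate from the construction above, and conditions \textbf{A1}–\textbf{A3} are exactly what guarantees that $\sim$ is a genuine equivalence relation, so that the quotient and its topology are well defined. One could alternatively phrase the whole argument categorically: $\textbf{M}$ is the colimit of the diagram $\mathcal{F}$ in $\textsf{TOP}$, such colimits are computed as quotients of coproducts, and the final-topology characterization of the colimit is precisely the stated condition.
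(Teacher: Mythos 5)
Your proof is correct: the paper states this lemma without proof (importing it from the cited earlier work), and your argument — factoring $\phi_i = q\circ\iota_i$ and chaining the universal properties of the quotient topology and the coproduct topology — is exactly the standard justification, with the role of conditions \textbf{A1}--\textbf{A3} in making $\sim$ an equivalence relation correctly noted. Nothing is missing.
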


Lemma 1.3 of \cite{oconnell2023non} ensures that the above adjunction space together with the maps $\phi_i$ satisfy a certain universal property. This allows us to view $\textbf{M}$ as the colimit of the diagram formed from the data in $\mathcal{F}$. 

In principle, quotienting some collection of manifolds may also spoil local charts. However, we may preserve the local nature of the manifolds $M_i$ by imposing some conditions on the gluing maps $f_{ij}$ and the submanifolds $M_{ij}$. The following result captures the finer details. 

\begin{lemma}\label{LEM: fij and Aij open implies phi maps open and M gen manifold}
Let $\mathcal{F}$ be an adjunction system in which the maps $f_{ij}$ are all open topological embeddings. Then
\begin{enumerate}
    \item the maps $\phi_i: M_i \rightarrow \textbf{M}$ are all open topological embeddings, and
    \item the adjunction space subordinate to $\mathcal{F}$ is locally-Euclidean and second-countable.
\end{enumerate}
\end{lemma}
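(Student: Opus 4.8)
The plan is to verify the two claims directly from the description of the topology on $\textbf{M}$ given by Lemma \ref{LEM: open subsets in M}, using the hypothesis that every $f_{ij}$ is an open topological embedding. Throughout, the key technical fact to establish first is a \emph{saturation} formula: for an open set $V \subseteq M_i$, one should compute $\phi_j^{-1}(\phi_i(V))$ for each $j$. Unwinding the definition of the equivalence relation $\sim$, a point $(y,j)$ lies in $\phi_i(V)$ precisely when $y = f_{ij}(x)$ for some $x \in V \cap M_{ij}$, i.e. $\phi_j^{-1}(\phi_i(V)) = f_{ij}(V \cap M_{ij})$. Since $M_{ij}$ is open in $M_i$ and $f_{ij}$ is an open map, this set is open in $M_j$; moreover $\phi_i^{-1}(\phi_i(V)) = V$ because $f_{ii} = \mathrm{id}$ and $f_{ij}$ is injective with $f_{ij}^{-1} = f_{ji}$ (conditions \textbf{A1}, \textbf{A2}). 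Hence by Lemma \ref{LEM: open subsets in M}, $\phi_i(V)$ is open in $\textbf{M}$, which shows $\phi_i$ is an open map. Injectivity of $\phi_i$ is immediate since $[x,i] = [x',i]$ forces $x' = f_{ii}(x) = x$; continuity is part of the construction; and an injective, continuous, open map onto its image is a topological embedding. This proves part (1).

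For part (2), local-Euclideanness follows quickly from part (1): every point of $\textbf{M}$ has the form $[x,i]=\phi_i(x)$, and since $\phi_i$ is an open embedding, a Euclidean chart around $x$ in $M_i$ pushes forward to a Euclidean chart around $[x,i]$ in $\textbf{M}$; the open sets $\phi_i(M_i)$ cover $\textbf{M}$ because every point is in the image of some $\phi_i$. For second-countability, I would use that each $M_i$ is second countable, so it has a countable base $\mathcal{B}_i$; then $\bigcup_{i \in I} \{ \phi_i(B) : B \in \mathcal{B}_i \}$ is a countable family of open sets of $\textbf{M}$ (countable because $I$ is finite and each $\mathcal{B}_i$ is countable), and one checks it is a base: given an open $U \subseteq \textbf{M}$ and a point $[x,i] \in U$, the set $\phi_i^{-1}(U)$ is open in $M_i$ by Lemma \ref{LEM: open subsets in M}, so it contains some $B \in \mathcal{B}_i$ with $x \in B$, whence $[x,i] \in \phi_i(B) \subseteq U$ using that $\phi_i$ is injective and $\phi_i^{-1}(\phi_i(B)) = B$.

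The main obstacle, and the one deserving the most care, is the saturation computation $\phi_j^{-1}(\phi_i(V)) = f_{ij}(V \cap M_{ij})$ and the associated bookkeeping with the equivalence relation; it is here that conditions \textbf{A1}--\textbf{A3} and the openness/embedding hypothesis on the $f_{ij}$ are all genuinely used. In particular one must be careful that $\phi_i(V)$ is genuinely saturated with respect to $\sim$ so that Lemma \ref{LEM: open subsets in M} applies, and that the cocycle condition \textbf{A3} is what guarantees the maps $\phi_i$ are well defined and mutually compatible on overlaps. Everything after that point — the chart-pushforward argument and the countable-base argument — is routine transport of structure along the open embeddings $\phi_i$, and I would present it briskly.
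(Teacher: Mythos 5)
Your proof is correct: the saturation computation $\phi_j^{-1}(\phi_i(V)) = f_{ij}(V\cap M_{ij})$ combined with Lemma \ref{LEM: open subsets in M} is exactly the standard argument, and the transport of charts and of a countable base along the open embeddings $\phi_i$ (using finiteness of $I$) is handled properly. The paper itself states this lemma without proof, citing \cite{oconnell2023non}, and your argument matches the one given there in all essentials.
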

The above result ensures that our adjunction spaces $\textbf{M}$ are locally-Euclidean, second-countable spaces, thereby justifying our notation. Since the canonical maps $\phi_i$ are now embeddings, we may also view the Hausdorff manifolds $M_i$ as genuine subspaces of $\textbf{M}$. To simplify notation, we will regularly identify $M_i$ with it's image $\phi_i(M_i)$, viewed as an open submanifold of $\textbf{M}$. \\

Lemma \ref{LEM: fij and Aij open implies phi maps open and M gen manifold} provides some conditions under which local charts of each $M_i$ may be preserved in the adjunction process. However, the Hausdorff property may or may not be preserved under these conditions.\footnote{Consider, for example, a binary adjunction in which $M_1 = M_2 = M_{12} = \mathbb{R}$, with $f_{12}=id$. The resulting adjunction space is $\mathbb{R}$. Alternatively, we may take $A = (-\infty, 0)$, and the adjunction space will be non-Hausdorff.} The key observation here revolves around the boundaries of the submanifolds $M_i$ -- if we identify $M_{ij}$ and $M_{ji}$ in the adjunction process, but leave their relative boundaries unidentified, then these boundaries (provided they exist) will violate the Hausdorff property in $\textbf{M}$. The following result summarises some useful facts surrounding this observation.

\begin{theorem}\label{THM: Summary of topological properties}
Let $\mathcal{F}$ be an adjunctive system that satisfies the criteria of Lemma \ref{LEM: fij and Aij open implies phi maps open and M gen manifold}, and let $\textbf{M}$ denote the adjunction space subordinate to $\mathcal{F}$. Suppose furthermore that each gluing map $f_{ij}: M_{ij} \rightarrow M_{ji}$ can be extended to a homeomorphism $\overline{f_{ij}}: Cl^{M_{i}}(M_{ij})\rightarrow Cl^{M_{j}}(M_{ji})$ such that the $\overline{f_{ij}}$ satisfy the conditions of Definition \ref{DEF: Adjunctive System}. Then:
\begin{enumerate}
    \item The Hausdorff-violating points in $\textbf{M}$ occur precisely at the $\textbf{M}$-relative boundaries of the subspaces $M_{ij}$.  
    \item $[x,i]$ and $[y,j]$ violate the Hausdorff property in $\textbf{M}$ if and only if $x \in \partial^{M_i} (M_{ij})$ and $y\in \partial^{M_j} (M_{ji})$ and $\overline{f_{ij}}(x) = y$.
    \item Each $M_i$ is a maximal Hausdorff open submanifold of $\textbf{M}$.
    \item $\textbf{M}$ is paracompact.
    \item If each $M_i$ is compact, then $\textbf{M}$ is compact.
\end{enumerate}
\end{theorem}

\noindent As mentioned in the introduction, non-Hausdorff manifolds will not admit partitions of unity subordinate to an arbitrary open cover. The following is taken from \cite[Thm 2.6]{oconnell2023non}. 

\begin{lemma}\label{LEM: no partitions of unity}
    Let $\mathcal{U}$ be an open set of a non-Hausdorff manifold $\textbf{M}$ such that each element of $\mathcal{U}$ is Hausdorff. Then there is not a partition of unity subordinate to $\mathcal{U}$. 
\end{lemma}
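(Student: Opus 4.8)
The plan is to argue by contradiction, using that a continuous real-valued function cannot separate a pair of non-separable points. First I would unwind the definitions. Write $\mathcal{U}=\{U_\alpha\}_\alpha$ for the members of the cover, each of which is Hausdorff in the subspace topology, and suppose $\{\rho_\alpha\}_\alpha$ were a partition of unity subordinate to $\mathcal{U}$: a family of smooth functions $\rho_\alpha:\textbf{M}\to[0,1]$ with $\mathrm{supp}(\rho_\alpha)\subseteq U_\alpha$, locally finite supports, and $\sum_\alpha \rho_\alpha\equiv 1$. Since $\textbf{M}$ is not Hausdorff, there is a pair of distinct points $p\neq q$ that cannot be separated by disjoint open sets; equivalently, every neighbourhood of $p$ meets every neighbourhood of $q$. (Theorem \ref{THM: Summary of topological properties} describes such a pair explicitly as lying on the $\textbf{M}$-relative boundaries of some $M_{ij}$ and $M_{ji}$, but for the present argument only the bare failure of the Hausdorff property is needed.)

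The key observation is that any continuous $f:\textbf{M}\to\mathbb{R}$ satisfies $f(p)=f(q)$. Indeed, if $f(p)\neq f(q)$, then disjoint open intervals around $f(p)$ and $f(q)$ pull back under $f$ to disjoint open neighbourhoods of $p$ and $q$, contradicting their non-separability. Applying this to the partition of unity: from $\sum_\alpha \rho_\alpha(p)=1$ there is an index $\beta$ with $\rho_\beta(p)>0$, and then $\rho_\beta(q)=\rho_\beta(p)>0$ as well. Hence both $p$ and $q$ lie in $\{x\in\textbf{M} : \rho_\beta(x)\neq 0\}\subseteq \mathrm{supp}(\rho_\beta)\subseteq U_\beta$.

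To finish, I would note that non-separability is inherited by open subspaces: since $U_\beta$ is open in $\textbf{M}$, any disjoint $U_\beta$-open sets separating $p$ and $q$ would also be disjoint open sets of $\textbf{M}$ doing so. Thus $p$ and $q$ are distinct, non-separable points of $U_\beta$, contradicting the hypothesis that $U_\beta$ is Hausdorff. This contradiction shows that no partition of unity subordinate to $\mathcal{U}$ can exist.

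There is little genuine obstacle here; the only points requiring care are bookkeeping ones — reading ``Hausdorff'' as a statement about the subspace topology on the members of $\mathcal{U}$, and using the convention that supports are \emph{contained in}, not merely meet, the cover elements, together with the global normalisation $\sum_\alpha\rho_\alpha=1$. Note also that the argument uses neither smoothness nor local finiteness beyond what is needed for the value $\sum_\alpha\rho_\alpha(p)$ to make sense, so it is insensitive to the precise flavour of partition of unity adopted.
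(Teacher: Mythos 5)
Your argument is correct and follows essentially the same route as the paper, which cites the same key observation: any continuous real-valued function on $\textbf{M}$ must agree on a Hausdorff-inseparable pair, so some $\rho_\beta$ is positive at both points and forces them into a single Hausdorff member of the cover, a contradiction. The only thing you supply beyond the paper's sketch is the (correct) remark that non-separability passes to open subspaces, which cleanly closes the argument.
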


The key observation here is that any continuous function $\textbf{r}:\textbf{M}\rightarrow \mathbb{R}$ will necessarily map Hausdorff-violating points to the same value in $\mathbb{R}$. As such, any proposed partition of unity will either not sum to one everywhere, or will not be supported in the Hausdorff open sets of the proposed cover.\footnote{Note that there are at least \textit{some} open covers that admit partitions of unity -- the singleton $\{\textbf{M} \}$ being a somewhat trivial example.} 


\subsubsection*{Inductive Construction of $\textbf{M}$}
Generally speaking, many of the results that can be proved for binary adjunction spaces will also hold for a colimit of finitely-many manifolds. This is due to a certain inductive construction of colimits: if $\textbf{M}$ a non-Hausdorff manifold built from $(n+1)$-many manifolds $M_i$, then we can equivalently view $\textbf{M}$ as an adjunction of the two spaces $\textbf{N}$ and $M_{n+1}$, where: 
\begin{itemize}
    \item $\textbf{N}$ is the adjunction of the first $n$-many manifolds $M_i$, 
    \item the subset $A = \bigcup_{i\leq n} M_{i(n+1)}$, viewed as an open submanifold of $M_{n+1}$, and 
    \item the gluing map $F: A\rightarrow \textbf{N}$ sends each $a$ to the equivalence class in $\textbf{N}$ containing the point $f_{(n+1) i} (a)$, for all $i \leq n$.
\end{itemize}
The universal properties of colimits, applied to both $\textbf{M}$ and $\textbf{N}$, ensure the equivalence between these two representations of $\textbf{M}$. The details can be found in Section 1.1 of \cite{oconnell2023vector}.

\subsection{Smooth Manifolds}
We will now extend our discussion to the smooth setting. Suppose that we are given a non-Hausdorff manifold $\textbf{M}$ built according to Definition \ref{DEF: Adjunctive System} and Lemma \ref{LEM: fij and Aij open implies phi maps open and M gen manifold}.
Given atlases $\mathcal{A}_i$ of the Hausdorff manifolds $M_i$, we may define a topological atlas $\boldsymbol{\mathcal{A}}$ on $\textbf{M}$ as:  
$$ \mathcal{A} := \bigcup_{i \in I} \big\{ (\phi_i(U_\alpha), \varphi_\alpha \circ \phi_i^{-1}) \ | \ (U_\alpha, \varphi_\alpha) \in \mathcal{A}_i \big\}. $$
The antecedent conditions of Lemma \ref{LEM: fij and Aij open implies phi maps open and M gen manifold} ensure that the transition maps of $\mathcal{A}$ will be continuous for charts coming from different $M_i$. As one might expect, this observation similarly holds for smooth manifolds. The following result is taken from \cite[Lem 1.7]{oconnell2023vector}. 

\begin{lemma}\label{LEM: non-Haus smooth manifold}
Let $\textbf{M}$ be a non-Hausdorff manifold built according to Lemma \ref{LEM: fij and Aij open implies phi maps open and M gen manifold}. If, additionally, the $M_i$ are smooth manifolds and the $f_{ij}$ are all smooth maps, then $\textbf{M}$ admits a smooth atlas.
\end{lemma}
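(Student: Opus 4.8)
The plan is to verify directly that the topological atlas $\mathcal{A}$ displayed above is already a smooth atlas, i.e.\ that all of its transition maps are $C^\infty$ (with $C^\infty$ inverses). Since each $\mathcal{A}_i$ covers $M_i$ and $\textbf{M} = \bigcup_{i\in I}\phi_i(M_i)$, the collection $\mathcal{A}$ covers $\textbf{M}$, and by Lemma~\ref{LEM: fij and Aij open implies phi maps open and M gen manifold} each $\phi_i$ is an open embedding, so every element of $\mathcal{A}$ is a homeomorphism from an open subset of $\textbf{M}$ onto an open subset of some $\mathbb{R}^n$. Hence only chart compatibility needs checking, and there are two cases depending on whether the two charts come from the same $M_i$ or from different ones.

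For two charts $(\phi_i(U_\alpha),\varphi_\alpha\circ\phi_i^{-1})$ and $(\phi_i(U_\beta),\varphi_\beta\circ\phi_i^{-1})$ arising from the \emph{same} index $i$, injectivity of $\phi_i$ makes the transition map literally $\varphi_\beta\circ\varphi_\alpha^{-1}$ on the relevant open subset of $\mathbb{R}^n$, which is smooth because $\mathcal{A}_i$ is a smooth atlas on $M_i$. The substantive case is a pair $(\phi_i(U_\alpha),\varphi_\alpha\circ\phi_i^{-1})$ and $(\phi_j(V_\beta),\psi_\beta\circ\phi_j^{-1})$ with $i\neq j$ whose domains overlap. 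Here I would first pin down the domain of the transition map: since $\phi_i$ is an open embedding, $\phi_i^{-1}\big(\phi_i(U_\alpha)\cap\phi_j(V_\beta)\big)$ is an open subset of $U_\alpha$ contained in $\phi_i^{-1}(\phi_j(M_j))$, and the key identification is $\phi_i^{-1}(\phi_j(M_j)) = M_{ij}$. Indeed, $x\in M_i$ satisfies $[x,i]\in\phi_j(M_j)$ iff $[x,i]=[y,j]$ for some $y\in M_j$, which by definition of $\sim$ means $y=f_{ij}(x)$, i.e.\ $x\in M_{ij}$. So the transition map is defined on (the $\varphi_\alpha$-image of) an open subset of $M_{ij}$, exactly where $f_{ij}$ lives.

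On that domain the computation is immediate: for $x\in M_{ij}$ we have $\phi_i(x)=[x,i]=[f_{ij}(x),j]=\phi_j(f_{ij}(x))$, so $\phi_j^{-1}\circ\phi_i = f_{ij}$ there, and therefore
$$ (\psi_\beta\circ\phi_j^{-1})\circ(\varphi_\alpha\circ\phi_i^{-1})^{-1} \;=\; \psi_\beta\circ\big(\phi_j^{-1}\circ\phi_i\big)\circ\varphi_\alpha^{-1} \;=\; \psi_\beta\circ f_{ij}\circ\varphi_\alpha^{-1}, $$
a composite of the smooth maps $\varphi_\alpha^{-1}$, $f_{ij}$ (smooth by hypothesis) and $\psi_\beta$, hence smooth; its inverse $\varphi_\alpha\circ f_{ji}\circ\psi_\beta^{-1}$ is smooth for the same reason, since $f_{ji}=f_{ij}^{-1}$ is also one of the maps in $\textsf{f}$. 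Together the two cases show $\mathcal{A}$ is a smooth atlas.

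I do not expect a genuine obstacle — the statement is largely bookkeeping — but the one step deserving care is the identification $\phi_i^{-1}(\phi_j(M_j)) = M_{ij}$, equivalently the observation that the overlap of two ``cross'' chart domains sits inside the domain $M_{ij}$ of the gluing map, so that $\psi_\beta\circ f_{ij}\circ\varphi_\alpha^{-1}$ even makes sense. This is exactly the place where the axioms of Definition~\ref{DEF: Adjunctive System} and the open-embedding hypothesis of Lemma~\ref{LEM: fij and Aij open implies phi maps open and M gen manifold} are used; everything else reduces to smoothness of the $\mathcal{A}_i$ and of the $f_{ij}$.
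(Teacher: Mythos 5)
Your verification is correct and follows exactly the route the paper takes (deferring to \cite[Lem.~1.7]{oconnell2023vector}): one checks that the topological atlas $\mathcal{A}$ is already smooth, with the only substantive case being cross-charts from $M_i$ and $M_j$, where the identity $\phi_j^{-1}\circ\phi_i = f_{ij}$ on $M_{ij}$ reduces the transition map to $\psi_\beta\circ f_{ij}\circ\varphi_\alpha^{-1}$, smooth by hypothesis, with inverse handled by $f_{ji}=f_{ij}^{-1}$. Your identification $\phi_i^{-1}(\phi_j(M_j))=M_{ij}$ is the right place to be careful, and your argument for it is sound.
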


The atlas $\mathcal{A}$ described above will suffice as a smooth atlas of $\textbf{M}$. Under this reading, we may view the canonical maps $\phi_i : M_i \rightarrow \textbf{M}$ as smooth open embeddings. Moreover, arguments similar to that of \cite[Lem. 1.3]{oconnell2023non} ensure that $\textbf{M}$ satisfies a universal property, effectively turning it into the colimit of the diagram $\mathcal{F}$ in the category of smooth locally-Euclidean, second-countable spaces. \\

Smoothness of maps between non-Hausdorff manifolds can be defined as in the Hausdorff case, that is, by appealing to local coordinate representations about each point in the manifold. Since the submanifolds $M_i$ form an open cover of $\textbf{M}$, we see that any map $\boldsymbol{\psi}: \textbf{M}\rightarrow \textbf{N}$ is smooth if and only if all of the maps $\boldsymbol{\psi}\circ \phi_i: M_i \rightarrow \textbf{N}$ are.

\begin{remark}\label{REM: assumptions in this paper}
In addition to the conditions of Lemma \ref{LEM: fij and Aij open implies phi maps open and M gen manifold} and Theorem \ref{THM: Summary of topological properties}, hereafter we will also assume that our non-Hausdorff manifolds are smooth in the sense of Lemma \ref{LEM: non-Haus smooth manifold}.  Moreover, we will also need to assume that the gluing regions $M_{ij}$ have \textit{diffeomorphic boundaries}, which in this context means that the closures $Cl^{M_i}(M_{ij})$ are all smooth closed submanifolds with boundary, and the extended maps $\overline{f_{ij}}$ of Theorem \ref{THM: Summary of topological properties} are all diffeomorphisms from $Cl^{M_i}(M_{ij})$ to $Cl^{M_j}(M_{ji})$. In order to remove any potential ambiguity between the topological and manifold notion of boundary, we will also assume that the submanifolds $M_{ij}$ are all regular domains in the sense of \cite[Prop 5.46]{lee2013smooth}
\end{remark}

Smooth functions on a non-Hausdorff manifold $\textbf{M}$ can be defined in the same way as the Hausdorff case. In particular, the space $C^\infty(\textbf{M})$ of smooth functions is still a unital, associative algebra over $\mathbb{R}$. In our context, the canonical maps $\phi_i: M_i \rightarrow \textbf{M}$ are all smooth, so we might expect there to be some relationship between $C^\infty(\textbf{M})$ the algebras $C^\infty(M_i)$. As a matter of fact there is such a relationship, however in order to motivate the exact description we first need to make the following observation regarding the construction of functions on $\textbf{M}$, taken from \cite[§1.3]{oconnell2023vector}. 

\begin{lemma}\label{LEM: functions on M}
    Let $\textbf{M}$ be a smooth non-Hausdorff manifold satisfying the criteria of Remark \ref{REM: assumptions in this paper}. Then:
    \begin{enumerate}
        \item  If $r_i: M_i \rightarrow \mathbb{R}$ is a collection of smooth functions such that $r_i = r_j\circ f_{ij}$ for all $i,j$ in $I$, then the map $\textbf{r}: \textbf{M}\rightarrow \mathbb{R}$ defined by $\textbf{r}([x,i]) = r_i(x)$ is a smooth function on $\textbf{M}$.
        \item Any smooth function $r_i$ on $M_i$ can be extended to a smooth function $\textbf{r}$ on $\textbf{M}$.
    \end{enumerate}
\end{lemma}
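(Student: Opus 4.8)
The plan is to prove both parts by appealing to the colimit/universal property of $\textbf{M}$ together with Lemma \ref{LEM: open subsets in M} and the characterisation of smoothness in terms of the open cover $\{M_i\}$. For part (1), I would first observe that the compatibility condition $r_i = r_j \circ f_{ij}$ is precisely what is needed for the map $\textbf{r}([x,i]) := r_i(x)$ to be well-defined on equivalence classes: if $(x,i)\sim(y,j)$ then $y = f_{ij}(x)$, whence $r_j(y) = r_j(f_{ij}(x)) = r_i(x)$, so the value does not depend on the chosen representative. (One should also note that A1 and A2 make the condition symmetric and reflexive, and A3 makes it transitive, so the family $\{r_i\}$ really does descend.) Equivalently, and more cleanly, the $\{r_i\}$ form a cocone over the diagram $\mathcal{F}$ with vertex $\mathbb{R}$, and the universal property of the colimit $\textbf{M}$ furnishes the unique map $\textbf{r}:\textbf{M}\to\mathbb{R}$ with $\textbf{r}\circ\phi_i = r_i$.

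Next I would check smoothness of $\textbf{r}$. Since the $\phi_i$ are smooth open embeddings and their images cover $\textbf{M}$, the remark following Lemma \ref{LEM: non-Haus smooth manifold} (applied with $\textbf{N} = \mathbb{R}$) says $\textbf{r}$ is smooth iff each $\textbf{r}\circ\phi_i : M_i \to \mathbb{R}$ is smooth; but $\textbf{r}\circ\phi_i = r_i$, which is smooth by hypothesis. Alternatively one can argue directly with the atlas $\mathcal{A}$: a chart of $\textbf{M}$ has the form $(\phi_i(U_\alpha), \varphi_\alpha\circ\phi_i^{-1})$, and the local representative of $\textbf{r}$ in this chart is $r_i\circ\varphi_\alpha^{-1}$, which is smooth. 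Continuity of $\textbf{r}$ is automatic from smoothness, but if one wants it separately it follows from Lemma \ref{LEM: open subsets in M}: $\phi_i^{-1}(\textbf{r}^{-1}(V)) = r_i^{-1}(V)$ is open for every open $V\subseteq\mathbb{R}$.

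For part (2), the idea is to extend a single $r_i : M_i \to \mathbb{R}$ to a compatible family $\{s_k\}_{k\in I}$ and then invoke part (1). The natural candidate is to push $r_i$ forward along the gluing maps wherever possible and set it to zero elsewhere: for each $k$, define $s_k$ on $M_{ki} = f_{ik}(M_{ik})$ by $s_k := r_i\circ f_{ik}^{-1} = r_i \circ f_{ki}$, and extend by $0$ on the complement of $\overline{M_{ki}}$. The main obstacle — and the only real content of part (2) — is smoothness of $s_k$ across the boundary $\partial^{M_k}(M_{ki})$: the function $r_i\circ f_{ki}$ and the zero function must glue smoothly along this hypersurface. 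This is where the hypotheses of Remark \ref{REM: assumptions in this paper} are essential: $Cl^{M_i}(M_{ik})$ is a smooth regular domain with diffeomorphic boundary, so $\partial^{M_k}(M_{ki})$ is an embedded hypersurface, and one can use a collar/boundary-chart argument to multiply $r_i\circ f_{ki}$ by a smooth bump function that is $1$ on a neighbourhood of $\overline{M_{ki}}\setminus(\text{a collar})$ and supported inside $M_{ki}$ — this works because each individual $M_k$ \emph{is} Hausdorff and hence admits the usual bump functions. The resulting modified family $\{s_k\}$ still needs to satisfy $s_k = s_\ell\circ f_{k\ell}$; I would verify this by a case analysis on whether points lie in the various $M_{k\ell}\cap M_{ki}$, using the cocycle condition A3 to match the pushforwards of $r_i$, and noting that away from these overlaps both sides are eventually $0$. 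Applying part (1) to $\{s_k\}$ then produces the desired global extension $\textbf{r}$ with $\textbf{r}\circ\phi_i = r_i$, i.e. $\textbf{r}$ restricts to $r_i$ on $M_i$. The bump-function gluing at the Hausdorff-violating boundary is the step I expect to require the most care, since it is exactly the place where the non-Hausdorff pathology could in principle obstruct smoothness, and it is the reason the boundary-regularity assumptions of Remark \ref{REM: assumptions in this paper} were imposed.
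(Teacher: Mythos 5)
Part (1) is fine and follows the paper's route (well-definedness from the compatibility condition, smoothness checked on the open cover $\{M_i\}$ via the gluing lemma). The problem is in part (2), specifically in where you place the cutoff. You propose to set $s_k = r_i\circ f_{ki}$ on $M_{ki}$ and then restore smoothness at $\partial^{M_k}(M_{ki})$ by multiplying by a bump function that is $1$ only on $\overline{M_{ki}}$ minus a collar and is \emph{supported inside} $M_{ki}$. That bump necessarily differs from $1$ somewhere in the collar region of $M_{ki}$, so the resulting $s_k$ no longer equals $r_i\circ f_{ki}$ on all of $M_{ki}$. This destroys exactly the compatibility condition $s_k = s_i\circ f_{ki}$ that part (1) requires, and in fact the prescription $\textbf{r}([x,i])=s_i(x)$ is then not even well-defined on the quotient: a point of $M_{ik}$ in the collar is identified with its image in $M_{ki}$, but the two candidate values $r_i(x)$ and $\psi(f_{ik}(x))\,r_i(x)$ disagree. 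So as written the construction does not produce an extension of $r_i$.

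The correct move (and the one the paper sketches, via Lemma 2.26 of Lee) is to make the taper happen \emph{outside} $\overline{M_{ki}}$, not inside it. Because $\overline{M_{ki}}$ is a regular domain and $\overline{f_{ik}}$ is a diffeomorphism of manifolds with boundary (Remark \ref{REM: assumptions in this paper}), the function $r_i\circ\overline{f_{ik}}$ is smooth on the closed set $\overline{M_{ki}}$ in the sense that it admits a smooth extension to an open neighbourhood of each boundary point; patching these with a partition of unity in the Hausdorff manifold $M_k$ yields a smooth extension to an open set $U\supseteq\overline{M_{ki}}$. Now choose a bump function that is identically $1$ on all of $\overline{M_{ki}}$ and supported in $U$, and extend by zero outside $U$. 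The resulting $s_k$ agrees with $r_i\circ\overline{f_{ik}}$ on the whole of $\overline{M_{ki}}$ (boundary included) and is smooth on $M_k$; this is what makes the family compatible and lets part (1) apply. Note also that this is the true role of the boundary-regularity hypotheses: they guarantee the local smooth extension of $r_i\circ\overline{f_{ik}}$ across $\partial^{M_k}(M_{ki})$, not the existence of your interior cutoff. (Your remark that the pairwise compatibility of the $s_k$ for $|I|>2$ needs a case analysis with A3 is a fair flag; the paper handles this by performing the extensions inductively, and a careful write-up would need to do the same, since two independently chosen tapers on an overlap $M_{k\ell}\setminus\overline{M_{ki}}$ need not agree.)
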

Item (1) above is a direct reformulation of the so-called ``Gluing Lemma" for smooth maps, (cf \cite[Cor. 2.8]{lee2013smooth}). The second item above revolves around the so-called ``extension by zero" construction for Hausdorff manifolds (cf. \cite[Lem. 2.26]{lee2013smooth}). The idea behind the proof is to transfer the smooth function $r_i$ to each closed subset $\overline{M_{ji}}$ using the extended diffeomorphisms $\overline{f_{ij}}$, and then to inductively extend by zero into each $M_j$. This creates a collection of smooth functions defined on each $M_j$ that agree on their pairwise intersections, and the result then follows from Item (1) above. The results of Lemma \ref{LEM: functions on M} may be used to prove the following \cite[§1.3]{oconnell2023vector}. 

\begin{theorem}\label{THM: fibred product of smooth functions}
The algebra $C^{\infty}(\textbf{M})$ is isomorphic to the fibred product $$ \prod_{\mathcal{F}} C^\infty(M_i) :=\big\{ (r_1, ..., r_n) \in \bigoplus_{i\in I}  C^\infty(M_i) \big{|} \ r_i = r_j \circ f_{ij} \ on \ M_{ij} \ \text{for all } i,j\in I \big\}.      $$
\end{theorem}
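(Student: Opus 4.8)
The plan is to exhibit the isomorphism concretely as the ``restriction along the canonical embeddings'' map. Define $\Phi : C^{\infty}(\textbf{M}) \to \bigoplus_{i\in I} C^\infty(M_i)$ by $\Phi(\textbf{r}) = (\textbf{r}\circ\phi_1, \dots, \textbf{r}\circ\phi_n)$; each component is smooth since the $\phi_i$ are smooth open embeddings. First I would check that $\Phi$ actually lands in the fibred product $\prod_{\mathcal{F}} C^\infty(M_i)$: for $x \in M_{ij}$ the defining relation of $\textbf{M}$ gives $\phi_i(x) = [x,i] = [f_{ij}(x), j] = \phi_j(f_{ij}(x))$, so $(\textbf{r}\circ\phi_i)(x) = (\textbf{r}\circ\phi_j)(f_{ij}(x))$, which is precisely the compatibility condition $r_i = r_j\circ f_{ij}$ on $M_{ij}$. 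Checking that $\Phi$ is a homomorphism of unital $\mathbb{R}$-algebras is routine, since the algebra operations on $C^\infty(\textbf{M})$, on each $C^\infty(M_i)$, and on the fibred product are all defined pointwise and pullback respects them; one should also note in passing that $\prod_{\mathcal{F}} C^\infty(M_i)$ is genuinely a subalgebra of $\bigoplus_{i} C^\infty(M_i)$, because the compatibility conditions are preserved under sums, products, and scalar multiples.

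It then remains to show that $\Phi$ is bijective. For injectivity: if $\textbf{r}\circ\phi_i = 0$ for every $i$, then since every point of $\textbf{M}$ has the form $[x,i] = \phi_i(x)$ we get $\textbf{r} \equiv 0$. For surjectivity: given $(r_1,\dots,r_n) \in \prod_{\mathcal{F}} C^\infty(M_i)$, Lemma \ref{LEM: functions on M}(1) produces a well-defined smooth function $\textbf{r} \in C^\infty(\textbf{M})$ with $\textbf{r}([x,i]) = r_i(x)$, and by construction $\textbf{r}\circ\phi_i = r_i$, so $\Phi(\textbf{r}) = (r_1,\dots,r_n)$. The assignment $(r_1,\dots,r_n)\mapsto \textbf{r}$ is then seen to be a two-sided inverse of $\Phi$ by evaluating both composites at an arbitrary point $[x,i]$, so $\Phi$ is an algebra isomorphism.

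I do not anticipate a serious obstacle here: the technical heart of the statement --- that a pointwise-compatible family of smooth functions on the $M_i$ glues to a globally smooth function on $\textbf{M}$ --- is exactly Lemma \ref{LEM: functions on M}(1), which we are free to assume. The only point that genuinely needs care is the verification that the restriction map $\Phi$ is compatible with the fibred-product structure, i.e.\ translating the equivalence relation defining $\textbf{M}$ into the algebraic gluing condition $r_i = r_j \circ f_{ij}$; everything else is bookkeeping. One could alternatively phrase the entire argument through the universal property of $\textbf{M}$ as a colimit in the category of smooth, second-countable, locally-Euclidean spaces, applied to maps into $\mathbb{R}$ --- a cone over the diagram $\mathcal{F}$ with vertex $\mathbb{R}$ is precisely an element of the fibred product --- but the explicit construction above is more transparent and makes the algebra structure immediate.
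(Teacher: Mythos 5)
Your proposal is correct and follows essentially the same route the paper indicates: the isomorphism is the restriction map $\textbf{r}\mapsto(\textbf{r}\circ\phi_1,\dots,\textbf{r}\circ\phi_n)$, with injectivity coming from the fact that the $M_i$ cover $\textbf{M}$ and surjectivity supplied by the gluing statement in Lemma \ref{LEM: functions on M}(1). The paper defers the details to \cite[\S 1.3]{oconnell2023vector} but explicitly names Lemma \ref{LEM: functions on M} as the engine of the argument, which is exactly how you have used it.
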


This fibred-product structure has many useful consequences. Abstractly, it allows us to see $C^\infty(\textbf{M})$ as the limit of the diagram formed by applying the contravariant functor $C^\infty(\cdot)$ to the data of $\mathcal{F}$. Concretely, Theorem \ref{THM: fibred product of smooth functions} allows us to describe a smooth function $\textbf{r}$ on $\textbf{M}$ by any of the following equivalent descriptions: 
$$   \textbf{r} = (\phi_1^* \textbf{r}, \cdots , \phi_n^* \textbf{r}) = (\textbf{r}\circ \phi_1, \cdots , \textbf{r}\circ \phi_n) = (\textbf{r}|_{M_1}, \cdots , \textbf{r}|_{M_n}) = (r_1 ,\cdots , r_n).       $$
The above fibre product can be equivalently represented by a sequence. In the case of a binary adjunction space, we will have the following short sequence: 

$$  0 \rightarrow C^\infty(\textbf{M}) \xrightarrow{\ \ \Phi^* \ \ } C^\infty(M_1)\oplus C^\infty(M_2) \xrightarrow{ \ \iota_{12}^* - f_{12}^* \ } C^\infty(M_{12}) \rightarrow 0            $$
where here $\Phi^*$ is the map that sends any function $\textbf{r}$ to the pair $(\phi_1^*\textbf{r}, \phi_2^*\textbf{r})$, and $\iota_{12}:M_{12} \rightarrow M_1$ is the inclusion map. Theorem \ref{THM: fibred product of smooth functions} then states that the above sequence is exact at the first step, that is, $C^\infty(\textbf{M}) \cong \ker{(\iota_{12}^* - f_{12}^*)}$. However, due to Lemma \ref{LEM: no partitions of unity} there is no partition of unity subordinate to the open cover $\{M_1, M_2\}$, so in general we may not argue for the full exactness of this sequence by standard means, found in Prop. 2.3. of \cite{bott1982differential}, say. This is an important issue, the resolution of which will be the subject of Section 3.

\subsection{Vector Bundles}
Vector bundles fibred over non-Hausdorff manifolds $\textbf{M}$ can be constructed using colimits of Hausdorff bundles $E_i \xrightarrow{ \ \pi_i \ } M_i$. The construction is very similar to that of Section 1.1, except that we additionally require some bundle maps $F_{ij}$ covering the $f_{ij}$ that specify the gluing of each fibre. Schematically, the colimit $\textbf{E}$ of the bundles $E_i$ should make the diagram:

\begin{center}
    \begin{tikzcd}[row sep=1.9em, column sep=1.8em]
 & E_{ij} \arrow[ld, "F_{ij}"'] \arrow[dd, "\pi_{ij}"', near start] \arrow[rr, "I_{ij}"] &  & E_i \arrow[dd, "\pi_i"] \arrow[ld, "\chi_i"', dashed] \\
E_j \arrow[rr, "\chi_j", near end, crossing over, dashed] \arrow[dd, "\pi_j"'] &  & \textbf{E}  &  \\
 & M_{ij} \arrow[ld, "f_{ij}"', near start] \arrow[rr, "\iota_{ij}", near start] &  & M_i \arrow[ld, "\phi_i"] \\
M_j \arrow[rr, "\phi_j"'] &  & \textbf{M} \arrow[from=uu, "\boldsymbol{\pi}", dashed, near start, crossing over] & 
\end{tikzcd}
\end{center}

\noindent commute for all $i, j$ in $I$. This can all be confirmed formally, the details of which can be found in \cite[§2]{oconnell2023vector}. For the purposes of this paper, the relevant details are given below. 

\begin{theorem}\label{THM: colimit of bundles} 
Let $\mathcal{G}:= (\textsf{E}, \textsf{B}, \textsf{F})$ be a triple of sets in which: 
\begin{enumerate}
    \item $\textsf{E} = \{ (E_i, \pi_i, M_i) \}_{i \in I}$ is a collection of rank-$k$ vector bundles,
    \item $\textsf{B} = \{ E_{ij} \}_{i,j \in I}$ consists of the restrictions of the bundles $E_i$ to the intersections $M_{ij}$, and
    \item $\textsf{F} = \{F_{ij} \}_{i,j \in I}$ is a collection of bundle isomorphisms $F_{ij}: E_{ij} \rightarrow E_{ji}$ that cover the gluing maps $f_{ij}$ and satisfy the condition $F_{ik} = F_{jk}\circ F_{ij}$ on the intersections $M_{ijk}$, for all $i,j,k$ in $I$.
\end{enumerate}
Then the resulting adjunction space $\textbf{E}$ has the structure of a non-Hausdorff rank-$k$ vector bundle over $\textbf{M}$ in which the canonical inclusions $\chi_i: E_i \rightarrow \textbf{E}$ are bundle morphisms covering the canonical embeddings $\phi_i:M_i \rightarrow \textbf{M}$.
\end{theorem}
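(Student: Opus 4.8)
The plan is to assemble the bundle $\textbf{E}$ as an adjunction space in exactly the sense of Section~1.1, applied to the total spaces $E_i$ rather than the bases $M_i$, and then check the extra fibrewise structure. First I would observe that the triple $(\textsf{E},\textsf{B},\textsf{F})$ gives rise to an adjunction system in the sense of Definition~\ref{DEF: Adjunctive System}: the $E_i$ are Hausdorff smooth manifolds (being total spaces of vector bundles over Hausdorff manifolds), the $E_{ij}$ are open submanifolds with $E_{ij}\subseteq E_i$ since the $M_{ij}$ are open in $M_i$, and the $F_{ij}$ play the role of the gluing maps. Conditions \textbf{A1)}--\textbf{A3)} follow: \textbf{A1)} is immediate from $E_{ii}=E_i$ and $F_{ii}=\mathrm{id}$ (which we may assume, or deduce from the bundle-isomorphism hypotheses together with $f_{ii}=\mathrm{id}$); \textbf{A2)} follows because each $F_{ij}$ is a bundle isomorphism onto $E_{ji}$ covering $f_{ij}$, so $F_{ij}^{-1}=F_{ji}$ is forced by the corresponding statement for the $f_{ij}$; and \textbf{A3)}, i.e.\ $F_{ik}=F_{jk}\circ F_{ij}$ on the appropriate intersection, is hypothesis~(3) of the theorem. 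Moreover each $F_{ij}$ is an open topological embedding (indeed a diffeomorphism onto an open submanifold), being a bundle isomorphism covering the open embedding $f_{ij}$, so Lemmas~\ref{LEM: fij and Aij open implies phi maps open and M gen manifold} and \ref{LEM: non-Haus smooth manifold} apply: $\textbf{E}$ is a locally-Euclidean, second-countable, smooth non-Hausdorff manifold, and the canonical maps $\chi_i:E_i\to\textbf{E}$ are smooth open embeddings.

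Next I would construct the projection $\boldsymbol{\pi}:\textbf{E}\to\textbf{M}$ and verify it is well defined, continuous, and smooth. The maps $\phi_i\circ\pi_i:E_i\to\textbf{M}$ are compatible with the gluings: for $e\in E_{ij}$ we have $\phi_j\circ\pi_j\circ F_{ij}(e)=\phi_j\circ f_{ij}\circ\pi_{ij}(e)=\phi_i\circ\pi_i(e)$, the middle equality because $F_{ij}$ covers $f_{ij}$ and the last because $\phi_j\circ f_{ij}=\phi_i$ on $M_{ij}$. Hence by the universal property of the colimit $\textbf{E}$ (equivalently, by the Gluing Lemma, since the $\chi_i(E_i)$ form an open cover and $\chi_i$ are embeddings) there is a unique smooth map $\boldsymbol{\pi}:\textbf{E}\to\textbf{M}$ with $\boldsymbol{\pi}\circ\chi_i=\phi_i\circ\pi_i$, which makes the stated diagram commute. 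Applying the analogous construction to the zero sections $z_i:M_i\to E_i$ (which satisfy $F_{ij}\circ z_i=z_j\circ f_{ij}$ because the $F_{ij}$ are fibrewise linear) produces a global smooth zero section $\textbf{z}:\textbf{M}\to\textbf{E}$ with $\boldsymbol{\pi}\circ\textbf{z}=\mathrm{id}_{\textbf{M}}$.

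Then I would equip each fibre $\boldsymbol{\pi}^{-1}([x,i])$ with a vector space structure and produce local trivialisations. For the linear structure: given $[x,i]\in\textbf{M}$, the embedding $\chi_i$ restricts to a bijection $\pi_i^{-1}(x)\to\boldsymbol{\pi}^{-1}([x,i])$, and we transport the vector space structure of the Hausdorff fibre $\pi_i^{-1}(x)$ across it; this is independent of the chart $i$ chosen, because whenever $[x,i]=[x',j]$ with $x'=f_{ij}(x)$, the isomorphism $F_{ij}:\pi_i^{-1}(x)\to\pi_j^{-1}(x')$ is linear and intertwines the two identifications via $\chi_j\circ F_{ij}=\chi_i$ on $E_{ij}$. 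For local triviality: fix $[x_0,i]\in\textbf{M}$; since $M_i$ is open in $\textbf{M}$ and contains this point, and $E_i\to M_i$ is a rank-$k$ bundle, choose a trivialising neighbourhood $U\subseteq M_i$ of $x_0$ with $\Psi_i:\pi_i^{-1}(U)\xrightarrow{\sim}U\times\mathbb{R}^k$; then $\phi_i(U)$ is open in $\textbf{M}$, $\boldsymbol{\pi}^{-1}(\phi_i(U))=\chi_i(\pi_i^{-1}(U))$ because $\chi_i$ is an open embedding and the fibres match, and $\Psi_i\circ\chi_i^{-1}$ composed with $\phi_i^{-1}\times\mathrm{id}$ gives a smooth fibrewise-linear diffeomorphism $\boldsymbol{\pi}^{-1}(\phi_i(U))\xrightarrow{\sim}\phi_i(U)\times\mathbb{R}^k$. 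Finally, the computations $\boldsymbol{\pi}\circ\chi_i=\phi_i\circ\pi_i$ and fibrewise linearity of $\chi_i$ (immediate from how we defined the fibre structure) show that the $\chi_i$ are bundle morphisms over the $\phi_i$, completing the proof.

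The main obstacle I anticipate is the verification of \emph{local triviality} around Hausdorff-violating points of $\textbf{M}$, where one must be careful that the identification $\boldsymbol{\pi}^{-1}(\phi_i(U))=\chi_i(\pi_i^{-1}(U))$ genuinely holds --- this is where the fact that the $\chi_i$ are \emph{open} embeddings (Lemma~\ref{LEM: fij and Aij open implies phi maps open and M gen manifold}) and that $M_i$ is an open subset of $\textbf{M}$ is essential, so that a trivialising chart drawn from a single $E_i$ already trivialises $\textbf{E}$ over an $\textbf{M}$-open set, with no need to patch across different $i$'s (which would require partitions of unity that Lemma~\ref{LEM: no partitions of unity} denies us). The rest is bookkeeping: the well-definedness of the fibre structure and of $\boldsymbol{\pi}$, which both reduce via the universal property of the colimit to the cocycle-type compatibility condition $F_{ik}=F_{jk}\circ F_{ij}$ already built into hypothesis~(3).
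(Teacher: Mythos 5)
Your proposal is correct and is essentially the construction the paper has in mind: the paper itself offers no proof here, deferring to \cite[\S 2]{oconnell2023vector}, but its commuting-cube diagram is precisely your argument (adjunction system on the total spaces, projection via the universal property, fibrewise transport of the linear structure, and local triviality read off from a single chart $\chi_i(\pi_i^{-1}(U))$ since the $\chi_i$ are open embeddings). The only loose end you flag, $F_{ii}=\mathrm{id}$, does follow from the hypotheses: taking $i=j=k$ in the cocycle condition gives $F_{ii}=F_{ii}\circ F_{ii}$, and $F_{ii}$ is an isomorphism, which likewise forces $F_{ji}=F_{ij}^{-1}$ via $k=i$.
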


The bundle $\textbf{E}$ described above can be shown to satisfy a certain universal property, effectively turning it into a colimit of the bundles $E_i$. Moreover, it can be shown that every bundle $\textbf{E}\xrightarrow{\boldsymbol{\pi}} \textbf{M}$ is a colimit of Hausdorff bundles (cf \cite[§2.2]{oconnell2023vector}). As an application of this idea, we may readily obtain descriptions of the tensor bundles over the non-Hausdorff manifold $\textbf{M}$. The tangent bundle $T\textbf{M}$ can be constructed as a colimit of the bundles $TM_i$, where here the gluing maps for the fibres are given by the differentials of the gluing maps for $\textbf{M}$, i.e. $F_{ij}=df_{ij}$. According to Remark \ref{REM: assumptions in this paper} each $f_{ij}$ is a diffeomorphism, so their differentials will induce bundle isomorphisms on the $TM_{ij}$. The universal property of the colimit bundle can be applied (together with the maps $d\phi_i: TM_i \rightarrow T\textbf{M}$) to conclude that $T\textbf{M}$ is canonically isomorphic to a colimit of the bundles $TM_i$. According to Theorem \ref{THM: Summary of topological properties}, a pair of tangent vectors $[v,i]$ and $[w,j]$ in $T\textbf{M}$ will violate the Hausdorff property if and only if they satisfy $$ v \in T_p M_i \ \text{and} \ w\in T_{\overline{f_{ij}}(p)}M_j \ \text{and} \  d\overline{f_{ij}}(v) = w $$ for some point $p$ lying on the $M_i$-relative boundary of $M_{ij}$. Similarly, it can be shown that any higher tensor bundle $T^{(p,q)}\textbf{M}$ is canonically isomorphic to a colimit of the same bundles $T^{(p,q)}M_i$ defined on each $M_i$. 
\begin{theorem}\label{THM: tensor bundles canonical form}
    Let $\textbf{M}$ be a non-Hausdorff manifold satisfying the criteria of Remark \ref{REM: assumptions in this paper}. Then the rank $(p,q)$ tensor bundle $T^{(p,q)}\textbf{M}$ is canonically isomorphic to an adjunction of the bundles $T^{(p,q)}M_i$, glued along the maps $$F_{ij} := \underbrace{df_{ij} \otimes \cdots \otimes df_{ij}}_{p \ \text{times}}\otimes\underbrace{df_{ji} \otimes \cdots \otimes df_{ji}}_{q \ \text{times}} .  $$
\end{theorem}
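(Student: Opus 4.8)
The plan is to invoke the colimit description of vector bundles from Theorem \ref{THM: colimit of bundles} and verify that the data $(T^{(p,q)}M_i, \pi_i, M_i)$ together with the maps $F_{ij}$ defined in the statement constitute a valid adjunction system of bundles in the sense of that theorem; the canonical isomorphism will then drop out of the universal property. First I would recall that $T\textbf{M}$ is, by construction, the colimit of the $TM_i$ glued along $df_{ij}$, and that this is exactly the content of the remarks immediately preceding the statement. The tensor bundle $T^{(p,q)}\textbf{M}$ is built pointwise from $T\textbf{M}$ and its dual by the usual functorial constructions $\otimes$ and $(\cdot)^*$; the key point is that these constructions commute with the colimit, so I would set up the argument by checking the gluing data for $T^{(p,q)}$ directly rather than trying to push the colimit through an abstract functor.

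Concretely, I would proceed as follows. Step one: observe that the restriction of $T^{(p,q)}M_i$ to $M_{ij}$ is canonically $T^{(p,q)}M_{ij}$, so the family $\textsf{B}$ required by Theorem \ref{THM: colimit of bundles} is well-defined. Step two: since each $f_{ij}:M_{ij}\to M_{ji}$ is a diffeomorphism (Remark \ref{REM: assumptions in this paper}), the differential $df_{ij}:TM_{ij}\to TM_{ji}$ is a bundle isomorphism covering $f_{ij}$, and $df_{ji}=(df_{ij})^{-1}$ by condition \textbf{A2}. Hence the map
$$ F_{ij} = \underbrace{df_{ij}\otimes\cdots\otimes df_{ij}}_{p}\otimes\underbrace{df_{ji}\otimes\cdots\otimes df_{ji}}_{q} $$
is a well-defined bundle isomorphism $T^{(p,q)}M_{ij}\to T^{(p,q)}M_{ji}$ covering $f_{ij}$: the contravariant (cotangent) slots take $df_{ji}$ because a covector pulls back, and one checks on a fibre over $p\in M_{ij}$ that $F_{ij}$ is precisely the natural map on tensors induced by the linear isomorphism $df_{ij}|_p$. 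Step three: verify the cocycle condition $F_{ik}=F_{jk}\circ F_{ij}$ on $M_{ijk}$. This reduces, slot by slot, to the two identities $df_{ik}=df_{jk}\circ df_{ij}$ and $df_{ki}=df_{kj}\circ df_{ji}$ on the relevant overlaps, both of which follow by differentiating condition \textbf{A3} (and its \textbf{A2}-dual); since the tensor product of maps respects composition, $\bigl(\bigotimes df_{jk}\otimes\bigotimes df_{kj}\bigr)\circ\bigl(\bigotimes df_{ij}\otimes\bigotimes df_{ji}\bigr) = \bigotimes df_{ik}\otimes\bigotimes df_{ki}$. Step four: apply Theorem \ref{THM: colimit of bundles} to conclude that the adjunction space of the $T^{(p,q)}M_i$ along the $F_{ij}$ is a non-Hausdorff rank-$\binom{n}{?}$ bundle over $\textbf{M}$ with canonical inclusions covering the $\phi_i$. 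Step five: produce the isomorphism with $T^{(p,q)}\textbf{M}$ itself. For this I would use the universal property of the colimit bundle: the bundle maps $d^{(p,q)}\phi_i:T^{(p,q)}M_i\to T^{(p,q)}\textbf{M}$ (the natural lift of $d\phi_i$ to tensors) are compatible with the gluing maps $F_{ij}$ — this compatibility is again just $d^{(p,q)}\phi_i = d^{(p,q)}\phi_j\circ F_{ij}$ on $M_{ij}$, which follows from $d\phi_i = d\phi_j\circ df_{ij}$ after applying the tensor functor — so they factor through a canonical bundle morphism from the colimit to $T^{(p,q)}\textbf{M}$. That this morphism is an isomorphism can be checked fibrewise over each $[p,i]$, where both sides are identified with $T^{(p,q)}_p M_i$ via the embedding $\phi_i$.

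The main obstacle, or at least the only place requiring genuine care, is bookkeeping the variance of the tensor slots and making sure the gluing map for the dual bundle $T^*\textbf{M}$ really is $df_{ji}=(df_{ij}^{-1})$ rather than the transpose-inverse in disguise; here one benefits from $f_{ij}$ being a diffeomorphism, so $(df_{ij})^{-*}=(df_{ij}^{-1})^* $ interpreted correctly equals the pushforward of covectors along $f_{ij}$, which by \textbf{A2} is the operation induced by $df_{ji}$. A secondary, purely expository point is that the Hausdorff-violation description for $T^{(p,q)}\textbf{M}$ — a pair of tensors $[\xi,i],[\eta,j]$ is non-Hausdorff iff $\xi\in T^{(p,q)}_pM_i$, $\eta\in T^{(p,q)}_{\overline{f_{ij}}(p)}M_j$, and $F_{ij}$ extended over the boundary sends $\xi$ to $\eta$ — follows immediately by combining the isomorphism just established with Theorem \ref{THM: Summary of topological properties} applied to the bundle adjunction, exactly as was done for $T\textbf{M}$ in the paragraph preceding the statement; I would state this as an immediate consequence rather than reprove it. Everything else is a routine diagram chase, so the proof should be short.
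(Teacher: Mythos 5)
Your proposal is correct and follows essentially the same route as the paper, which likewise obtains the result by feeding the data $(T^{(p,q)}M_i,\pi_i,M_i)$ and the tensor-product gluing maps into Theorem \ref{THM: colimit of bundles} and then invoking the universal property against the lifted maps $d^{(p,q)}\phi_i$, exactly as sketched for $T\textbf{M}$ in the preceding paragraph. The only blemish is the placeholder ``rank-$\binom{n}{?}$'': the rank is $d^{p+q}$ for $d=\dim\textbf{M}$.
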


We finish this section with a pair of very useful observations regarding the sections of non-Hausdorff bundles. To begin with, we observe that the sections of a non-Hausdorff colimit bundle $\textbf{E}$ can be described in a similar manner to that of smooth functions. 

\begin{theorem}\label{THM: sections are fibred product}
    Let $\textbf{E}$ be a vector bundle over $\textbf{M}$. Then the space of sections $\Gamma(\textbf{E})$ is isomorphic to the fibred product 
    $$ \prod_{\mathcal{F}} \Gamma(E_i) := \big\{ (s_1, ..., s_n) \in \bigoplus_{i\in I}  \Gamma(E_i) \big{|} \ s_i = s_j \circ f_{ij} \ on \ M_{ij} \ \text{for all } i,j\in I \big\}.  $$
\end{theorem}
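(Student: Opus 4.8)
The plan is to mimic the proof of Theorem~\ref{THM: fibred product of smooth functions}, since a section of $\textbf{E}$ is formally the same kind of object as a smooth function on $\textbf{M}$, only valued in the fibres rather than in $\mathbb{R}$. First I would define the candidate isomorphism $\Psi: \Gamma(\textbf{E}) \rightarrow \prod_{\mathcal{F}} \Gamma(E_i)$ by pulling back: given a global section $\textbf{s}: \textbf{M} \rightarrow \textbf{E}$, set $\Psi(\textbf{s}) = (\chi_1^{-1}\circ \textbf{s}\circ \phi_1, \ldots, \chi_n^{-1}\circ \textbf{s}\circ \phi_n)$, i.e. $s_i$ is the restriction of $\textbf{s}$ to the open submanifold $M_i$, regarded as a section of $E_i$ via the embedding $\chi_i$. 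One must check this lands in the fibred product: since $\chi_i \circ I_{ij} = \chi_j \circ F_{ij}$ and $\phi_i \circ \iota_{ij} = \phi_j \circ f_{ij}$ by the commuting cube of Theorem~\ref{THM: colimit of bundles}, a short diagram chase gives $s_i = s_j \circ f_{ij}$ on $M_{ij}$ (using that $F_{ij}$ covers $f_{ij}$, so the fibrewise identification is exactly what the compatibility condition in $\prod_{\mathcal{F}}\Gamma(E_i)$ records). Linearity of $\Psi$ is immediate from the fibrewise-linear structure.

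Next I would construct the inverse. Given $(s_1,\ldots,s_n)$ in the fibred product, define $\textbf{s}: \textbf{M}\rightarrow \textbf{E}$ pointwise by $\textbf{s}([x,i]) := \chi_i(s_i(x))$. The compatibility condition $s_i = s_j\circ f_{ij}$ on $M_{ij}$, together with $\chi_i\circ I_{ij} = \chi_j \circ F_{ij}$ and $F_{ij}\circ s_i = s_j \circ f_{ij}$, shows this is well-defined on equivalence classes. That $\boldsymbol{\pi}\circ\textbf{s} = \mathrm{id}_{\textbf{M}}$ follows because each $\chi_i$ covers $\phi_i$. Smoothness of $\textbf{s}$ is the content of the ``gluing'' half of the argument: since $\{M_i\}$ is an open cover of $\textbf{M}$ and $\textbf{s}\circ\phi_i = \chi_i\circ s_i$ is smooth (composition of smooth maps) for each $i$, the remark following Lemma~\ref{LEM: non-Haus smooth manifold} — that a map out of $\textbf{M}$ is smooth iff its precompositions with the $\phi_i$ are — gives smoothness of $\textbf{s}$. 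This is the analogue of Item~(1) of Lemma~\ref{LEM: functions on M} for bundle sections, and I would either cite the bundle version of the gluing lemma (cf.\ \cite[§2]{oconnell2023vector}) or spell out that local triviality lets one reduce it to the function case coordinate-wise. Finally one checks $\Psi$ and this map are mutually inverse, which is a routine unwinding of the definitions.

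The main obstacle — or rather the point requiring the most care — is surjectivity onto the fibred product, specifically verifying that the glued section $\textbf{s}$ is genuinely smooth as a section of the \emph{non-Hausdorff} bundle $\textbf{E}$, not merely continuous. Unlike the Hausdorff setting, one cannot invoke partitions of unity (Lemma~\ref{LEM: no partitions of unity}), so the argument must go through the colimit/universal-property description of $\textbf{E}$ from Theorem~\ref{THM: colimit of bundles} rather than a patching construction. Concretely, the universal property of $\textbf{E}$ as a colimit of the $E_i$ produces the required smooth bundle morphism directly from the compatible data $(s_i)$, which is cleaner than checking chart-by-chart; I would phrase the inverse construction in exactly those terms. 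The other place to be slightly careful is that the definition of $\prod_{\mathcal{F}}\Gamma(E_i)$ implicitly identifies $\Gamma(E_{ij})$ with a subspace via both $I_{ij}^*$ and $F_{ij}^*\circ(\text{restriction to }M_{ji})$, so one should note these agree — but this is precisely the hypothesis that $F_{ij}$ covers $f_{ij}$, so no real work is needed. Everything else (well-definedness on equivalence classes, the section property, linearity, the two compositions being identities) is bookkeeping that parallels the proof of Theorem~\ref{THM: fibred product of smooth functions} verbatim.
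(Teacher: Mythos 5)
Your proposal is correct and follows the approach the paper itself intends: the theorem is stated without proof here (it is imported from \cite[\S 2.2]{oconnell2023vector}), but your argument --- restrict along the embeddings $\chi_i,\phi_i$ to get the tuple $(s_i)$, then glue a compatible tuple back using the commuting cube $\chi_i\circ I_{ij}=\chi_j\circ F_{ij}$, $\phi_i\circ\iota_{ij}=\phi_j\circ f_{ij}$ and the fact that smoothness of a map out of $\textbf{M}$ is checked on the open cover $\{M_i\}$ --- is exactly the section-valued analogue of the gluing argument the paper sketches for Lemma \ref{LEM: functions on M} and Theorem \ref{THM: fibred product of smooth functions}, and no partition of unity is needed since smoothness is local. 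One small correction: for the inverse you should invoke the universal property of $\textbf{M}$ (as a colimit of the $M_i$, applied to the compatible family $\chi_i\circ s_i\colon M_i\to\textbf{E}$), not the universal property of $\textbf{E}$, since a section is a map \emph{into} $\textbf{E}$ and colimits only govern maps \emph{out of} the colimit object; your primary gluing argument already does this correctly, so the slip is only in the closing remark.
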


\noindent In short, this means that any section on a bundle $\textbf{E}\xrightarrow{\pi} \textbf{M}$ canonically admits any of the following equivalent representations:
$$ \textbf{s} = (\phi_1^* \textbf{s}, \cdots , \phi_n^*\textbf{s} )= (\textbf{s}\circ \phi_1, \cdots , \textbf{s}\circ \phi_n ) = (\textbf{s}|_{M_1}, \cdots , \textbf{s}|_{M_n}) =  ( s_1, \cdots , s_n  ).   $$
Moreover, the isomorphism described in Theorem \ref{THM: sections are fibred product} is an isomorphism of $C^\infty(\textbf{M})$-modules, so the linear operations on the space $\Gamma(\textbf{E})$ translate into this representation as well.  \\

For our final observation, we see that sections can be used to describe the Hausdorff-violating points in a colimit bundle.

\begin{lemma}\label{LEM: Hausdorff-violating points for sections of bundles}
    Let $\textbf{E}$ be a vector bundle over $\textbf{M}$, with $\textbf{s}$ a section of $\textbf{E}$. If $[x,i]$ and $[y,j]$ are Hausdorff-inseparable in $\textbf{M}$, then $\textbf{s}([x,i])$ and $\textbf{s}([y,j])$ are Hausdorff-inseparable in $\textbf{E}$.
\end{lemma}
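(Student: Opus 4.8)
The plan is to reduce everything to the description of Hausdorff-violating points in $\textbf{E}$ supplied by Theorem \ref{THM: Summary of topological properties} (applied to the bundle $\textbf{E}$, which is itself a non-Hausdorff manifold built as an adjunction of the $E_i$ along the bundle maps $F_{ij}$), together with the fibre-product representation of $\textbf{s}$ from Theorem \ref{THM: sections are fibred product}. First I would unpack the hypothesis: by Theorem \ref{THM: Summary of topological properties}(2) applied to $\textbf{M}$, the points $[x,i]$ and $[y,j]$ are Hausdorff-inseparable precisely when $x \in \partial^{M_i}(M_{ij})$, $y \in \partial^{M_j}(M_{ji})$, and $\overline{f_{ij}}(x) = y$. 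So I may fix such an $x$ and $y$.

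Next I would invoke the colimit-of-bundles picture: $\textbf{E} = \adjg E_i$ is an adjunction system whose gluing maps are the bundle isomorphisms $F_{ij}$, which cover the $f_{ij}$ and (by Remark \ref{REM: assumptions in this paper} plus the fact that $F_{ij}$ is linear on fibres) extend to bundle isomorphisms $\overline{F_{ij}} : E_i|_{Cl^{M_i}(M_{ij})} \to E_j|_{Cl^{M_j}(M_{ji})}$ covering $\overline{f_{ij}}$. Hence $\textbf{E}$ satisfies the hypotheses of Theorem \ref{THM: Summary of topological properties}, and a pair of points $[v,i] \in E_i$ and $[w,j] \in E_j$ is Hausdorff-inseparable in $\textbf{E}$ iff $v$ lies over a boundary point $p \in \partial^{M_i}(M_{ij})$, $w$ lies over $\overline{f_{ij}}(p)$, and $\overline{F_{ij}}(v) = w$. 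Now write $\textbf{s} = (s_1,\dots,s_n)$ via Theorem \ref{THM: sections are fibred product}, so $\textbf{s}([x,i]) = [s_i(x), i]$ and $\textbf{s}([y,j]) = [s_j(y), j]$, with $s_i(x) \in E_i$ over $x$ and $s_j(y) \in E_j$ over $y$. The compatibility condition $s_i = s_j \circ f_{ij}$ on $M_{ij}$ says $F_{ij}(s_i(p)) = s_j(f_{ij}(p))$ for all $p \in M_{ij}$; by continuity of $s_i$, $s_j$, $F_{ij}$, $f_{ij}$ and their extensions, this passes to the closure, giving $\overline{F_{ij}}(s_i(x)) = s_j(\overline{f_{ij}}(x)) = s_j(y)$. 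Taking $p = x$, $v = s_i(x)$, $w = s_j(y)$ in the criterion above shows that $\textbf{s}([x,i])$ and $\textbf{s}([y,j])$ are Hausdorff-inseparable in $\textbf{E}$, as desired.

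The main obstacle is the boundary/closure passage in the last step: the fibre-product compatibility condition for $\textbf{s}$ only holds a priori on the open set $M_{ij}$, whereas $x$ is a boundary point, so I must argue that $\overline{F_{ij}} \circ s_i$ and $s_j \circ \overline{f_{ij}}$, two continuous maps on $Cl^{M_i}(M_{ij})$ that agree on the dense subset $M_{ij}$, agree on all of $Cl^{M_i}(M_{ij})$ — this uses that $M_{ij}$ is dense in its closure (immediate, $M_{ij}$ being open and its closure being $M_{ij} \cup \partial^{M_i}(M_{ij})$) and that $E_j$, being Hausdorff, is a $T_2$ space in which maps agreeing on a dense set agree everywhere. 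A secondary point worth stating explicitly is that $\overline{F_{ij}}$ genuinely exists and covers $\overline{f_{ij}}$: since $F_{ij}$ is a vector-bundle isomorphism over $f_{ij}$ and $s_i$ is smooth, one can either cite the colimit-of-bundles construction of \cite[§2]{oconnell2023vector} directly, or build $\overline{F_{ij}}$ fibrewise from $df_{ij}$-type data as in Theorem \ref{THM: tensor bundles canonical form}; either way this is routine and I would relegate it to a sentence. Everything else is a direct translation between the two characterizations of Hausdorff-inseparability.
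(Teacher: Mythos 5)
Your argument is genuinely different from the paper's, and it contains a gap at the stated level of generality. The crux of your proof is applying Theorem \ref{THM: Summary of topological properties}(2) to $\textbf{E}$ itself, which requires the gluing maps $F_{ij}$ of $\textbf{E}$ to extend to homeomorphisms $\overline{F_{ij}}: Cl^{E_i}(E_{ij}) \to Cl^{E_j}(E_{ji})$. You treat this as routine, but for an \emph{arbitrary} vector bundle $\textbf{E}$ over $\textbf{M}$ (which is what the lemma asserts) it is false: Theorem \ref{THM: colimit of bundles} only requires the $F_{ij}$ to be bundle isomorphisms over the open sets $M_{ij}$, and fibrewise linearity does not force a continuous extension to $\partial^{M_i}(M_{ij})$. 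Concretely, take $\textbf{M}$ to be the line with two origins built from $M_{12}=(-\infty,0)$, trivial line bundles $E_1=E_2=\mathbb{R}\times\mathbb{R}$, and $F_{12}(p,\xi)=(p,(2+\sin(1/p))\xi)$; this satisfies all the hypotheses of Theorem \ref{THM: colimit of bundles}, yet $\overline{F_{12}}$ does not exist, and the Hausdorff-violating pairs in the fibres over the two origins do \emph{not} form the graph of a single linear map (the point $(0,\xi)$ in $E_1$ is inseparable from $(0,c\xi)$ in $E_2$ for every $c\in[1,3]$). Your citation of Theorem \ref{THM: tensor bundles canonical form} only covers tensor bundles, where $F_{ij}$ is built from $df_{ij}$ and the extension does follow from Remark \ref{REM: assumptions in this paper}; that suffices for the applications later in the paper (e.g.\ the $q$-form bundles in Lemma \ref{LEM: fibred product for differential forms with boundary}), but not for the lemma as stated.

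The remaining steps of your argument are sound where they apply: the identification $Cl^{E_i}(E_{ij}) = \pi_i^{-1}(Cl^{M_i}(M_{ij}))$, the density argument extending $F_{ij}\circ s_i = s_j\circ f_{ij}$ to the closure (using that $E_j$ is Hausdorff), and the final matching against the boundary characterization are all correct. For contrast, the paper's proof avoids any structural hypothesis on $\textbf{E}$: since $\textbf{M}$ is first-countable, the inseparable pair $[x,i]$, $[y,j]$ admits a single sequence in $M_{ij}$ converging to both; continuity of $\textbf{s}$ pushes this sequence forward to one converging to both $\textbf{s}([x,i])$ and $\textbf{s}([y,j])$, and injectivity of the section (it splits $\boldsymbol{\pi}$) makes these two limits distinct, hence inseparable. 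If you want to salvage your approach, either restrict the lemma to tensor bundles, or add the extendability of the $F_{ij}$ as a hypothesis; otherwise the sequence argument is the right tool here.
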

\begin{proof}
    Suppose first that $[x,i]$ and $[y,j]$ are Hausdorff-inseparable in $\textbf{M}$. Then there is some sequence $[a_\alpha, i]$ in $M_{ij}$ that converges to both points in $\textbf{M}$. Since any continuous map between second-countable spaces will preserve the convergence of sequences, the sequence $\textbf{s}([a_\alpha, i])$ will converge to both $\textbf{s}([x,i])$ and $\textbf{s}([y,j])$ in $\textbf{E}$. Since $\textbf{s}$ is injective, $\textbf{s}([x,i])$ and $\textbf{s}([y,j])$ are distinct points that are both limits of the same convergent sequence, so are Hausdorff-inseparable.
\end{proof}

\section{Non-Hausdorff Differential Forms}
In this section we will describe the differential forms on any non-Hausdorff manifold $\textbf{M}$ satisfying the criteria of Remark \ref{REM: assumptions in this paper}. As we will see, the fibre-product formula of Theorem \ref{THM: sections are fibred product} will provide a reasonable description of non-Hausdorff differential forms. However, special care is required to prove the locality of any derivative operators defined on $\textbf{M}$. To illustrate this idea, we will first provide a function-theoretic description of vector fields on $\textbf{M}$.

\subsection{Vector Fields}
Since non-Hausdorff manifolds are locally-diffeomorphic to Hausdorff ones, there are at least some definitions of tangent vectors which directly apply to our setting. Under a geometric interpretation, we can always define tangent spaces using velocity vectors of curves passing through a given point. However, a function-theoretic definition of non-Hausdorff tangent vectors is currently lacking in the literature. \\

Naively, one might try to define tangent vectors as point-derivations of the ring $C^\infty(\textbf{M})$. However, this will not make sense, since bump functions of the type used in \cite[Prop. 3.8]{lee2013smooth} to prove the locality of derivations will not exist in a non-Hausdorff manifold. Instead, we will consider derivations on the space $C^\infty_{[x,i]}(\textbf{M})$ of germs of functions at a point. Locality is naturally incorporated into this definition since $C^\infty_{[x,i]}(\textbf{M})$ will equal $C^\infty_{[x,i]}(U)$ for any open subset $U$ of $\textbf{M}$. We can therefore describe a basis of the tangent space using coordinate functions defined from open charts: $$ \sum_{a=1, \cdots, d} \lambda^a \frac{\partial}{\partial x_a}\Big{|}_{[x,i]} $$ as in the Hausdorff case (cf \cite[§14.1]{tu2011manifolds}). The geometric and function-theoretic definitions of tangent spaces can be shown to be equivalent, since they both have the same dimension as the tangent spaces of any $M_i$. An explicit isomorphism can be described using differentials $d\phi_i$, which either maps curves from $M_i$ into $\textbf{M}$, or maps the space $C^\infty_x(M_i)$ into $C^\infty_{[x,i]}(\textbf{M})$. \\

We may define the space $\mathfrak{X}(\textbf{M})$ of vector fields on $\textbf{M}$ as the global sections of the bundle $T\textbf{M}$. Locally, we see that a vector field $\textbf{X}$ in $\mathfrak{X}(\textbf{M})$ is smooth if and only if the coefficients $\lambda^a$ described above are elements of $C^\infty(U)$, where $U$ is the local chart used to induce the coordinate expression. By appealing to the standard notions of local charts (i.e. with the Euclidean Leibniz rule), we see that $\textbf{X}$ is indeed a derivation of the ring $C^\infty(\textbf{M})$. \\

In order to describe an explicit bijection between the space of vector fields and the space of derivations, we must first prove that every derivation is a local operator on $\textbf{M}$. We will approach this problem by first proving an auxilliary property, which we will call ``semi-locality". 
\begin{lemma}\label{LEM: derivations are semi-local}
       Let $\textbf{D}$ be some derivation of $C^\infty(\textbf{M})$. If $\textbf{r}$ is a smooth function on $\textbf{M}$ that vanishes on $M_i$, then $\textbf{D}\textbf{r}$ also vanishes on $M_i$.
    \end{lemma}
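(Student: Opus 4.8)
\noindent The plan is to adapt the classical ``multiply by a bump function'' proof that a derivation is a local operator, but to carry it out only away from the Hausdorff-violating locus, and then to close the remaining gap by continuity. Throughout, identify each $M_i$ with the open submanifold $\phi_i(M_i)$ of $\textbf{M}$, and let $B\subseteq\textbf{M}$ be the (closed) set of Hausdorff-violating points; by Theorem \ref{THM: Summary of topological properties} a point $[x,i]\in M_i$ lies in $B$ exactly when $x$ lies on one of the finitely many frontier sets $\partial^{M_i}(M_{ij})$, $j\neq i$. First I would show that $\textbf{D}\textbf{r}$ vanishes on $M_i\setminus B$, and then deduce that it vanishes on all of $M_i$.

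\noindent For the first part, fix $p=[x,i]\in M_i\setminus B$. Because $x$ avoids each of the finitely many closed sets $\partial^{M_i}(M_{ij})$, there is a precompact coordinate neighbourhood $V$ of $x$ in $M_i$ with $Cl^{M_i}(V)$ disjoint from every $\partial^{M_i}(M_{ij})$. Pick an ordinary bump function $h_i\in C^\infty(M_i)$ with $h_i\equiv1$ near $x$ and $\mathrm{supp}(h_i)\subseteq V$, and let $\textbf{h}\colon\textbf{M}\to\mathbb{R}$ be the function equal to $h_i$ on $M_i$ and to $0$ on $\textbf{M}\setminus M_i$. A routine check shows $\textbf{h}\in C^\infty(\textbf{M})$: on each $M_j$ it is $h_i\circ f_{ji}$ on $M_{ji}$ and $0$ elsewhere, and since $\mathrm{supp}(h_i)$ is compact and stays away from the frontiers, $\mathrm{supp}(h_i)\cap M_{ij}$ is a compact subset of $M_{ij}$, which $f_{ij}$ sends to a compact subset of $M_{ji}$ bounded away from $\partial^{M_j}(M_{ji})$, so $\textbf{h}|_{M_j}$ is the extension by zero of a compactly supported smooth function and hence smooth (alternatively one may build $\textbf{h}$ directly from Theorem \ref{THM: fibred product of smooth functions} and Lemma \ref{LEM: functions on M}). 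The same disjointness shows $\phi_i(\mathrm{supp}(h_i))$ is closed in $\textbf{M}$ --- a limit point of it outside $M_i$ would, by Theorem \ref{THM: Summary of topological properties}, be a Hausdorff-partner of a point of $\mathrm{supp}(h_i)$ lying on some $\partial^{M_i}(M_{ij})$, which is impossible --- so $\mathrm{supp}(\textbf{h})\subseteq M_i$. As $\textbf{r}$ vanishes on $M_i$, the product $\textbf{h}\,\textbf{r}$ vanishes identically on $\textbf{M}$, so, since $\textbf{D}$ is a derivation, $0=\textbf{D}(\textbf{h}\,\textbf{r})=\textbf{h}\cdot\textbf{D}\textbf{r}+\textbf{r}\cdot\textbf{D}\textbf{h}$; evaluating at $p$, where $\textbf{h}(p)=1$ and $\textbf{r}(p)=0$, gives $\textbf{D}\textbf{r}(p)=0$.

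\noindent For the second part, I would use that by Remark \ref{REM: assumptions in this paper} each $M_{ij}$ is a regular domain, so every $\partial^{M_i}(M_{ij})$ is a closed embedded hypersurface in $M_i$, in particular nowhere dense; hence $B\cap M_i$, being a finite union of such sets, is closed and nowhere dense in $M_i$, so $M_i\setminus B$ is dense in $M_i$. Now $\textbf{D}\textbf{r}$ is an element of $C^\infty(\textbf{M})$ and so is continuous, and it vanishes on the dense subset $M_i\setminus B$ of $M_i$ by the first part; therefore it vanishes on all of $M_i$.

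\noindent The step I expect to be the real obstacle is the first one, and the fact that it can only be run on $M_i\setminus B$ is essential: at a Hausdorff-violating point $p\in M_i$ the bump-function argument simply cannot work, because any $\textbf{h}\in C^\infty(\textbf{M})$ takes the same value at $p$ and at its Hausdorff-partner $q$, whereas $q$ need not lie in $\{\textbf{r}=0\}$, so one cannot arrange both $\textbf{h}(p)\neq0$ and $\mathrm{supp}(\textbf{h})\subseteq\{\textbf{r}=0\}$. What rescues the proof is the continuity of $\textbf{D}\textbf{r}$ together with the negligibility of the Hausdorff-violating locus inside $M_i$, and it is exactly here that the regular-domain hypothesis of Remark \ref{REM: assumptions in this paper} is used.
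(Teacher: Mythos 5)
Your proof is correct and follows essentially the same strategy as the paper: both arguments first establish the vanishing of $\textbf{D}\textbf{r}$ at the points of $M_i$ that do not violate the Hausdorff property via the Leibniz rule applied to a cutoff function built away from the Hausdorff-violating locus, and then extend to the Hausdorff-violating points by continuity. The only cosmetic differences are that the paper uses a single global function $\boldsymbol{\psi}$ satisfying $\boldsymbol{\psi}\textbf{r}=\textbf{r}$ (so that $\textbf{D}\textbf{r}=(\textbf{D}\textbf{r})\boldsymbol{\psi}$ on $M_i$) where you use pointwise bumps $\textbf{h}$ with $\textbf{h}\textbf{r}\equiv 0$ and $\textbf{h}(p)=1$, and the paper closes the gap with an explicit convergent sequence rather than your density argument --- for which, incidentally, the regular-domain hypothesis is not really needed, since the frontier of any open set is automatically nowhere dense.
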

\begin{proof}
We first define a function $\boldsymbol{\psi}:\textbf{M} \rightarrow \mathbb{R}$ such that $\boldsymbol{\psi} \textbf{r} = \textbf{r}$ everywhere, with $\boldsymbol{\psi}=1$ for all Hausdorff-violating points in $M_i$. We construct this function in a few steps: first consider the set $B:= \bigcup_{j\neq i} \partial^i M_{ij}$, which consists of all Hausdorff-violating points in $\textbf{M}$ coming from $M_i$. Let $\boldsymbol{\psi}_i: M_i \rightarrow \mathbb{R}$ to be a smooth bump function for the set $B$, constructed as in \cite[§2]{lee2013smooth}. We now define the smooth map $\boldsymbol{\psi}:\textbf{M} \rightarrow \mathbb{R}$ as follows: 
$$   \boldsymbol{\psi}([x,j]) = \begin{cases} \psi_i \circ \phi_i^{-1}([x,j]) & \text{if} \ [x,j] \in M_i \\ 1 & \text{otherwise}   \end{cases}       $$
The map $\boldsymbol{\psi}$ will restrict to the bump function $\psi_i$ on $M_i$, and will act as a cutoff function that equals $1$ on $M_j$ everywhere outside the set $M_{ji}$. Figure \ref{FIG: bump function construction} depicts the function $\boldsymbol{\psi}$ for the branched real line. By construction, we have that $\textbf{r}\boldsymbol{\psi} = \textbf{r}$ everywhere on $\textbf{M}$. It follows that  $$ \textbf{D}\textbf{r} = \textbf{D}(\textbf{r}\boldsymbol{\psi}) = (\textbf{D}\textbf{r})\boldsymbol{\psi} + \textbf{r} (D\boldsymbol{\psi}),  $$
which reduces to the equality $\textbf{D}\textbf{r} = (\textbf{D}\textbf{r})\boldsymbol{\psi}$ on $M_i$. We will now show that $\textbf{D}\textbf{r}=0$ on $M_i$. There are two cases to consider. \\

Firstly, suppose that $[x,i]$ is some point in $M_i$ that does not violate the Hausdorff property in $\textbf{M}$. According to the construction of $\boldsymbol{\psi}$, this means that $\boldsymbol{\psi}([x,i])\neq 1$. Thus the Leibniz property tells us that $(\textbf{D}\textbf{r})([x,i]) = (\textbf{D}\textbf{r})([x,i])\cdot \boldsymbol{\psi}([x,i])$, hence $(\textbf{D}\textbf{r})([x,i]) = 0$. Suppose now that $[x,i]$ does violate the Hausdorff property in $\textbf{M}$. This means that $x$ lies in the set $B$, and thus $\boldsymbol{\psi}([x,i])=1$. Let $a_n$ be some sequence in $M_{ij}$ that converges to $x$ in $M_i$. We may always arrange for this sequence to avoid the other boundary sets in $B$, so without loss of generality we may conclude that $\boldsymbol{\psi}([a_n,i])\neq 1$ for all $n$. By assumption the function $\textbf{D}\textbf{r}$ is smooth, so it follows that $\textbf{D}\textbf{r}([a_n,i])$ converges to $\textbf{D}\textbf{r}([x,i])$ in $\mathbb{R}$. However, we just showed that $\textbf{D}\textbf{r}$ vanishes for every element of $M_i$ that does not violate the Hausdorff property. This means that $\textbf{D}\textbf{r}([a_n,i])$ is a constant sequence of zeroes in $\mathbb{R}$. The limit of this sequence is $0$, that is, $$\textbf{D}\textbf{r}([x,i]) = \textbf{D}\textbf{r}\left(\lim_{n\rightarrow \infty} [a_n,i]\right) =  \lim_{n\rightarrow \infty} \textbf{D}\textbf{r}([a_n,i]) = \lim_{n\rightarrow \infty} 0 = 0.$$   
We may thus conclude that $\textbf{D}\textbf{r}=0$ on $M_i$, as required.
    \end{proof}

\begin{figure}  
\centering 
  \begin{subfigure}[b]{0.3\linewidth}
    \centering
    \begin{tikzpicture}[scale=0.6]
 
 \draw[] (-1,0)--(4,0);
 \draw[] (-1,-1)--(4,-1);
 \draw[dotted] (-0.75,0)--(-0.75,-1);
 \draw[dotted] (-0.5,0)--(-0.5,-1);
  \draw[dotted] (-0.25,0)--(-0.25,-1);
 \draw[dotted] (-0,0)--(-0,-1);
  \draw[dotted] (0.25,0)--(0.25,-1);
 \draw[dotted] (0.5,0)--(0.5,-1);
  \draw[dotted] (0.75,0)--(0.75,-1);
 \draw[dotted] (1,0)--(1,-1);
 \draw[dotted] (1.25,0)--(1.25,-1);

 \fill(1.5,0) circle[radius=0.08cm] {};
 \fill(1.5,-1) circle[radius=0.08cm] {};

\draw[->] (1.5, 3)--(1.5,0.5);

 \draw[] (-1,3.5)--(1.45, 3.5);
 \draw[] (1.5, 3.4)--(4,2.75);
  \draw[] (1.5, 3.6)--(4, 4.25);
 
 \fill(1.5,3.4) circle[radius=0.08cm] {};
 \fill(1.5,3.6) circle[radius=0.08cm] {};
 
\end{tikzpicture}   
    \caption{\footnotesize{The $2$-branched real line}} \label{fig:M1}  
  \end{subfigure}
\begin{subfigure}[b]{0.3\linewidth}
   \centering
    \begin{tikzpicture}
        \draw[] (0.1,0)--(3.3,0);
        \draw[dotted] (1.71,0)--(1.71,3);   
        \fill(1.71,0) circle[radius=0.06cm] {};    
\begin{axis}[xmin=-3.2, xmax=3.2, ymin=-0.2, ymax=1.2, axis lines=none,
  scale=0.5]
\addplot[red, samples=100, smooth, domain=-0.95:-3, thick]
   plot (\x, { 0 });
\addplot[red, samples=100, smooth, domain=-1:1, thick, label={x}]
   plot (\x, {exp(1-1/(1-x^2)});
\addplot[red, thick, samples=100, smooth, domain=1:3]
   plot (\x, {0} );
\end{axis}
        
    \end{tikzpicture}
   
\caption{\footnotesize{The bump function on $M_1$}} \label{fig:M2}  
\end{subfigure}
\begin{subfigure}[b]{0.3\linewidth}
  \centering
   \begin{tikzpicture}
        \draw[] (0.1,0)--(3.3,0);
        \draw[dotted] (1.71,0)--(1.71,3);   
        \fill(1.71,0) circle[radius=0.06cm] {};    
\begin{axis}[xmin=-3.2, xmax=3.2, ymin=-0.2, ymax=1.2, axis lines=none,
  scale=0.5]
\addplot[red, samples=100, smooth, domain=-0.95:-3, thick]
   plot (\x, {0});
\addplot[red, samples=100, smooth, domain=-1:0, thick, label={x}]
   plot (\x, {exp(1-1/(1-x^2)});
\addplot[red, thick, samples=100, smooth, domain=0:3]
   plot (\x, {1} );
\end{axis}
        
    \end{tikzpicture}

\caption{\footnotesize{The cutoff function on $M_2$}} \label{fig:M2}  
\end{subfigure}
\caption{Schematics of the smooth map $\boldsymbol{\psi}$, defined for the $2$-branched real line.}
\label{FIG: bump function construction}
\end{figure}

This notion of semi-locality means that we can always restrict a derivation $\textbf{D}$ of $C^{\infty}(\textbf{M})$ down to a derivation $D_i$ of $C^\infty(M_i)$ by defining $$ (D_i r)(x) = (\textbf{D}\textbf{r})([x,i]),$$ where $\textbf{r}$ is any extension of $r$ into $\textbf{M}$, à la Lemma \ref{LEM: functions on M}. Observe that $D_i$ inherits its linearity and Leibniz property from $\textbf{D}$. Moreover, the semi-locality property of Lemma \ref{LEM: derivations are semi-local} ensures that this map is well defined: any other extension $\tilde{\textbf{r}}$ of $r$ will cause the global function $\textbf{r} - \tilde{\textbf{r}}$ to vanish on $M_i$, which in turn means that $\textbf{D}\textbf{r} = \textbf{D}\tilde{\textbf{r}}$ on $M_i$. We will now argue that this notion of semi-locality can also be used to guarantee that an arbitrary derivation on $C^\infty(\textbf{M})$ is a local operator.
\begin{lemma}\label{LEM: derivations are local operators}
        Let $\textbf{D}: C^\infty(\textbf{M})\rightarrow C^\infty(\textbf{M})$ be a derivation. If $\textbf{r}$ is a smooth function that vanishes on some open set $U$, then $\textbf{D}\textbf{r}$ also vanishes on $U$.  
    \end{lemma}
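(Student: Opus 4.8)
The plan is to reduce locality to the semi-locality result of Lemma \ref{LEM: derivations are semi-local} by a standard ``bump function'' argument, carried out chart-by-chart but taking care that the bumps live on the Hausdorff pieces $M_i$ (where honest bump functions exist) rather than on $\textbf{M}$. Fix a point $[x,i]\in U$; it suffices to show $(\textbf{D}\textbf{r})([x,i])=0$. Since the $M_i$ form an open cover of $\textbf{M}$, we may pick an index $i$ with $[x,i]\in M_i$, and then work inside $M_i\cap U$, which is open in the Hausdorff manifold $M_i$.

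First I would choose a precompact coordinate neighbourhood $V$ of $x$ in $M_i$ with $\overline{V}\subseteq \phi_i^{-1}(M_i\cap U)$, and take a smooth bump function $\chi_i:M_i\to\mathbb{R}$ that equals $1$ on a neighbourhood of $x$ and is supported in $V$. By the extension-by-zero part of Lemma \ref{LEM: functions on M}, $\chi_i$ extends to a smooth function $\boldsymbol{\chi}$ on $\textbf{M}$ which vanishes outside a neighbourhood of $M_i$ and in particular vanishes on all the Hausdorff-violating points; its support inside $M_i$ is contained in $V\subseteq\phi_i^{-1}(U)$. Now consider the product $\textbf{r}\boldsymbol{\chi}$: it is a smooth function on $\textbf{M}$ that vanishes on the open set $\phi_i(V)$ where $\boldsymbol{\chi}\equiv 1$ need not hold globally, so instead I would note that $\textbf{r}\boldsymbol{\chi}$ vanishes on the open set $\phi_i^{-1}(U)\cap(\text{supp}\,\boldsymbol{\chi})$... more cleanly: since $\boldsymbol{\chi}$ is supported in $\overline{V}\subseteq\phi_i^{-1}(U)$ and $\textbf{r}$ vanishes on $U$, the product $\textbf{r}\boldsymbol{\chi}$ vanishes \emph{identically} on $\textbf{M}$. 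Applying $\textbf{D}$ and the Leibniz rule gives $0=\textbf{D}(\textbf{r}\boldsymbol{\chi})=(\textbf{D}\textbf{r})\boldsymbol{\chi}+\textbf{r}(\textbf{D}\boldsymbol{\chi})$, and evaluating at $[x,i]$ — where $\boldsymbol{\chi}=1$ and $\textbf{r}=0$ — yields $(\textbf{D}\textbf{r})([x,i])=0$.

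The main subtlety, and what I expect to be the one point needing real care, is the construction of $\boldsymbol{\chi}$: the extension-by-zero of Lemma \ref{LEM: functions on M} extends a function from $M_i$ by first transporting it across the closures $\overline{M_{ji}}$ via the $\overline{f_{ij}}$ and then extending by zero into the other $M_j$, so one must check that if $\chi_i$ is already supported away from the boundary set $B=\bigcup_{j\neq i}\partial^{M_i}M_{ij}$ — which we can arrange since $[x,i]\in U$ and we are free to shrink $V$ — then the resulting $\boldsymbol{\chi}$ really does vanish on a neighbourhood of every Hausdorff-violating point and agrees with $\chi_i$ on $M_i$. Granting that, the argument is a verbatim copy of the Hausdorff locality proof (cf. \cite[Prop. 3.8]{lee2013smooth}), with semi-locality doing no work here beyond guaranteeing the extension exists; indeed one could alternatively bypass semi-locality entirely and simply invoke the existence of $\boldsymbol{\chi}$ directly. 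I would present the clean version: build $\boldsymbol{\chi}$ supported in a small coordinate ball around $[x,i]$ inside $M_i\cap U$ and equal to $1$ near $[x,i]$, observe $\textbf{r}\boldsymbol{\chi}\equiv 0$, and conclude by Leibniz.
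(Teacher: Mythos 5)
Your strategy --- a direct bump-function argument on $\textbf{M}$, using the extension-by-zero of Lemma \ref{LEM: functions on M} in place of a global partition of unity --- is genuinely different from the paper's proof, which instead uses the semi-locality of Lemma \ref{LEM: derivations are semi-local} to restrict $\textbf{D}$ to a well-defined derivation $D_i$ of $C^\infty(M_i)$ and then quotes the locality of Hausdorff derivations. Your argument is correct, and arguably cleaner than the paper's, at every point $[x,i]\in U$ with $x\notin B:=\bigcup_{j\neq i}\partial^{M_i}M_{ij}$: there you really can shrink $V$ so that $\overline{V}$ avoids the closed, nowhere dense set $B$, the extension $\boldsymbol{\chi}$ is then genuinely supported in $\phi_i(\overline{V})\subseteq U$, the product $\textbf{r}\boldsymbol{\chi}$ vanishes identically, and the Leibniz computation closes.

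The gap is at the Hausdorff-violating points, which is precisely the case the semi-locality lemma exists to handle. If $x\in\partial^{M_i}M_{ij}$ then you cannot shrink $V$ to avoid $B$, since $x$ itself lies in $B$; your parenthetical ``which we can arrange since $[x,i]\in U$ and we are free to shrink $V$'' is false here. Worse, the failure is not a matter of choosing the extension more carefully: any continuous $\boldsymbol{\chi}$ with $\boldsymbol{\chi}([x,i])=1$ must also equal $1$ at the inseparable partner $[\overline{f_{ij}}(x),j]$, hence is nonzero on a whole $M_j$-neighbourhood of that point, and that neighbourhood contains points of $M_j\setminus\overline{M_{ji}}$ lying outside $U$ (e.g.\ when $U\subseteq M_i$) where $\textbf{r}$ need not vanish. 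So $\textbf{r}\boldsymbol{\chi}$ is \emph{not} identically zero, $\textbf{D}(\textbf{r}\boldsymbol{\chi})$ need not vanish globally, and the Leibniz identity only yields $(\textbf{D}\textbf{r})([x,i])=\textbf{D}(\textbf{r}\boldsymbol{\chi})([x,i])$ with $\textbf{r}\boldsymbol{\chi}$ vanishing on a neighbourhood of $[x,i]$ --- which is exactly the locality statement you are trying to prove. The repair is the limiting argument from Lemma \ref{LEM: derivations are semi-local}: your bump argument shows $\textbf{D}\textbf{r}$ vanishes on the dense open subset $(U\cap M_i)\setminus B$ of $U\cap M_i$, and continuity of $\textbf{D}\textbf{r}$ then forces it to vanish on all of $U\cap M_i$, hence on $U=\bigcup_i(U\cap M_i)$. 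With that extra step your proof is complete, but note that this is precisely the ingredient you hoped to ``bypass,'' so the semi-locality machinery (or at least its proof technique) is not actually avoidable.
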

\begin{proof}
        Let $p$ be some point in $U$. Since the $M_i$ cover $\textbf{M}$, $p$ is of the form $[x,i]$ for some $x$ in $M_i$. The restriction of the function $\textbf{r}$ to $M_i$ will vanish on the open set $U\cap M_i$. Since the restriction $D_i$ is a derivation on $C^\infty(M_i)$, and all Hausdorff derivations are local, it follows that $D_i f_i$ vanishes on the subset $U\cap M_i$. In the construction of the derivation $D_i$, we are free to choose the extension of $f_i$ to a global function in $\textbf{M}$. In particular, we may choose the somewhat-trivial extension $\textbf{r}$ to yield $ 0 = (D_i f_i)(x) = (\textbf{D}\textbf{r})([x,i]).$
    \end{proof}
With a confirmed notion of locality for derivations on $C^\infty(\textbf{M})$, an explicit isomorphism between the space of vector fields $\mathfrak{X}(\textbf{M})$ and the space of derivations $\text{Der}(C^\infty(\textbf{M}))$ can now be derived from the procedure outlined in \cite[Pg. 219]{tu2011manifolds}. Under this reading, we may meaningfully consider a vector field $\textbf{X}$ on $\textbf{M}$ as a derivation of $C^\infty(\textbf{M})$. Any vector fields on $M_i$ can be restricted to each $M_{ij}$ by an analogue to the above, so we can further restrict any derivation $D_i$ defined above to a derivation $D_{ij}$ on $C^\infty(M_{ij})$. 
In fact from Theorems \ref{THM: tensor bundles canonical form} and \ref{THM: sections are fibred product}, we may actually view any derivation $\textbf{D}$ of $C^\infty(\textbf{M})$ as an element of the fibred product $$\prod_{\mathcal{F}} \textrm{Der}(M_i) := \{ (D_1, \cdots, D_n) \ | \ D_i = D_j \ on \ M_{ij} \ \text{for all} \ i,j\in I \}.$$ In particular, this means that for any smooth function $\textbf{r}$ we have $\textbf{D}\textbf{r} = (D_1 r_1, \cdots , D_n r_n)$. 

\subsubsection*{The Lie Derivative}
In general, vector fields on $\textbf{M}$ will not admit global flows, ultimately due to the ``splitting" of vector fields at Hausdorff-violating points. However, any vector field on $\textbf{M}$ will be locally-equivalent to a Hausdorff vector field, so  there will still be unique local flows. As such, we may still meaningfully define the Lie derivative $\mathcal{L}_{\textbf{X}} \textbf{r}$ of a function $\textbf{r}$ with respect to a vector field $\textbf{X}$ pointwise as $ \mathcal{L}_{\textbf{X}} \textbf{r}([x,i]) = \mathcal{L}_{X_i} r_i(x). $ This is a manifesly local operator that will agree on the submanifolds $M_{ij}$, since both $\textbf{X}$ and $\textbf{r}$ do. As such, we may write Lie derivative as  
$$   \mathcal{L}_{\textbf{X}} \textbf{r} = (\mathcal{L}_{X_1} r_1, \cdots , \mathcal{L}_{X_n} r_n).   $$
As an immediate consequence, we observe that the equalities $\mathcal{L}_{\textbf{X}} \textbf{r} = \textbf{X}(\textbf{r})$ and $\mathcal{L}_{\textbf{X}} \textbf{Y} = [\textbf{X}, \textbf{Y}]$ both hold.   

\subsection{Differential Forms and the Exterior Derivative}
In similar spirit to the previous section, we may again apply Theorems \ref{THM: tensor bundles canonical form} and \ref{THM: sections are fibred product} to describe the algebra $\Omega^*(\textbf{M})$ of differential forms on $\textbf{M}$ via sections of the relevant exterior powers of $T^*\textbf{M}$. A fibred product formula will again hold for differential forms: given a vector field $\textbf{X}$ on $\textbf{M}$, a differential $1$-form $\boldsymbol{\omega}$ will act by $$ \boldsymbol{\omega}(\textbf{X}) = \left( \phi_1^*\omega(\phi_1^* X), \cdots, \phi_n^*\omega(\phi_n^* X)  \right) = (\omega_1(X_1) ,\cdots , \omega_n(X_n)) = \left( \boldsymbol{\omega}(\textbf{X})|_{M_1}, \cdots , \boldsymbol{\omega}(\textbf{X})|_{M_n} \right),      $$ 
\noindent with higher-degree forms acting analogously. Since the wedge product distributes over pullbacks, we may readily extend it to differential forms on $\textbf{M}$ using the equality: $$    \boldsymbol{\omega} \wedge \eta := (\omega_1 \wedge \eta_1, \cdots \omega_n \wedge \eta_n).$$
We may write an exterior derivative $d$ on $\Omega^*(\textbf{M})$ similarly: $$  d\boldsymbol{\omega} = (d\omega_1, \cdots, d\omega_n).  $$ Observe that this is well-defined, since $\iota_{ij}^*(d\omega_i) = d(\iota^*_{ij}\omega_i) = d(f_{ij}^*\omega_j) = f_{ij}^*(d\omega_j)$ for all $i,j$ in $I$. 
This definition of exterior derivative appears to be natural in that a semi-locality property similar to that of Lemma \ref{LEM: derivations are semi-local} is satisfied by construction. However, we ought to show that the operator $d$ uniquely satisfies the usual desiderata of the exterior derivative. 
\begin{theorem}\label{THM:exterior derivative properties}
The operator $\textbf{D}$ is the unique operator on $\Omega^*(\textbf{M})$ that satisfies the following properties.
\begin{enumerate}
            \item $d$ is local.
            \item $d\textbf{r}(\textbf{X}) = \textbf{X}(\textbf{r})$ for all $X\in \mathfrak{X}(\textbf{M})$ and $\textbf{r} \in C^\infty(\textbf{M})$.
            \item $d^2 = 0$
            \item $d(\boldsymbol{\omega} \wedge \boldsymbol{\eta}) = (d\boldsymbol{\omega}) \wedge \boldsymbol{\eta} + (-1)^q \boldsymbol{\omega} \wedge d\boldsymbol{\eta}$ for all $\boldsymbol{\omega} \in \Omega^q(\textbf{M})$ and $\boldsymbol{\eta} \in \Omega^*(\textbf{M})$.
        \end{enumerate}
    \end{theorem}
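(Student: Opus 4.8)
\noindent The plan is to treat existence and uniqueness separately, in both cases reducing matters to the corresponding facts about the Hausdorff manifolds $M_i$ via the fibre-product description of $\Omega^*(\textbf{M})$ provided by Theorems \ref{THM: tensor bundles canonical form} and \ref{THM: sections are fibred product} (the operator meant in the statement is $d$). Existence is the routine half. For $\boldsymbol{\omega} = (\omega_1,\dots,\omega_n)$ one checks each of (1)--(4) componentwise: if $\boldsymbol{\omega}$ vanishes on an open set $U$, then each $\omega_i=\boldsymbol{\omega}|_{M_i}$ vanishes on $U\cap M_i$, so locality of the Hausdorff exterior derivative gives $d\omega_i=0$ there and hence $d\boldsymbol{\omega}=0$ on $U=\bigcup_i(U\cap M_i)$; property (2) follows from $dr_i(X_i)=X_i(r_i)$ on each $M_i$ together with the fibre-product formulas for $\textbf{X}(\textbf{r})$ and for $d\textbf{r}$; and (3), (4) are immediate from $d^2=0$ and the graded Leibniz rule on each $M_i$, since wedge products and sums of forms on $\textbf{M}$ are computed componentwise. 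Well-definedness of $d\boldsymbol{\omega}$ as an element of the fibre product is the identity $\iota_{ij}^*(d\omega_i)=f_{ij}^*(d\omega_j)$ recorded just before the theorem.

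For uniqueness, let $\tilde d$ be any ($\mathbb{R}$-linear) operator on $\Omega^*(\textbf{M})$ satisfying (1)--(4); I will show $\tilde d\boldsymbol{\omega}=d\boldsymbol{\omega}$ for every $\boldsymbol{\omega}$ by verifying it at each point $p=[x,i]\in M_i$. Put $B=\bigcup_{j\neq i}\partial^{M_i}(M_{ij})$, the set of Hausdorff-violating points arising from $M_i$; topological boundaries of open sets are closed and nowhere dense, and $I$ is finite, so $B$ is closed and nowhere dense in $M_i$. Suppose first that $p\notin B$. Then $p$ has a coordinate ball $U$ with compact closure contained in the open set $M_i\setminus B$, and since $\overline U$ is then positively separated from each $\partial^{M_i}(M_{ij})$, a bump function on $M_i$ supported in $U$ and equal to $1$ near $p$ extends by zero -- à la Lemma \ref{LEM: functions on M} -- to a global function $\rho\in C^\infty(\textbf{M})$. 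One now runs the standard Hausdorff uniqueness argument verbatim: writing $\boldsymbol{\omega}=\sum_I a_I\,dx^{i_1}\wedge\cdots\wedge dx^{i_k}$ on $U$, form the globally defined $k$-form obtained by inserting $\rho$ into the coefficients $a_I$ and the coordinate functions $x^a$, compute $\tilde d$ of it using (4), then (3) (which kills the $d(\rho x^a)$ factors), then (2), and finally invoke locality (1) together with $\rho\equiv 1$ near $p$ to obtain $\tilde d\boldsymbol{\omega}(p)=d\boldsymbol{\omega}(p)$. Hence $\tilde d\boldsymbol{\omega}$ and $d\boldsymbol{\omega}$ agree on the dense subset $M_i\setminus B$ of $M_i$; since the vanishing locus of the smooth form $\tilde d\boldsymbol{\omega}-d\boldsymbol{\omega}$ is closed, they agree on all of $M_i$, and as the $M_i$ cover $\textbf{M}$ we are done.

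The crux, and the one genuinely non-Hausdorff point, is this last step. The classical bump-function proof of uniqueness of $d$ is simply unavailable at a Hausdorff-violating point $p$, because any global function equal to $1$ near $p$ is, being continuous, also equal to $1$ near the point inseparable from it, so it cannot be supported inside a single Hausdorff chart. The way around this is to prove the identity first on the non-Hausdorff-violating locus -- where bump functions, supports, and extensions behave exactly as in the Hausdorff theory -- and then to propagate it to the Hausdorff-violating points by density and continuity, which is the same device used in Lemmas \ref{LEM: derivations are semi-local} and \ref{LEM: derivations are local operators}. A secondary issue worth flagging is verifying that the auxiliary bump functions really do extend to all of $\textbf{M}$: this works precisely because their supports may be taken disjoint from (indeed, positively separated from) the gluing boundaries $\partial^{M_i}(M_{ij})$, so that the extension-by-zero of Lemma \ref{LEM: functions on M} goes through without obstruction.
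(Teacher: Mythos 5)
Your proof is correct. The existence half is the same componentwise verification as the paper's, but your uniqueness argument takes a genuinely different route. The paper restricts the hypothetical operator $\tilde d$ to each $M_i$ (well-defined by locality, since $M_i$ is open), observes that properties (2)--(4) descend to the restriction, and then invokes the classical Hausdorff uniqueness theorem on each $M_i$; this is shorter, but it quietly relies on every form on $M_i$ admitting \emph{some} extension to a global form on $\textbf{M}$ so that the restricted operator is actually defined on all of $\Omega^*(M_i)$ (an analogue of Lemma \ref{LEM: functions on M}(2) for forms, which the paper does not spell out). You instead run the classical bump-function computation directly on $\textbf{M}$, which forces you to confront the genuinely non-Hausdorff obstruction head-on: a bump function equal to $1$ near a Hausdorff-violating point cannot be supported in a single chart, so you first establish the identity on the dense open complement of $B=\bigcup_{j\neq i}\partial^{M_i}(M_{ij})$ -- where compactly supported bump forms extend by zero past the gluing boundaries without issue -- and then propagate to $B$ by continuity, exactly the device of Lemmas \ref{LEM: derivations are semi-local} and \ref{LEM: derivations are local operators}. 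The trade-off is clear: the paper's argument is slicker but needs a global extension property for arbitrary forms; yours only ever extends forms supported away from the gluing boundaries, at the cost of an extra density step. Both are sound.
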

    \begin{proof}
        The locality of $d$ may be argued in a similar manner to that of Lemma \ref{LEM: derivations are local operators}. Indeed, suppose that we have some differential form $\boldsymbol{\omega}$ on $\textbf{M}$ that vanishes on an open subset $U$. For any $[x,i]$ in $U\cap M_i$ we have that $\boldsymbol{\omega}([x,i]) = \omega_i(x)$. In particular, $\omega_i$ will vanish on the open set $U\cap M_i$. Since the exterior derivative on $M_i$ is a local operator, we have that $d\boldsymbol{\omega} ([x,i]) = d\omega_i (x) = 0$. Suppose now that $\textbf{X}$ is some vector field on $\textbf{M}$. Then: $$(d\textbf{r})(\textbf{X}) = ((dr_1)(X_1), \cdots , (dr_n)(X_n)) = (X_1(r_1), \cdots X_n(r_n)) = \textbf{X}(\textbf{r}). $$
        Moreover, the exterior derivative on $\textbf{M}$ satisfies properties (iii) and (iv) above, since the exterior derivatives on $M_i$ do. To see that $d$ is unique with respect to these properties, suppose that there is some other operator $\tilde{d}$ on $\Omega^*(\textbf{M})$ satisfying items (i)--(iv) above. Since $\tilde{d}$ is local, we may restrict it to each $M_i$ to obtain an operator acting on $\Omega^*(M_i)$. The properties (ii)--(iv) are preserved under this restriction, and thus we may conclude that $d=\tilde{d}$ on each $M_i$. The global result then follows from the fact that $d\boldsymbol{\omega}([x,i]) = d\omega_i(x) = \tilde{d}\omega_i(x) = \tilde{d}\boldsymbol{\omega}([x,i])$ for all $i$ in $I$ and all $\boldsymbol{\omega}$ in $\Omega^*(\textbf{M})$.
     \end{proof}

\subsection{Integration on non-Hausdorff Manifolds}
Before getting to a definition of integration, we will first briefly comment on the existence of orientations on $\textbf{M}$. Globally speaking, an orientation can be seen as a non-vanishing top form, which is a section of the bundle $\bigwedge^d T^*(\textbf{M})$. According to Theorem \ref{THM: sections are fibred product}, any orientation on $\textbf{M}$ is induced from a series of compatible orientations on the submanifolds $M_i$. This idea can be restated as follows.
    \begin{lemma}\label{LEM: orientable manifold}
        Let $\textbf{M}$ be a colimit of Hausdorff manifolds, in accordance with \ref{REM: assumptions in this paper}. If each $M_i$ is oriented, and the gluing maps $f_{ij}$ are orientation-preserving, then $\textbf{M}$ admits an orientation under which the maps $\phi_i:M_i \rightarrow \textbf{M}$ become orientation-preserving, smooth open embeddings. 
    \end{lemma}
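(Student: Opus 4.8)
The plan is to build the orientation on $\textbf{M}$ directly from the given local data using the fibre-product description of sections of $\bigwedge^d T^*\textbf{M}$ provided by Theorem \ref{THM: sections are fibred product}, and then to check that the resulting top form is nowhere-vanishing and has the desired compatibility with the canonical embeddings. First I would fix, for each $i$, a smooth nowhere-vanishing top form $\mu_i \in \Omega^d(M_i)$ representing the given orientation of $M_i$. These need not be globally compatible in the sense required by Theorem \ref{THM: sections are fibred product} — we only know the $f_{ij}$ are orientation-preserving, not that $f_{ij}^*\mu_j = \mu_i$ — so the first substantive step is to correct them. On each overlap $M_{ij}$ we have $\iota_{ij}^*\mu_i = g_{ij}\, f_{ij}^*\mu_j$ for a strictly positive smooth function $g_{ij}$ on $M_{ij}$ (positivity is exactly the orientation-preserving hypothesis), and these $g_{ij}$ satisfy a multiplicative cocycle condition on triple overlaps $M_{ijk}$ coming from condition \textbf{A3)} of Definition \ref{DEF: Adjunctive System}.

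The heart of the argument, and the step I expect to be the main obstacle, is to produce from this positive cocycle $(g_{ij})$ a family of strictly positive smooth functions $h_i \in C^\infty(M_i)$ with $h_i = g_{ij}\,(h_j \circ f_{ij})$ on each $M_{ij}$, so that the rescaled forms $\mu_i' := h_i^{-1}\mu_i$ satisfy $\iota_{ij}^*\mu_i' = f_{ij}^*\mu_j'$ and hence, by Theorem \ref{THM: sections are fibred product}, glue to a global section $\boldsymbol{\mu} \in \Omega^d(\textbf{M})$. Passing to logarithms turns this into the additive problem of splitting an $\mathbb{R}$-valued cocycle, i.e. a vanishing-of-$\check{H}^1$ statement; in the Hausdorff case this is handled by a partition of unity, which Lemma \ref{LEM: no partitions of unity} forbids here. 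The clean way around this is the inductive construction of $\textbf{M}$ recalled before Section 1.2: treat $\textbf{M}$ as a binary adjunction of $\textbf{N}$ (built from $M_1,\dots,M_n$) with $M_{n+1}$, assume inductively that $\textbf{N}$ carries an orientation $\boldsymbol{\mu}_{\textbf N}$ extending the $M_i$-orientations, and reduce to the two-piece case. For two pieces one does not need a partition of unity at all: one has a single positive function $g$ on the gluing region $A$, one uses Remark \ref{REM: assumptions in this paper} — the closure $Cl^{M_{n+1}}(A)$ is a smooth submanifold with boundary and $\overline{f}$ is a diffeomorphism onto its image — to transport $g$ across, and then one invokes the extension-by-a-constant technique underlying Lemma \ref{LEM: functions on M}(2): extend $\log g$ smoothly off a collar of the boundary so that it becomes eventually constant (say $0$) away from $A$, giving the required $h_{n+1}$ on all of $M_{n+1}$ while leaving $\boldsymbol\mu_{\textbf N}$ untouched. (One should be slightly careful that extending preserves strict positivity, which is automatic since the exponential is positive and the extension interpolates between positive values.)

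Once the glued form $\boldsymbol{\mu}$ exists, the remaining checks are routine. It is nowhere vanishing: any point of $\textbf{M}$ lies in some $\phi_i(M_i)$, and there $\phi_i^*\boldsymbol{\mu} = \mu_i' = h_i^{-1}\mu_i$, which is nonzero because $h_i > 0$ and $\mu_i$ is nowhere zero. A nowhere-vanishing $d$-form on a $d$-manifold is an orientation, so $\textbf{M}$ is orientable with $[\boldsymbol{\mu}]$ the chosen orientation. Finally, by construction $\phi_i^*\boldsymbol{\mu}$ is a positive multiple of $\mu_i$, so $\phi_i$ pulls the orientation of $\textbf{M}$ back to the given orientation of $M_i$; combined with Lemma \ref{LEM: non-Haus smooth manifold} (which already makes the $\phi_i$ smooth open embeddings), this says precisely that each $\phi_i$ is an orientation-preserving smooth open embedding, completing the proof. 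I would also remark that the orientation so obtained is the unique one making all the $\phi_i$ orientation-preserving, since the $\phi_i(M_i)$ cover $\textbf{M}$ and an orientation is determined by its restriction to an open cover.
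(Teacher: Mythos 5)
Your proof is correct, and it is in fact more complete than what the paper offers: the paper states this lemma as a direct paraphrase of Theorem \ref{THM: sections are fibred product} (``any orientation on $\textbf{M}$ is induced from a series of compatible orientations on the submanifolds''), with no further argument. That phrasing quietly assumes that one can choose representative top forms $\mu_i$ satisfying the exact gluing condition $\iota_{ij}^*\mu_i = f_{ij}^*\mu_j$, whereas the hypothesis only gives compatibility of orientations, i.e.\ equality up to a positive function $g_{ij}$. You correctly isolate this as the real content of the lemma, and your resolution is sound: the multiplicative cocycle $(g_{ij})$ must be split without a partition of unity subordinate to $\{M_i\}$ (which Lemma \ref{LEM: no partitions of unity} rules out), and the inductive binary reduction sidesteps this because in the two-piece case one only needs to extend a single positive function from the gluing region into $M_{n+1}$ — an extension that lives entirely inside a Hausdorff manifold, where the assumptions of Remark \ref{REM: assumptions in this paper} guarantee the ratio extends smoothly and positively to the closure of the gluing region, and the extension technique of Lemma \ref{LEM: functions on M} applies. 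The normalisation $\phi_i^*\boldsymbol{\mu} = h_i^{-1}\mu_i$ with $h_i>0$ then gives orientation-preservation of the $\phi_i$ exactly as you say. The one thing I would make explicit is the boundary behaviour: the extension of $\log g$ off $\overline{A}$ requires $g$ to be bounded away from $0$ and $\infty$ at $\partial A$, which is where the diffeomorphic-boundaries assumption (the extended maps $\overline{f_{ij}}$ being diffeomorphisms of closures) is genuinely used rather than merely convenient. With that spelled out, your argument stands as a full proof of a statement the paper leaves unproved.
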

On a Hausdorff manifold, the integration of compactly-supported differential forms is typically defined in terms of local charts, and then the total result is patched together using a partition of unity. According to Lemma \ref{LEM: no partitions of unity} we do not have access to partitions of unity subordinate to arbitrary open covers, so we cannot use this definition in its desired generality. Instead, we will use the so-called ``integration over parameterisations". In this approach an overall integral is broken down into smaller pieces that intersect on measure zero sets. The following definition is an adaptation of that found in \cite[§16]{lee2013smooth}.
    \begin{definition}\label{DEF: Integration by parametrisation}
    Let $\textbf{M}$ be a $d$-dimensional non-Hausdorff manifold defined according to \ref{REM: assumptions in this paper} and oriented in the sense of \ref{LEM: orientable manifold}, and let $\boldsymbol{\omega}$ be a compactly-supported differential form on $\textbf{M}$. Suppose that we are given a finite collection $\{A_\alpha\}$ of open domains of integration in $\mathbb{R}^d$, together with a collection of maps $a_\alpha: \overline{A_\alpha}\rightarrow \textbf{M}$ such that: 
    \begin{enumerate}
        \item each $a_\alpha$ is an orientation-preserving diffeomorphism from $A_\alpha$ onto an open subset $U_{\alpha}$ in $\textbf{M}$, 
        \item the sets $U_\alpha$ are pairwise disjoint, and
        \item $\text{supp}(\boldsymbol{\omega}) := \overline{\{ [x,i] \in \textbf{M} \ | \ \boldsymbol{\omega}([x,i])=0 \} }\subseteq \bigcup_{\alpha} \overline{U_\alpha}$.
    \end{enumerate}
    We then define the integral of $\boldsymbol{\omega}$ in $\textbf{M}$ to be $$ \int_{\textbf{M}} \boldsymbol{\omega} = \sum_\alpha \int_{A_{\alpha}} a_i^*\boldsymbol{\omega}.   $$
    \end{definition}
Before deriving alternate descriptions of integration over $\textbf{M}$, we will first confirm that the above definition is well-defined. 
    \begin{proposition}\label{PROP: integration welldefined}
        The integral defined in Definition \ref{DEF: Integration by parametrisation} is independent of the particular choice of the parameterizing sets. 
    \end{proposition}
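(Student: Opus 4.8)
The plan is to reduce the well-definedness statement to the classical Hausdorff result by exploiting the fibre-product description of differential forms and the fact that the Hausdorff-violating locus has measure zero. Concretely, suppose we are given two families of parameterizations $\{(A_\alpha, a_\alpha)\}$ and $\{(B_\beta, b_\beta)\}$ both satisfying conditions (1)--(3) of Definition \ref{DEF: Integration by parametrisation} for the same compactly-supported form $\boldsymbol{\omega}$. I would first pass to a common refinement: consider the pairwise intersections $U_\alpha \cap V_\beta$ in $\textbf{M}$ (where $V_\beta = b_\beta(B_\beta)$), and pull them back to open subsets $A_{\alpha\beta} := a_\alpha^{-1}(U_\alpha \cap V_\beta) \subseteq A_\alpha$ and $B_{\alpha\beta} := b_\beta^{-1}(U_\alpha \cap V_\beta) \subseteq B_\beta$. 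The transition map $b_\beta^{-1}\circ a_\alpha : A_{\alpha\beta}\to B_{\alpha\beta}$ is an orientation-preserving diffeomorphism between open subsets of $\mathbb{R}^d$, so the usual change-of-variables formula gives $\int_{A_{\alpha\beta}} a_\alpha^*\boldsymbol{\omega} = \int_{B_{\alpha\beta}} b_\beta^*\boldsymbol{\omega}$.

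Next I would argue that the "leftover" pieces contribute nothing. Each $A_\alpha$ decomposes (up to a set that maps into $\textbf{M}\setminus \bigcup_\beta V_\beta$) as the union of the $A_{\alpha\beta}$ over $\beta$, and the overlaps $A_{\alpha\beta}\cap A_{\alpha\beta'}$ for $\beta\neq\beta'$ are pulled back from $V_\beta\cap V_{\beta'}$, which has empty interior since the $V_\beta$ are pairwise disjoint; hence these overlaps are measure-zero in $\mathbb{R}^d$ and do not affect the integral. The one genuinely non-Hausdorff point is the set $S := \overline{A_\alpha}\setminus\bigcup_\beta A_{\alpha\beta}$ of parameters landing outside $\bigcup_\beta \overline{V_\beta}$: I claim $\boldsymbol{\omega}$ vanishes on the interior of $a_\alpha(S)$. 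Indeed, if $p=a_\alpha(x)$ lies in $U_\alpha$ but in no $\overline{V_\beta}$, then by condition (3) applied to the $\{b_\beta\}$ family, $p\notin \mathrm{supp}(\boldsymbol{\omega})$, so $\boldsymbol{\omega}$ vanishes on a neighbourhood of $p$, hence $a_\alpha^*\boldsymbol{\omega}$ vanishes near $x$. Therefore $\int_{A_\alpha} a_\alpha^*\boldsymbol{\omega} = \sum_\beta \int_{A_{\alpha\beta}} a_\alpha^*\boldsymbol{\omega}$, and symmetrically $\int_{B_\beta} b_\beta^*\boldsymbol{\omega} = \sum_\alpha \int_{B_{\alpha\beta}} b_\beta^*\boldsymbol{\omega}$. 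Summing the refined change-of-variables identities over all $\alpha,\beta$ then yields $\sum_\alpha \int_{A_\alpha} a_\alpha^*\boldsymbol{\omega} = \sum_{\alpha,\beta}\int_{A_{\alpha\beta}}a_\alpha^*\boldsymbol{\omega} = \sum_{\alpha,\beta}\int_{B_{\alpha\beta}}b_\beta^*\boldsymbol{\omega} = \sum_\beta \int_{B_\beta}b_\beta^*\boldsymbol{\omega}$, which is the desired equality.

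The main obstacle I anticipate is bookkeeping the boundary/closure issues carefully: Definition \ref{DEF: Integration by parametrisation} uses maps defined on the closures $\overline{A_\alpha}$ that are diffeomorphisms only on the open interiors $A_\alpha$, so one must check that $a_\alpha^{-1}(U_\alpha\cap V_\beta)$ is genuinely open, that $\partial A_\alpha$ and the various overlap sets are Lebesgue-null, and that "domain of integration" (in the sense of \cite[§16]{lee2013smooth}, i.e. a bounded open set whose boundary has measure zero) is preserved under the constructions above — in particular that each $A_{\alpha\beta}$ is again a domain of integration. None of this is deep, but it is where the non-Hausdorff subtlety could hide: I would want to confirm that $U_\alpha\cap V_\beta$, being an open subset of $\textbf{M}$ diffeomorphic (via $a_\alpha$) to an open subset of $\mathbb{R}^d$, is automatically Hausdorff, so that all the local analysis genuinely reduces to Euclidean change-of-variables with no interference from Hausdorff-violating points. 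The compact-support hypothesis guarantees the sums are finite, so no convergence issues arise.
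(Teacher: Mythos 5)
Your proposal is correct and follows essentially the same route as the paper's proof: both pass to the common refinement by the pairwise intersections $U_\alpha\cap V_\beta$, apply the Euclidean change-of-variables on each piece, and exchange the order of summation. Your version is in fact somewhat more careful than the paper's, which silently absorbs the measure-zero overlaps and the leftover set outside $\bigcup_\beta \overline{V_\beta}$ that you treat explicitly via condition (3).
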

    \begin{proof}
        Suppose that $b_\beta:\overline{B_\beta}\rightarrow \textbf{M}$ is another collection of sets and maps satisfying the conditions of Definition \ref{DEF: Integration by parametrisation}, and denote by $V_\beta$ the image of $B_\beta$ under $b_\beta$. For each $U_\alpha$, we have that $$ \text{supp}(\boldsymbol{\omega}) \cap \overline{U_{\alpha}} \subseteq \overline{U_{\alpha}} \cap \bigcup_\beta \overline{V_{\beta}} = \bigcup_{\beta} \overline{U_{\alpha}} \cap\overline{V_{\beta}}.$$ As such, we may consider the sets of the form $A_{\alpha}\cap B_{\beta}$ as a parametrized set covering $A_\alpha$. This means that we can break down the integral of $\iota_\alpha^*\boldsymbol{\omega}$ over $A_\alpha$ into the sum: $$ \int_{A_{\alpha}} a_i^*\boldsymbol{\omega} = \sum_\beta \int_{A_{\alpha}\cap B_{\beta}} \iota_{\alpha\beta}^* a_i^*\boldsymbol{\omega}, $$ where the map $\iota_{\alpha\beta}$ is the inclusion of the set $A_{\alpha}\cap B_{\beta}$ into $A_\alpha$. We may repeat the same reasoning for fixed $V_\beta$ and break the integral of $b_\beta^*\boldsymbol{\omega}$ into a sum of integrals defined on the pairwise intersections $A_{\alpha}\cap B_{\beta}$. From this we may conclude that
        \begin{align*}
            \sum_\alpha \int_{A_{\alpha}} a_\alpha^*\boldsymbol{\omega} & =\sum_\alpha \sum_\beta \int_{A_{\alpha}\cap B_{\beta}} \iota_{\alpha\beta}^* a_\alpha^*\boldsymbol{\omega} \\
            & = \sum_\beta \sum_\alpha \int_{A_{\alpha}\cap B_{\beta}} \iota_{\beta\alpha}^* b_\beta^*\boldsymbol{\omega} \\
            & = \sum_\beta \int_{B_{\beta}} b_\beta^*\boldsymbol{\omega}
        \end{align*}
        as required.  
    \end{proof} 
        
Proposition 16.8 of \cite{lee2013smooth} ensures that Definition \ref{DEF: Integration by parametrisation} recovers the ordinary integral for Hausdorff manifolds. Moreover, by some arguments of point-set topology it can be shown that the pullback of a compactly-supported form by the maps $\phi_i$ will still be compactly-supported in each $M_i$ -- see Lemma \ref{LEM: restriction of compactly supported form is still compactly supported} in the Appendices for the details. We will now use this observation in conjunction with Definition \ref{DEF: Integration by parametrisation} to describe the total integral for $\textbf{M}$ by decomposing it into ordinary integrals defined over the submanifolds $M_i$.

   \begin{theorem}\label{THM: subadditivity for integration}
    Let $\textbf{M}$ be an oriented non-Hausdorff manifold, constructed as a binary adjunction of $M_1$ and $M_2$ according to \ref{REM: assumptions in this paper} and \ref{LEM: orientable manifold}. For all compactly-supported forms $\boldsymbol{\omega}$ on $\textbf{M}$, integration satisfies the sub-additive equality: $$ \int_{\textbf{M}} \boldsymbol{\omega} = \int_{M_1} \omega_1 + \int_{M_2} \omega_2 - \int_{\overline{M_{12}}} \omega_{12}$$ where here $\overline{M_{12}}$ denotes the closure of $M_{12}$ in $M_1$.
   \end{theorem}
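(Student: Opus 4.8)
The plan is to use Definition \ref{DEF: Integration by parametrisation} directly, building a single family of parameterizing sets for $\textbf{M}$ out of carefully chosen parameterizations of $M_1$ and $M_2$, and then bookkeeping the overlap. First I would fix a countable cover of $M_1$ by domains of integration via oriented charts whose images are pairwise disjoint up to measure zero (this is exactly the device behind Proposition 16.8 of \cite{lee2013smooth} that makes the Hausdorff integral well-defined); I would choose this cover so that each chart domain either lies inside $M_{12}$ or is disjoint from $\overline{M_{12}}$, which is possible since $\overline{M_{12}}$ is a regular domain with smooth boundary by Remark \ref{REM: assumptions in this paper}, so its boundary is measure zero and charts can be refined to respect it. Do the same for $M_2$, choosing the charts inside $M_{21}$ to be precisely the $f_{12}$-images of the charts chosen inside $M_{12}$ (legitimate since $f_{12}$ is an orientation-preserving diffeomorphism by Lemma \ref{LEM: orientable manifold}). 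Passing everything through the open embeddings $\phi_1,\phi_2$ gives open subsets of $\textbf{M}$; the ones coming from $M_1\setminus \overline{M_{12}}$, from $M_2\setminus\overline{M_{21}}$, and from the single identified copy of $M_{12}$ together form a parameterizing family for $\textbf{M}$, and by Lemma \ref{LEM: restriction of compactly supported form is still compactly supported} the support condition (3) is met after discarding all but finitely many pieces.

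Next I would compute both sides against this family. By Proposition \ref{PROP: integration welldefined}, $\int_{\textbf{M}}\boldsymbol{\omega}$ equals the sum of the chart integrals over this family. Grouping the pieces: the $M_1$-charts disjoint from $\overline{M_{12}}$ contribute $\int_{M_1\setminus\overline{M_{12}}}\omega_1$, the $M_2$-charts disjoint from $\overline{M_{21}}$ contribute $\int_{M_2\setminus\overline{M_{21}}}\omega_2$, and the shared $M_{12}$-charts contribute $\int_{M_{12}}\omega_{12}$, where $\omega_{12}=\iota_{12}^*\omega_1 = f_{12}^*\omega_2$ by the fibre-product description of forms. Meanwhile, applying the Hausdorff integral (Definition \ref{DEF: Integration by parametrisation} restricted to $M_i$, which agrees with the ordinary integral) to $\omega_1$ using the $M_1$-charts gives $\int_{M_1}\omega_1 = \int_{M_1\setminus\overline{M_{12}}}\omega_1 + \int_{M_{12}}\omega_1|_{M_{12}}$, since $\overline{M_{12}}\setminus M_{12} = \partial^{M_1}M_{12}$ is measure zero and hence contributes nothing. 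Similarly $\int_{M_2}\omega_2 = \int_{M_2\setminus\overline{M_{21}}}\omega_2 + \int_{M_{21}}\omega_2|_{M_{21}}$, and the change-of-variables formula for the orientation-preserving diffeomorphism $f_{12}$ identifies $\int_{M_{21}}\omega_2|_{M_{21}} = \int_{M_{12}}f_{12}^*\omega_2 = \int_{M_{12}}\omega_{12} = \int_{\overline{M_{12}}}\omega_{12}$ (again the boundary is null). Substituting, $\int_{M_1}\omega_1 + \int_{M_2}\omega_2 - \int_{\overline{M_{12}}}\omega_{12}$ collapses to exactly the three-group sum computed for $\int_{\textbf{M}}\boldsymbol{\omega}$.

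The main obstacle I anticipate is the compatibility of parameterizations across the non-Hausdorff seam: I need the $M_2$-charts near $\overline{M_{21}}$ to glue with the $M_1$-charts near $\overline{M_{12}}$ so that their $\phi_i$-images land on genuinely disjoint open subsets of $\textbf{M}$ (rather than overlapping in a thickened neighbourhood of the Hausdorff-violating boundary). The clean way to handle this is to never let a parameterizing set straddle the boundary $\partial^{M_1}M_{12}$: refine so that every chart domain is on one side, use the fact that $\overline{M_{12}}$ is a regular domain so this refinement is achievable without affecting the integral, and then the only shared pieces are those wholly inside $M_{12}$, where $\phi_1$ and $\phi_2\circ f_{12}$ literally coincide by construction of $\textbf{M}$. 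A secondary technical point — that pulling a compactly supported $\boldsymbol{\omega}$ back to $M_i$ stays compactly supported so that only finitely many charts matter — is exactly Lemma \ref{LEM: restriction of compactly supported form is still compactly supported}, which I invoke rather than reprove. Everything else is the standard measure-zero-overlap argument already used to prove Proposition \ref{PROP: integration welldefined}.
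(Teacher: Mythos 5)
Your proposal is correct and follows essentially the same route as the paper: split $M_1$ and $M_2$ into the part outside the gluing region and the gluing region itself, share the latter piece across the seam via the identity $\phi_1\circ\iota_{12}=\phi_2\circ\overline{f_{12}}$, and invoke Proposition \ref{PROP: integration welldefined} to equate the two ways of parametrizing $\textbf{M}$. The only cosmetic difference is that the paper parametrizes the closed set $\overline{M_{12}}$ directly (precisely so that the restricted form stays compactly supported), whereas you parametrize the open $M_{12}$ and discard the measure-zero boundary; both give the same value.
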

    \begin{proof}
We would first like to describe a particular parametrisation of $\textbf{M}$ in terms of parametrisations of the $M_i$ and $M_{12}$. Suppose that:
    \begin{itemize}
        \item $\{a_i:A_i\rightarrow M_1\}$ is some parametrisation of the set $M_1\backslash M_{12}$, 
        \item $\{b_i:B_i\rightarrow M_2\}$ is some parametrisation of the set $M_2\backslash M_{12}$, and 
        \item $\{c_i:C_i\rightarrow \overline{M_{12}}\}$ is some parametrisation of the closed set $\overline{M_{12}}$, viewed as a subspace of $M_1$. 
    \end{itemize}  
   Observe that the collection $\{A_i\} \cup \{C_i\}$ forms a parametrisation of the entire manifold $M_1$, and similarly $\{B_i\} \cup \{\tilde{C}_i\}$ for $M_2$, where here $\tilde{C}_i$ denotes the transfer of the parametrization $C_i$  into $M_2$ using the diffeomorphism $\overline{f_{12}}$. In particular, this means that $$ \int_{M_1} \eta = \sum \left( \int_{A_i} a_i^* \eta \right) + \sum \left( \int_{C_i} ( \iota_{12}\circ c_i )^* \eta \right)= \sum \left( \int_{A_i} a_i^* \eta \right) + \sum \left( \int_{C_i}  c_i^* \circ\iota_{12}^* \eta \right)   $$
    for any differential form $\eta$ on $M_1$. Equivalently, this integral can be seen as a restriction of $\eta$ into the two sets $\overline{M_{12}}$ and $M_1 \backslash M_{12}$, followed by an integration over the respective parametrizations of these two spaces. We remark at this stage that the inclusion of the boundary of $M_{12}$ is necessary, as otherwise the restriction $\iota_{12}^*\eta$ need not be compactly-supported.\footnote{For example, suppose that $M_1 = \mathbb{R}$ and $M_{12} = (-\infty, 0)$, and consider a bump $1$-form in $M_1$ centered at the origin. The restriction of this form to $M_{12}$ will take support on some interval $[-a, 0)$, which is not compact in $M_{12}$. However, the interval $[-a,0]$ is compact in $\overline{M_{12}} = (\infty, 0]$.} Similarly, we have that: 
    $$ \int_{M_2} \mu = \sum \left( \int_{B_i} b_i^* \mu \right) + \sum \left( \int_{C_i} ( \overline{f_{ij}}\circ c_i)^* \mu \right) = \sum \left( \int_{B_i} b_i^* \mu \right) + \sum \left( \int_{C_i}  c_i^* \circ\overline{f_{ij}}^* \mu \right)    $$
    for any differential form $\mu$ on $M_2$. Since the maps $\phi_i$ are smooth, orientation-preserving open embeddings, the collection $$  \{\phi_1(A_i)\} \cup \{\phi_2(B_i)\} \cup \{\phi_1\circ \iota_{12}(C_i)\} $$ forms a parametrisation of $\textbf{M}$. According to Proposition \ref{PROP: integration welldefined}, the integral of any $\boldsymbol{\omega}$ can be computed according to this particular parametrization. This yields:
{\small
\begin{align*}
        \int_{\textbf{M}} \boldsymbol{\omega} & = \sum \int_{A_i} (\phi_1 \circ a_i)^*\boldsymbol{\omega} + \sum \int_{B_i} (\phi_2 \circ b_i)^*\boldsymbol{\omega} + \sum \int_{C_i} (  \phi_1  \circ \iota_{12}\circ c_i)^*\boldsymbol{\omega} \\
        &  \ \ \ \ \ \ \ \ + \sum \int_{C_i} ( \phi_1 \circ \iota_{12} \circ  c_i )^*\boldsymbol{\omega} -  \sum \int_{C_i} (  \phi_1 \circ \iota_{12}\circ c_i )^*\boldsymbol{\omega}  \\
        & = \sum \int_{A_i} (\phi_1 \circ a_i)^*\boldsymbol{\omega} + \sum \int_{B_i} (\phi_2 \circ b_i)^*\boldsymbol{\omega} + \sum \int_{C_i} (  \phi_1  \circ \iota_{12}\circ c_i)^*\boldsymbol{\omega} \\
        &  \ \ \ \ \ \ \ \ + \sum \int_{C_i} ( \phi_2 \circ \overline{f_{12}} \circ  c_i )^*\boldsymbol{\omega} -  \sum \int_{C_i} (  \phi_1 \circ \iota_{12}\circ c_i )^*\boldsymbol{\omega}  \\
        & = \left(\sum \int_{A_i} (\phi_1 \circ a_i)^*\boldsymbol{\omega} + \sum \int_{C_i} (  \phi_1  \circ \iota_{12}\circ c_i)^*\boldsymbol{\omega} \right) \\
        & + \left(\sum \int_{B_i} (\phi_2 \circ b_i)^*\boldsymbol{\omega} + \sum \int_{C_i} ( \phi_2 \circ \overline{f_{12}} \circ  c_i )^*\boldsymbol{\omega}   \right)  - \left( \sum \int_{C_i} (  \phi_1 \circ \iota_{12}\circ c_i )^*\boldsymbol{\omega}  \right) \\
        & = \int_{M_1} \phi_1^*\boldsymbol{\omega} + \int_{M_2} \phi_2^*\boldsymbol{\omega} - \int_{\overline{M_{12}}} \phi_{12}^*\boldsymbol{\omega}
    \end{align*}
}%
\noindent where here in the second equality we use the commutative property $\phi_1 \circ \iota_{12} = \phi_2 \circ \overline{f_{12}}$.
    \end{proof}

We can extend this representation to manifolds $\textbf{M}$ which are finite colimits. In this case, we will get a more-general alternating sum that keeps count of the number of sets being used in each possible intersection. As a shorthand notation, we will write $M_{i_1 \cdots i_p}$ for the set $M_1 \cap \cdots \cap M_p$. 
    \begin{theorem}\label{THM: Inductive form of integral}
        Let $\textbf{M}$ be an oriented non-Hausdorff manifold, built as the colimit of $n$-many manifolds $M_i$ in accordance with \ref{REM: assumptions in this paper}. The integral of a differential form $\boldsymbol{\omega}$ over $\textbf{M}$ satisfies: 
        $$ \int_{\textbf{M}} \boldsymbol{\omega} = \sum_{i=1}^n \left(\int_{M_i} \phi_i^*\boldsymbol{\omega}\right) -  \sum_{p=2}^n(-1)^{p}\left(\sum_{\substack{ i_1,\cdots, i_p \in I \\i_1 < \cdots < i_p}} \int_{\overline{M_{i_1 \cdots i_p}}} \phi_{{i_1 \cdots i_p}}^*\boldsymbol{\omega} \right)    $$
    \end{theorem}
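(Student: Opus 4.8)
The plan is to prove the statement by induction on $n$, using the inductive construction of $\textbf{M}$ described in Section 1.1 and the binary case already established in Theorem \ref{THM: subadditivity for integration}. The base case $n=1$ is trivial, and the case $n=2$ is exactly Theorem \ref{THM: subadditivity for integration}, noting that the sum over $p$ only contributes the $p=2$ term $-(-1)^2\int_{\overline{M_{12}}}\phi_{12}^*\boldsymbol{\omega} = -\int_{\overline{M_{12}}}\phi_{12}^*\boldsymbol{\omega}$, matching the stated formula.

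For the inductive step, I would write $\textbf{M}$ as the binary adjunction of $\textbf{N}$ (the colimit of $M_1,\dots,M_n$) with $M_{n+1}$, glued along $A = \bigcup_{i\leq n} M_{i(n+1)}$ as in Section 1.1. Applying Theorem \ref{THM: subadditivity for integration} to this binary decomposition gives
$$ \int_{\textbf{M}} \boldsymbol{\omega} = \int_{\textbf{N}} \boldsymbol{\omega}|_{\textbf{N}} + \int_{M_{n+1}} \phi_{n+1}^*\boldsymbol{\omega} - \int_{\overline{A}} \boldsymbol{\omega}|_{\overline{A}}. $$
I would then apply the inductive hypothesis to $\int_{\textbf{N}} \boldsymbol{\omega}|_{\textbf{N}}$, producing the sum over single sets $M_i$ ($i\leq n$) and alternating sums over the intersections $\overline{M_{i_1\cdots i_p}}$ with all indices $\leq n$. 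The remaining work is to analyze the term $\int_{\overline{A}}\boldsymbol{\omega}|_{\overline{A}}$: since $A = \bigcup_{i\leq n} M_{i(n+1)}$ is a union of $n$-many open submanifolds of $M_{n+1}$, viewing $\overline{A}$ with the parametrisation machinery of Definition \ref{DEF: Integration by parametrisation} and applying the inclusion–exclusion bookkeeping one more time expresses $\int_{\overline{A}}\boldsymbol{\omega}|_{\overline{A}}$ as an alternating sum of integrals over $\overline{M_{(n+1)i_1\cdots i_p}} = \overline{M_{i_1\cdots i_p (n+1)}}$. Combining the three pieces and recollecting terms by the size $p$ of the index set — separating those intersections that do from those that do not involve the index $n+1$ — should reproduce the claimed formula for $n+1$.

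The main obstacle I anticipate is the careful sign and multiplicity bookkeeping in the inclusion–exclusion step for $\int_{\overline{A}}\boldsymbol{\omega}|_{\overline{A}}$, together with correctly matching closures: one must check that $Cl^{M_{n+1}}(M_{i(n+1)}) \cap Cl^{M_{n+1}}(M_{j(n+1)})$ is (up to measure zero, which is all that matters for integration) the closure of $M_{ij(n+1)}$, so that the parametrisation of $\overline{A}$ by the pieces $\overline{M_{i_1\cdots i_p(n+1)}}$ is legitimate. This relies on the regular-domain and diffeomorphic-boundary hypotheses of Remark \ref{REM: assumptions in this paper}, which guarantee that boundaries are nicely embedded submanifolds and hence measure zero. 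A secondary technical point is confirming that $\boldsymbol{\omega}|_{\textbf{N}}$ and $\boldsymbol{\omega}|_{\overline{A}}$ remain compactly supported, which follows from Lemma \ref{LEM: restriction of compactly supported form is still compactly supported} (stated in the appendix) applied to the embeddings $\phi_i$ and to the closed inclusion $\overline{A}\hookrightarrow M_{n+1}$. Once these measure-zero and support facts are in hand, the algebra of collecting the alternating sums is routine, and the induction closes.
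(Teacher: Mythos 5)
Your proposal is correct and follows essentially the same route as the paper's own proof in Appendix A.1: induction on $n$, rewriting $\textbf{M}$ as the binary adjunction $\textbf{N}\cup_F M_{n+1}$, applying Theorem \ref{THM: subadditivity for integration} to this decomposition, and then applying the inductive hypothesis both to $\textbf{N}$ and to $A=\bigcup_{i\leq n}M_{i(n+1)}$ viewed inside $M_{n+1}$ before recollecting the alternating sums. Your additional remarks on matching closures up to measure zero and on preservation of compact support are sensible technical checks that the paper leaves implicit.
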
   
    \begin{proof}
        See Appendix A.1.
    \end{proof}

In the above arguments we had to include extra integrals over the boundaries of $M_{ij}$, since otherwise there is no guarantee that the restrictions $\boldsymbol{\omega}|_{M_{ij}}$ maintain a compact support. This may appear to be an innocuous detail, since the inclusion of a boundary component of $M_{12}$ will merely add an extra set of measure zero into the total integral. However, the inclusion of this boundary will have some-far reaching consequences regarding certain types of integral. To illustrate this, we present the following counterexample to Stoke's theorem. 
\begin{lemma}\label{LEM: Stoke theorem counterexample}
        Let $\textbf{M}$ be a binary adjunction such that $\textbf{M}$, and the $M_i$ are compact, without boundary. Then $$\int_{\textbf{M}} d\boldsymbol{\omega} = -  \int_{\partial M_{12}} \phi_{12}^*\boldsymbol{\omega}. $$
\end{lemma}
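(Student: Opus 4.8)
The plan is to combine Theorem \ref{THM: subadditivity for integration} with the ordinary Stokes theorem on each Hausdorff piece. Applying the subadditivity formula to the compactly-supported form $d\boldsymbol{\omega}$ gives
\[
\int_{\textbf{M}} d\boldsymbol{\omega} = \int_{M_1} d\omega_1 + \int_{M_2} d\omega_2 - \int_{\overline{M_{12}}} d\omega_{12},
\]
where $\omega_i = \phi_i^*\boldsymbol{\omega}$ and $\omega_{12} = \iota_{12}^*\omega_1$. Since $M_1$ and $M_2$ are compact without boundary, the classical Stokes theorem forces $\int_{M_1} d\omega_1 = \int_{M_2} d\omega_2 = 0$. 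The only surviving term is $-\int_{\overline{M_{12}}} d\omega_{12}$, and the closure $\overline{M_{12}}$ is, by Remark \ref{REM: assumptions in this paper}, a compact smooth manifold with boundary $\partial M_{12}$. Applying Stokes' theorem on $\overline{M_{12}}$ yields $\int_{\overline{M_{12}}} d\omega_{12} = \int_{\partial M_{12}} \iota^* \omega_{12} = \int_{\partial M_{12}} \phi_{12}^*\boldsymbol{\omega}$, giving the claimed identity.

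First I would set up notation carefully: record that $d\boldsymbol{\omega}$ is compactly supported (immediate, since $\boldsymbol{\omega}$ is and $d$ is local by Theorem \ref{THM:exterior derivative properties}), identify $\omega_{12}$ via the restriction compatibility $d\omega_{12} = \iota_{12}^*(d\omega_1)$, and note that $\overline{M_{12}}$ is a regular domain so it carries the structure of a compact manifold with boundary and the boundary orientation is the induced one. Then I would invoke Theorem \ref{THM: subadditivity for integration} verbatim, kill the two boundaryless integrals, and apply the Hausdorff Stokes theorem (as in \cite[Thm.~16.11]{lee2013smooth}) on $\overline{M_{12}}$.

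The only genuine subtlety — and the step I expect to require the most care — is the compact-support and smoothness bookkeeping for the restricted form $\omega_{12}$ on $\overline{M_{12}}$: one must confirm that $\phi_{12}^*\boldsymbol{\omega}$ really is a smooth compactly-supported form on the compact-with-boundary manifold $\overline{M_{12}}$, so that Stokes applies there. This is exactly the phenomenon flagged in the footnote of Theorem \ref{THM: subadditivity for integration}: passing to the closure is what rescues compactness, and it is the reason the boundary term appears at all. Since $\overline{M_{12}}$ is itself compact (it is closed in the compact $M_1$), compact support is automatic, and smoothness up to the boundary follows from $M_{12}$ being a regular domain; once this is spelled out the rest is a one-line application of two standard results.
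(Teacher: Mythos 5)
Your proposal is correct and follows exactly the paper's own argument: apply Theorem \ref{THM: subadditivity for integration} to $d\boldsymbol{\omega}$, commute the pullbacks with $d$, invoke the Hausdorff Stokes theorem on each piece, and observe that the integrals over the boundaryless $M_1$ and $M_2$ vanish, leaving only the $-\int_{\partial M_{12}}\phi_{12}^*\boldsymbol{\omega}$ term. Your additional remarks on the compact-support and smoothness bookkeeping for $\phi_{12}^*\boldsymbol{\omega}$ on the regular domain $\overline{M_{12}}$ are sound and only make explicit what the paper leaves implicit.
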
   
    \begin{proof}
        Using Theorem \ref{THM: subadditivity for integration} together with Stoke's Theorem for Hausdorff manifolds, we have: 
        \begin{align*} 
        \int_{\textbf{M}} d\boldsymbol{\omega} & = \int_{M_1} \phi_1^*d\boldsymbol{\omega} + \int_{M_2} \phi_2^*d\boldsymbol{\omega} - \int_{\overline{M_{12}}} \phi_{12}^*d\boldsymbol{\omega} \\
        & = \int_{M_1} d\phi_1^*\boldsymbol{\omega} + \int_{M_2} d\phi_2^*\boldsymbol{\omega} - \int_{\overline{M_{12}}} d\phi_{12}^*\boldsymbol{\omega} \\ 
        & = \int_{\partial M_1} \phi_1^*\boldsymbol{\omega} + \int_{\partial M_2} \phi_2^*\boldsymbol{\omega} - \int_{\partial{M_{12}}} \phi_{12}^*\boldsymbol{\omega} \\
        & = - \int_{\partial{M_{12}}} \phi_{12}^*\boldsymbol{\omega}
        \end{align*}
        as required.
    \end{proof}
In the Hausdorff setting, this particular integral will vanish. However, we see here that in the non-Hausdorff integral the internal Hausdorff-violating submanifold still has an influence. As a consequence of the above, we see that exact differential forms may not integrate to zero on a closed non-Hausdorff manifold. 

\section{de Rham Cohomology via Mayer-Vietoris Sequences}
In this section we will describe the de Rham cohomology of a non-Hausdorff manifold $\textbf{M}$ satisfying the conditions of Remark \ref{REM: assumptions in this paper}. According to Theorem \ref{THM: Summary of topological properties}, the (images of) the Hausdorff manifolds $M_i$ sit inside $\textbf{M}$ as open sets. Moreover, we saw in Section 2 that the space of differential forms $\Omega^*(\textbf{M})$ and its exterior derivative $d$ can be described in terms of the same data defined on each $M_i$. With this in mind, we will now set about deriving Mayer-Vietoris sequences for the de Rham cohomology of $\textbf{M}$, computed according to the open cover $\{M_i \}$.

\subsection{A Binary Mayer-Vietoris Sequence}
We will derive our Mayer-Vietoris sequences inductively. So, throughout this section we fix a non-Hausdorff manifold $\textbf{M}$ which is constructed from a pair of Hausdorff spaces $M_1$ and $M_2$ according to Remark \ref{REM: assumptions in this paper}. According to our discussion in the previous section, the differential forms on $\textbf{M}$ display a fibred product structure. In a similar manner to our description of the smooth functions $C^\infty(\textbf{M})$ in Section 1.2, this can be arranged into the following sequence.
\begin{center}
\begin{tikzcd}
0 \arrow[r] & \Omega^q(\textbf{M}) \arrow[r, "\Phi^*"] & \Omega^q(M_1)\oplus \Omega^q(M_2) \arrow[r, "\iota_{12}^* - f_{12}^*"] & \Omega^q(M_{12}) \arrow[r] & 0
\end{tikzcd}
\end{center}
Ideally we would like these sequences to be exact, since then we could apply the Snake Lemma to form a long exact sequence on cohomology. Although the fibre product structure of each $\Omega^q(\textbf{M})$ guarantees that the above sequences are exact in the first term, without the existence of a partition of unity subordinate to $\{M_1, M_2\}$, there is no guarantee that the difference map $\iota_{12}^* - f_{12}^*$ is surjective. As such, the above sequences will not be short exact in general. To resolve this issue, we will instead exchange $\Omega^q(M_{12})$ for a subalgebra. Before doing so, we first make the following observation.
\begin{lemma}\label{LEM: fibred product for differential forms with boundary}
Let $\overline{M_{12}}$ be the closure of $M_{12}$ in $M_1$. Then there is an equality $$ \Omega^k(M_1)\times_{\Omega^{k}(M_{12})}\Omega^k(M_2) = \Omega^k(M_1)\times_{\Omega^{k}(\overline{M_{12}})}\Omega^k(M_2).   $$
\end{lemma}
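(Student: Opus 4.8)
The plan is to unpack both fibred products as subsets of $\Omega^k(M_1)\oplus\Omega^k(M_2)$ and show the defining conditions coincide. Recall that
$$ \Omega^k(M_1)\times_{\Omega^{k}(M_{12})}\Omega^k(M_2) = \big\{ (\omega_1,\omega_2) \ \big| \ \iota_{12}^*\omega_1 = f_{12}^*\omega_2 \ \text{on}\ M_{12} \big\}, $$
and similarly the right-hand side is the set of pairs agreeing (after pulling back along the appropriate inclusion and the extended diffeomorphism $\overline{f_{12}}$) on the closed submanifold $\overline{M_{12}}$. The inclusion $\supseteq$ is immediate: if two forms agree on $\overline{M_{12}}$, they agree on the open subset $M_{12}$, and one checks that restricting $\overline{f_{12}}$ to $M_{12}$ recovers $f_{12}$ by the hypothesis of Theorem \ref{THM: Summary of topological properties} that $\overline{f_{12}}$ extends $f_{12}$. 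So the content is the inclusion $\subseteq$: a pair of forms agreeing on $M_{12}$ must already agree on its closure.

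The key step is a density-plus-continuity argument. First I would note that, under the assumptions of Remark \ref{REM: assumptions in this paper}, $\overline{M_{12}} = Cl^{M_1}(M_{12})$ is a smooth closed submanifold with boundary whose interior is $M_{12}$ (here the regular-domain hypothesis is what guarantees $M_{12}$ is dense in its closure, with the closure a manifold with boundary rather than something pathological). Then, given $(\omega_1,\omega_2)$ with $\iota_{12}^*\omega_1 = f_{12}^*\omega_2$ on $M_{12}$, I want to conclude $\iota^*\omega_1 = \overline{f_{12}}^*\omega_2$ on all of $\overline{M_{12}}$, where $\iota:\overline{M_{12}}\hookrightarrow M_1$. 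Both sides are smooth $k$-forms on $\overline{M_{12}}$; they agree on the dense open subset $M_{12}$; hence by continuity of the form coefficients (evaluate both sides against any smooth local frame of vector fields near a boundary point and take limits along a sequence in $M_{12}$, exactly as in the limiting arguments used in Lemma \ref{LEM: derivations are semi-local}) they agree everywhere on $\overline{M_{12}}$. This shows the pair lies in the right-hand fibred product, giving $\subseteq$.

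I would then assemble the two inclusions into the claimed equality, remarking that the identification is compatible with the obvious $\Omega^k(M_1)\oplus\Omega^k(M_2)$-component maps, so it is an equality of sub-$C^\infty(\textbf{M})$-modules and not merely of sets. The main obstacle is the $\subseteq$ direction, and within it the only subtle point is making sure the topological closure $\overline{M_{12}}$ really is the smooth manifold-with-boundary $Cl^{M_1}(M_{12})$ with $M_{12}$ dense in it — which is precisely why the regular-domain assumption of Remark \ref{REM: assumptions in this paper} was imposed; once that is in hand, the rest is the routine density/continuity observation. It is worth flagging explicitly that this is exactly the same phenomenon that forced the boundary terms into Theorem \ref{THM: subadditivity for integration}: a form pulled back to $M_{12}$ may fail to be compactly supported there, but extends naturally to $\overline{M_{12}}$, and it is this extension that the fibred product over $\Omega^k(\overline{M_{12}})$ records.
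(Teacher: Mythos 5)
Your proof is correct, and it reaches the boundary identity by a more elementary route than the paper does. The paper's argument for the nontrivial inclusion goes through the non-Hausdorff machinery: for $x\in\partial M_{12}$ the points $[x,1]$ and $[\overline{f_{12}}(x),2]$ are Hausdorff-inseparable in $\textbf{M}$ (Theorem \ref{THM: Summary of topological properties}), Lemma \ref{LEM: Hausdorff-violating points for sections of bundles} then forces the images of these points under the global section $\boldsymbol{\omega}$ to be Hausdorff-inseparable in the $k$-form bundle, and the characterization of inseparable points in a colimit bundle translates this into $\omega_1(x)=\overline{f_{12}}^*\omega_2(x)$. You instead observe directly that $\iota^*\omega_1$ and $\overline{f_{12}}^*\omega_2$ are two smooth forms on the manifold-with-boundary $\overline{M_{12}}$ agreeing on the dense open interior $M_{12}$, hence equal by continuity of their coefficients. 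These are the same phenomenon in different clothing --- the proof of Lemma \ref{LEM: Hausdorff-violating points for sections of bundles} is itself a sequence-convergence argument --- but your version is self-contained and avoids the bundle-level Hausdorff-violation characterization, whereas the paper's version makes visible the structural reason the lemma holds, namely that a section cannot separate the two boundary values sitting over inseparable base points. One small correction: the density of $M_{12}$ in $\overline{M_{12}}$ is automatic from the definition of closure and needs no hypothesis; what the regular-domain assumption of Remark \ref{REM: assumptions in this paper} actually buys you is that $\overline{M_{12}}$ is a smooth manifold with boundary on which ``smooth $k$-form'' and continuity of coefficients up to the boundary make sense, and that the extended map $\overline{f_{12}}$ is a diffeomorphism so that $\overline{f_{12}}^*\omega_2$ is defined there.
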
     
\begin{proof}
The inclusion from right-to-left is clear -- any pair of forms that agree on $\overline{M_{12}}$ will also agree on $M_{12}$. For the converse, suppose that $\omega_i$ are a pair of $q$-forms on $M_{i}$ such that $\iota_{12}^*\omega_1 = f_{12}^*\omega_2$ on $M_{12}$. By our assumption, it suffices to show that $\omega_1(x) = \omega_2(\overline{f_{12}}(x))$ for all elements $x$ on the boundary of $M_{12}$. So, consider a point $x$ in $\partial{M_{12}}$. By Theorem \ref{THM: Summary of topological properties}, the points $[x,1]$ and $[f_{ij}(x), 2]$ violate the Hausdorff property in $\textbf{M}$.  According to Lemma \ref{LEM: Hausdorff-violating points for sections of bundles}, the global form $\boldsymbol{\omega}$ constructed from $\omega_1$ and $\omega_2$ can be used to describe the Hausdorff-violating points of the $q$-form bundle over $\textbf{M}$. In our situation, this means that $\boldsymbol{\omega}([x,1])$ and $\boldsymbol{\omega}([f_{12}(x), 2])$ violate the Hausdorff property in the $q$-form bundle over $\textbf{M}$. Again by Theorem \ref{THM: Summary of topological properties}, this means that $\omega_1 = \overline{f_{12}}^* \omega_2$, that is, $$ \iota_{12}^*\omega_1(x) = \omega_1 \circ \iota_{12}(x) = \omega_1(x) = \omega_2(\overline{f_{12}}(x)) = \overline{f_{12}}^* \omega_2 (x), $$
as required.
\end{proof}

By restricting our attention to the differential forms on $M_{12}$ that are finite on its boundary, we allow ourselves to extend any differential form $\omega$ in $\Omega^q(\overline{M_{12}})$ by zero into either $M_1$ or $M_2$. This ensures that both the pullback maps $\iota^*$ and $f^*$ are surjective, and consequently so is the difference of the two. By Lemma \ref{LEM: fibred product for differential forms with boundary} we obtain our desired short exact sequence,
\begin{center}
\begin{tikzcd}
0 \arrow[r] & \Omega^k(\textbf{M}) \arrow[r, "\Phi^*"] & \Omega^k(M_1)\oplus \Omega^k(M_2) \arrow[r, "\iota_{12}^* - (\overline{f_{12}})^*"] & \Omega^k(\overline{M_{12}}) \arrow[r] & 0
\end{tikzcd}
\end{center}
\noindent and thus we may form the following long exact sequence for de Rham cohomology. 
\begin{center}
\adjustbox{scale=0.9}{\begin{tikzcd}[arrow style=math font,cells={nodes={text height=2ex,text depth=0.75ex}}]
\tikzset{
  curarrow/.style={
  rounded corners=8pt,
  execute at begin to={every node/.style={fill=red}},
    to path={-- ([xshift=50pt]\tikztostart.center)
    |- (#1) node[fill=white] {$\scriptstyle d^*$}
    -| ([xshift=-50pt]\tikztotarget.center)
    -- (\tikztotarget)}
    }
}      
       
       \cdots \arrow[r] & H_{dR}^q (\textbf{M}) \arrow[r, "\Phi^*"] & H_{dR}^q(M_{1}) \oplus  H_{dR}^q(M_2) \arrow[r,"\iota^*_{12} - (\overline{f_{12}})^*"] \arrow[draw=none]{d}[name=Y, shape=coordinate]{} & H_{dR}^q(\overline{M_{12}}) \arrow[curarrow=Y]{dll}{} & \\
       & H_{dR}^{q+1} (\textbf{M}) \arrow[r, "\Phi^*"] & H_{dR}^{q+1}(M_{1}) \oplus  H_{dR}^{q+1}(M_2) \arrow[r, "\iota^*_{12} - (\overline{f_{12}})^*"]  & H_{dR}^{q+1}(\overline{M_{12}}) \arrow[r] & \cdots 
\end{tikzcd}}
\end{center}
In general we may not proceed any further than this. However, we observe that if $\textbf{M}$ is built in such a way that the gluing region $M_{12}$ is regular-open, then we may derive a Mayer-Vietoris sequence that is perhaps more familiar. Recall that a (sub)manifold with boundary is homotopically equivalent to its interior (cf. \cite[Thm 9.26]{lee2013smooth}). Moreover, for a regular open set, it is always the case that the interior of its closure equals itself. Combining these two facts, we see that requiring $M_{12}$ to be regular-open amounts to requiring that $M_{12}$ is homotopically equivalent to its closure $\overline{M_{12}}$. Since de Rham is homotopy invariant for Hausdorff manifolds, our extra requirement ensures that $H^q_{dR}(M_{12})$ and $H^q(\overline{M_{12}})$ are isomorphic. This means that we can replace terms in the above sequence to obtain the following. 
\begin{center}
\adjustbox{scale=0.9}{\begin{tikzcd}[arrow style=math font,cells={nodes={text height=2ex,text depth=0.75ex}}]
\tikzset{
  curarrow/.style={
  rounded corners=8pt,
  execute at begin to={every node/.style={fill=red}},
    to path={-- ([xshift=50pt]\tikztostart.center)
    |- (#1) node[fill=white] {$\scriptstyle d^*$}
    -| ([xshift=-50pt]\tikztotarget.center)
    -- (\tikztotarget)}
    }
}      
       
       \cdots \arrow[r] & H_{dR}^q (\textbf{M}) \arrow[r, "\Phi^*"] & H_{dR}^q(M_{1}) \oplus  H_{dR}^q(M_2) \arrow[r,"\iota^*_{12} - f_{12}^*"] \arrow[draw=none]{d}[name=Y, shape=coordinate]{} & H_{dR}^q(M_{12}) \arrow[curarrow=Y]{dll}{} & \\
       & H_{dR}^{q+1} (\textbf{M}) \arrow[r, "\Phi^*"] & H_{dR}^{q+1}(M_{1}) \oplus  H_{dR}^{q+1}(M_2) \arrow[r, "\iota^*_{12} - f_{12}^*"]  & H_{dR}^{q+1}(M_{12}) \arrow[r] & \cdots 
\end{tikzcd}}
\end{center}

\subsection{The Čech-de Rham Bicomplex}
Suppose now that $\textbf{M}$ is covered by finitely-many $M_i$. In this case, we will proceed with an argument similar to that of \cite{bott1982differential, oconnell2023vector, crainic1999remark} to construct a bicomplex which will describe the de Rham cohomology of $\textbf{M}$. \\

Before getting to the desired result, we first need to comment on some additional assumptions required in the proceeding arguments. In what follows, we will supplement each higher intersection $M_{i_1 \cdots i_p}$ with its Hausdorff boundary, and compute the Mayer-Vietoris long exact sequence according to these sets. However, our derivation will not hold in the case that the closed submanifolds $\overline{M_{i_1 \cdots i_p}}$ carry additional cohomological information that is not already present in their interiors. In order to avoid such situations, we will assume that the higher intersections satisfy the closure property: $$   \overline{M_{i_1 \cdots i_m}} =  \bigcap_{p=1}^m \overline{M_{i_p}} $$ 
for all $m$ in $I$. Of course, this property does not hold for arbitrary closed sets. However, it removes the possibility of sets $M_{ij}$ intersecting only on the codimension-1 boundary components. \\ 

With these extra assumptions in mind, we will now derive a Mayer-Vietoris long exact sequence for $\textbf{M}$. In doing so, we will need to use the Čech differential $\delta$, the details of which can be found in \cite[§8]{bott1982differential}. In our context, the map $\delta$ generalises the difference maps $\iota_{ij}^* - f_{ij}^*$ as follows: 
$$  \delta \omega_{i_1 \cdots i_{p+1}} = \sum_{\alpha=1}^p (-1)^\alpha \iota^*_{i_1 \cdots i_{\hat{\alpha}} \cdots i_p} \omega_{i_1 \cdots i_{\hat{\alpha}} \cdots i_p},      $$
where here the map $\iota_{i_1 \cdots i_{\hat{\alpha}} \cdots i_p}: M_{i_1 \cdots i_p}\rightarrow  M_{i_1 \cdots i_{\hat{\alpha}} \cdots i_p}$ is the inclusion map, and the caret denotes the omission of that particular index. Note that by virtue of its combinatorics, the Čech differential always squares to zero (cf \cite[Prop. 8.3]{bott1982differential}). 

\begin{theorem}\label{THM: MV long exact sequence for de rham}Let $\textbf{M}$ be a colimit of $n$-many manifolds $M_i$ in accordance with \ref{REM: assumptions in this paper}. In addition, suppose that each $M_{ij}$ is regular-open in $M_i$ and that the closure-intersection property $\overline{M_{i_1 \cdots i_m}} =  \bigcap_{p=1}^m \overline{M_{i_p}} $ holds for all $m\leq n$. Then the sequence 
    $$      0 \rightarrow \Omega^q(\textbf{M}) \xrightarrow{ \ \Phi^* \ } \bigoplus_{i} \Omega^q(M_i) \xrightarrow{ \ \ \delta \ \ } \bigoplus_{i<j} \Omega^q\left(\overline{M_{ij}}\right) \xrightarrow{ \ \ \delta \ \ } \cdots \xrightarrow{ \ \ \delta \ \ } \Omega^q\left(\overline{M_{1\cdots n}}\right) \rightarrow 0       $$
    is exact for all $q$ in $\mathbb{N}$.
\end{theorem}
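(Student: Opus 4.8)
The plan is to prove exactness of the augmented \v{C}ech complex $0 \to \Omega^q(\textbf{M}) \to \bigoplus_i \Omega^q(M_i) \to \bigoplus_{i<j}\Omega^q(\overline{M_{ij}}) \to \cdots$ degree by degree, mirroring the standard argument for the generalized Mayer--Vietoris principle (cf. \cite[§8]{bott1982differential}), but with two modifications forced by our non-Hausdorff setting: the replacement of each open intersection $M_{i_1\cdots i_p}$ by its closure $\overline{M_{i_1 \cdots i_p}}$, and the absence of partitions of unity. Exactness at the first spot, $\ker\delta \cap \bigoplus_i \Omega^q(M_i) = \mathrm{im}\,\Phi^*$, is precisely the fibre-product formula: Theorem \ref{THM: sections are fibred product} (applied to the exterior power bundles $\bigwedge^q T^*\textbf{M}$) says a tuple $(\omega_1,\dots,\omega_n)$ with $\iota_{ij}^*\omega_i = f_{ij}^*\omega_j$ on every $M_{ij}$ glues to a global form, and Lemma \ref{LEM: fibred product for differential forms with boundary} (extended to arbitrary $p$-fold intersections via the closure-intersection hypothesis) lets us read the compatibility condition on $\overline{M_{ij}}$ rather than $M_{ij}$. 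Injectivity of $\Phi^*$ is immediate since the $M_i$ cover $\textbf{M}$.

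For exactness at the higher spots I would proceed inductively, using the inductive construction of $\textbf{M}$ recalled in Section 1.1: write $\textbf{M}$ as the binary adjunction of $\textbf{N}$ (the colimit of $M_1,\dots,M_{n-1}$) with $M_n$ along $A = \bigcup_{i<n} M_{in}$. The binary short exact sequence of Section 3.1, namely $0 \to \Omega^q(\textbf{M}) \to \Omega^q(\textbf{N})\oplus\Omega^q(M_n) \to \Omega^q(\overline{A}) \to 0$, together with the inductive hypothesis applied to $\textbf{N}$ and to the induced cover of $\overline A$ by the sets $\overline{M_{in}}$, should assemble into the full \v{C}ech complex by a double-complex/totalization bookkeeping argument. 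Alternatively — and this is probably cleaner to write — I would give the direct argument: fix a $\delta$-closed element $(\omega_{i_1\cdots i_p})$ in degree $p\geq 2$ and construct an explicit $(p-1)$-cochain $(\eta_{i_1\cdots i_{p-1}})$ with $\delta\eta = \omega$. In the Hausdorff case one sets $\eta_{i_1\cdots i_{p-1}} = \sum_j \rho_j\,\omega_{j i_1\cdots i_{p-1}}$ with $\{\rho_j\}$ a partition of unity; here that fails. The substitute is the \emph{extension-by-zero} mechanism of Lemma \ref{LEM: functions on M}(2) and the paragraph following it: because each $M_{ij}$ is a regular domain with diffeomorphic boundary (Remark \ref{REM: assumptions in this paper}), any form on $\overline{M_{i_1\cdots i_p}}$ extends by zero to a form on $\overline{M_{i_1\cdots i_{p-1}}}$ (the closure-intersection property guarantees $\overline{M_{i_1\cdots i_p}}$ sits as a closed regular domain inside $\overline{M_{i_1\cdots i_{p-1}}}$, so extension by zero lands in the right function space and is smooth across the boundary). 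Summing these extensions over the omitted index, exactly as in the partition-of-unity formula but with the cutoff role played by extension-by-zero, produces the required $\eta$; verifying $\delta\eta = \omega$ is the same combinatorial identity as in \cite[§8]{bott1982differential}.

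The main obstacle I anticipate is making the extension-by-zero step fully rigorous at the level of differential \emph{forms} on closed submanifolds with boundary, and checking that the resulting $\eta$ genuinely lies in $\Omega^{p-1}$ of the appropriate closure and is smooth. Concretely, one must verify: (a) that a smooth form on the closed set $\overline{M_{i_1\cdots i_p}}$, extended by zero, is smooth on $\overline{M_{i_1\cdots i_{p-1}}}$ — this needs the form to vanish to all orders at the boundary, which is \emph{not} automatic, so the honest fix is to first multiply by a bump function supported near $\overline{M_{i_1\cdots i_p}}$ and vanishing outside a slightly larger set, using the collar-neighbourhood structure of the regular domain, and only then extend by zero; (b) that the closure-intersection hypothesis $\overline{M_{i_1\cdots i_m}} = \bigcap_p \overline{M_{i_p}}$ is genuinely what rules out the pathological codimension-one-only intersections and ensures all the closures behave like honest regular domains inside one another. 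Once the extension/cutoff operators are in place, the verification that they serve as a "\v{C}ech contracting homotopy" is routine index-chasing. I would therefore allocate most of the write-up to carefully defining these extension operators and proving they preserve smoothness, and dispatch the combinatorial identity $\delta\eta = \omega$ by citing the analogous computation in \cite[Prop. 8.5]{bott1982differential}.
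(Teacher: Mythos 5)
Your treatment of exactness at $\bigoplus_i\Omega^q(M_i)$ (fibre product plus Lemma \ref{LEM: fibred product for differential forms with boundary}) matches the paper, and your first, inductive sketch via $\textbf{M}\cong\textbf{N}\cup_F M_n$ is the skeleton of the paper's actual argument. But the route you declare ``probably cleaner'' and devote most of the proposal to --- a direct \v{C}ech contracting homotopy $\eta_{i_1\cdots i_{p-1}}=\sum_j E_j(\omega_{ji_1\cdots i_{p-1}})$ with extension-by-zero operators standing in for a partition of unity --- has a fatal gap. The identity $\delta\eta=\omega$ in \cite[Prop.~8.5]{bott1982differential} is not partition-of-unity-agnostic index-chasing: it uses $\sum_j\rho_j=1$ in an essential way, since the computation yields $\delta\eta=\bigl(\sum_j\rho_j\bigr)\omega$ after the cocycle condition is applied. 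Whatever cutoffs $\chi_j$ your operators $E_j$ carry, they must vanish to all orders on $\partial^{M_i}M_{ij}$ (otherwise $\chi_j\,\omega_{ji}$ does not extend smoothly by zero past that boundary into $M_i$), and consistency across the identification $\overline{f_{ij}}$ forces $\chi_i$ to vanish at the corresponding points of $\partial^{M_j}M_{ji}$ as well; hence $\sum_k\chi_k=0\neq 1$ at every Hausdorff-violating boundary point of $\overline{M_{ij}}$. This is exactly the obstruction of Lemma \ref{LEM: no partitions of unity}, so $\delta\eta\neq\omega$ precisely on the closures you were forced to introduce, and no choice of cutoffs repairs it. (Your fix in (a) is also internally inconsistent: a bump that is $1$ on the closed set $\overline{M_{i_1\cdots i_p}}$ does not vanish at that set's boundary, so the product still fails to extend smoothly by zero.)

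The paper never runs a contracting homotopy on $\textbf{M}$ itself. It inducts on $n$, splices the complex using the kernel of restriction to $M_1\cup\cdots\cup M_{n-1}$, and reduces everything to the exactness of a Mayer--Vietoris sequence for \emph{closed} codimension-$0$ submanifolds inside a single \emph{Hausdorff} manifold (Appendix A.2). There the argument is: extend each component of a $\delta$-closed cochain to an open thickening $U_{i_0\cdots i_p}$ of the closed set, where genuine partitions of unity exist; correct the extensions by counterterms $\sum_r\rho_r(\delta\omega)_{r i_0\cdots i_p}$ so the extended cochain is $\delta$-closed on the open cover; invoke the standard open-cover exactness; and restrict the resulting primitive back to the closed sets. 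If you pursue your inductive sketch you will need exactly that closed-cover lemma --- it is where the real work lives, and it is the step your proposal currently replaces with an argument that cannot succeed.
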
       
\begin{proof}
    We will argue this by induction on the size of the indexing set $I$. Observe first that the binary case is already satisfied via the discussion in Section 3.1. For readability we will illustrate the inductive argument for $I=3$, since the general case is identical. According to our assumptions on the topology of $\textbf{M}$, we may view $\textbf{M}$ as an inductive colimit in which we glue $M_3$ to the union $M_1 \cup M_2$ along the subspace $M_{13}\cup M_{23}$. We denote by $\kappa$ the inclusion of the adjuction $M_1\cup_{f_{12}} M_2$ into $\textbf{M}$. Note that according to Section 1.1.1 of \cite{oconnell2023vector}, $\kappa$ is an open topological embedding. Diagrammatically, the differential forms on all the spaces in involved may be arranged as follows.

    \begin{center}
\adjustbox{scale=0.8}{
\begin{tikzcd}[row sep=4.8em, column sep = 1.4em]
            & 0 \arrow[d]                                                                 & 0 \arrow[d]                                                                                & 0 \arrow[d]                                                                                                    & 0 \arrow[d]                              &   \\
0 \arrow[r] & \ker{(\kappa^*)} \arrow[r, "\phi_3^*"] \arrow[d]                              & \Omega^q(M_3) \arrow[r, "{(-\overline{f_{13}}^*, -\overline{f_{23}}^*)}"] \arrow[d]                        & \Omega^q(\overline{M_{13}})\oplus \Omega^q(\overline{M_{23}}) \arrow[r, "\delta"] \arrow[d] & \Omega^q(\overline{M_{123}}) \arrow[r] \arrow[d] & 0 \\
0 \arrow[r] & \Omega^q(\textbf{M}) \arrow[r, "\Phi^*"] \arrow[d, "\kappa^*"']            & \bigoplus_{i} \Omega^q(M_i) \arrow[r, "\delta"] \arrow[d]                       & \bigoplus_{i<j} \Omega^q(\overline{M_{ij}}) \arrow[r, "\delta"] \arrow[d]                                      & \Omega^q(\overline{M_{123}}) \arrow[r] \arrow[d] & 0 \\
0 \arrow[r] & \Omega^q(M_{1} \cup M_{2}) \arrow[r] \arrow[d] & \Omega^q(M_{1})\oplus \Omega^q(M_{2}) \arrow[r] \arrow[d] & \Omega^q(\overline{M_{12}}) \arrow[r] \arrow[d]                                                                        & 0 \arrow[r] \arrow[d]                    & 0 \\
            & 0                                                                           & 0                                                                                          & 0                                                                                                              & 0                                        &  
\end{tikzcd}

}
\end{center}
In the above diagram, the objects in the first row are kernels of the vertical maps, and the horizontal maps are defined so as the make the diagram commute. Observe that the third row is an exact sequence. Thus, in order to argue for the exactness of the central row, it suffices to argue that the first row is exact. In principle we may argue for this directly. However, in order to illustrate the inductive nature of this argument, we will instead decompose the first row into the sequences: 
$$  0\rightarrow \ker(\kappa^*) \xrightarrow{\ \ \phi_2^* \ \ } \Omega^q(M_3) \xrightarrow{\ \ -\iota^* \ \ } \Omega^q(\overline{M_{13}}\cup \overline{M_{23}}) \rightarrow 0            $$
\vspace{-3mm}
$$  0 \rightarrow \Omega^q(\overline{M_{13}}\cup \overline{M_{23}}) \rightarrow \Omega^q(\overline{M_{13}}) \oplus \Omega^q(\overline{M_{23}}) \rightarrow \Omega^q(\overline{M_{123}})\rightarrow 0        $$
where here this splicing is formed by evaluating the kernel of the $\delta$ map. Regarding the former sequence, we will argue for exactness directly. Observe first that the map $\phi_3^*$ is manifestly injective, since any pair of distinct forms lying in $\ker{\kappa^*}$ will differ somewhere on $M_3$. Moreover, any form that lies in the image of the map $\phi_3^*$ is the restriction of a global form on $\textbf{M}$ that vanishes on $M_1 \cup M_2$. Since $\overline{M_{13}}\cup \overline{M_{23}}$ is a subset of $M_1 \cup M_2$, the restriction of such forms will also vanish. This ensures that $Im(\phi_3^*)\subseteq \ker{(-\iota^*)}$. For the converse, suppose that $\omega$ is some form on $M_3$ that maps to zero under $\iota^*$. This means that $\omega$ vanishes on $M_1$ and $M_2$. As such, the collection $(0,0,\omega)$ will induce a global form $\boldsymbol{\eta}$ on $\textbf{M}$ such that $\phi_3^*\boldsymbol{\eta}=\omega$ and $\boldsymbol{\eta} \in \ker{(-\iota^*)}$. Finally, since the set $\overline{M_{13}}\cup \overline{M_{23}}$ is closed, an extension by zero into $M_3$ will ensure the surjectivity of the restriction map $-\iota^*$. \\

In general, the latter sequence may not be exact. However, our assumptions on the intersections ensures that $\overline{M_{123}}=\overline{M_{13}} \cap \overline{M_{23}}$, which turns this sequence into a Hausdorff Mayer-Vietoris sequence for closed subsets. Exactness of this sequence then follows by an argument analogous to that of Lemma A.3 in the Appendix.\footnote{In the general inductive step, this sequence will expand into a longer sequence of the form of that found in Appendix A.2, and exactness will instead follow from that result.} Since the two aforementioned sequences are exact, and they splice to form the first row of the diagram, we may conclude that the first row of the diagram is exact, from which the result follows.
\end{proof}
According to the previous result, we may organise the de Rham cohomology for $\textbf{M}$ in terms of the bicomplex detailed in Figure \ref{FIG:CdR bicomplex for non-Hausdorff M}. This bicomplex relates the de Rham cohomology to the Čech cohomology of $\textbf{M}$ computed from the open cover $\{ M_i\}$. According to Theorem \ref{THM: MV long exact sequence for de rham} the rows of this bicomplex are exact, so we may use standard arguments (found in e.g. \cite[§8]{bott1982differential}) to conclude that the cohomology of this bicomplex coincides with the de Rham cohomology for $\textbf{M}$.\\

It is important to note that at this stage we have made no assumptions on the cohomology of each $M_i$. In particular, we did not assume that each $M_i$ (and their interesections) are contractible. This means that generally speaking, there is no guarantee that the columns of this bicomplex are exact, and thus we cannot prove a Čech-de Rham equivalence as in the Hausdorff case.  

\begin{figure}
    \centering
    \begin{tikzpicture}
    
    \draw[gray!25] (-3.5,1.25)--(9.5,1.25);
    \draw[gray!25] (-3.5,2.75)--(9.5,2.75);
    \draw[gray!25] (-3.5,4.25)--(9.5,4.25);
    \draw[gray!25] (-3.5,5.75)--(9.5,5.75);
    
    \draw[gray!25] (2.45,-1.5)--(2.45,7);
    \draw[gray!25] (5.45,-1.5)--(5.45,7);
    \draw[gray!25] (8.55,-1.5)--(8.55,7);

  \draw[thick] (-3.5,-0.25) -- (9.5,-0.25) node[below] {};
  \draw[thick] (-0.5,-1.5) -- (-0.5,7) node[left] {};

\node[] at (-3.05,0.5) {$0$};
\node[] at (-3.05,2) {$0$};
\node[] at (-3.05,3.5) {$0$};
\node[] at (-3.05,5) {$0$};

\node[] at (-1.5,0.5) {$\Omega^0(\textbf{M})$};
\node[] at (-1.5,2) {$\Omega^1(\textbf{M})$};
\node[] at (-1.5,3.5) {$\Omega^2(\textbf{M})$};
\node[] at (-1.5,5) {$\Omega^3(\textbf{M})$};
\node[] at (-1.5,6.5) {$\vdots$};

\node[] at (1,-1) {$ \check{C}^0(\textbf{M},\{M_i\}) $};
\node[] at (4,-1) {$ \check{C}^1(\textbf{M},\{M_i\}) $};
\node[] at (7,-1) {$ \check{C}^2(\textbf{M},\{M_i\}) $};

\node[] at (1,0.4) {$\bigoplus\limits_{i} \Omega^0(M_i)$};
\node[] at (1,1.9) {$\bigoplus\limits_{i} \Omega^1(M_i)$};
\node[] at (1,3.4) {$\bigoplus\limits_{i} \Omega^2(M_i)$};
\node[] at (1,4.9) {$\bigoplus\limits_{i} \Omega^3(M_i)$};
\node[] at (1,6.5) {$\vdots$};

\node[] at (4,0.4) {$\bigoplus\limits_{i<j} \Omega^0(\overline{M_{ij}})$};
\node[] at (4,1.9) {$\bigoplus\limits_{i<j} \Omega^1(\overline{M_{ij}})$};
\node[] at (4,3.4) {$\bigoplus\limits_{i<j} \Omega^2(\overline{M_{ij}})$};
\node[] at (4,4.9) {$\bigoplus\limits_{i<j} \Omega^3(\overline{M_{ij}})$};
\node[] at (4,6.5) {$\vdots$};

\node[] at (7,0.4) {$\bigoplus\limits_{i<j<k} \Omega^0(\overline{M_{ijk}})$};
\node[] at (7,1.9) {$\bigoplus\limits_{i<j<k} \Omega^1(\overline{M_{ijk}})$};
\node[] at (7,3.4) {$\bigoplus\limits_{i<j<k} \Omega^2(\overline{M_{ijk}})$};
\node[] at (7,4.9) {$\bigoplus\limits_{i<j<k} \Omega^3(\overline{M_{ijk}})$};
\node[] at (7,6.5) {$\vdots$};

\node[] at (9.35,5) {$\cdots$};
\node[] at (9.35,3.5) {$\cdots$};
\node[] at (9.35,2) {$\cdots$};
\node[] at (9.35,0.5) {$\cdots$};
\node[] at (9.35,-1) {$\cdots$};

\draw[->] (2.25,-1)--(2.75,-1);
\draw[->] (5.25,-1)--(5.75,-1);
\draw[->] (8.25,-1)--(8.75,-1);

\draw[->] (-2.85,0.5)--(-2.25,0.5);
\draw[->] (-0.85,0.5)--(0.05,0.5);
\draw[->] (1.95,0.5)--(2.95,0.5);
\draw[->] (5.05,0.5)--(5.95,0.5);
\draw[->] (8.3,0.5)--(8.95,0.5);

\draw[->] (-2.85,2)--(-2.15,2);
\draw[->] (-0.85,2)--(0.05,2);
\draw[->] (1.95,2)--(2.95,2);
\draw[->] (5.05,2)--(5.95,2);
\draw[->] (8.3,2)--(8.95,2);

\draw[->] (-2.85,3.5)--(-2.15,3.5);
\draw[->] (-0.85,3.5)--(0.05,3.5);
\draw[->] (1.95,3.5)--(2.95,3.5);
\draw[->] (5.05,3.5)--(5.95,3.5);
\draw[->] (8.3,3.5)--(8.95,3.5);

\draw[->] (-2.85,5)--(-2.15,5);
\draw[->] (-0.85,5)--(0.05,5);
\draw[->] (1.95,5)--(2.95,5);
\draw[->] (5.05,5)--(5.95,5);
\draw[->] (8.3,5)--(8.95,5);

\draw[->] (-1.5, 0.85)--(-1.5, 1.65);
\draw[->] (-1.5, 2.35)--(-1.5, 3.15);
\draw[->] (-1.5, 3.85)--(-1.5, 4.65);
\draw[->] (-1.5, 5.35)--(-1.5, 6.15);

\draw[->] (1, -0.65)--(1, 0.15);
\draw[->] (1, 0.85)--(1, 1.65);
\draw[->] (1, 2.35)--(1, 3.15);
\draw[->] (1, 3.85)--(1, 4.65);
\draw[->] (1, 5.35)--(1, 6.15);

\draw[->] (4, -0.65)--(4, 0.15);
\draw[->] (4, 0.85)--(4, 1.65);
\draw[->] (4, 2.35)--(4, 3.15);
\draw[->] (4, 3.85)--(4, 4.65);
\draw[->] (4, 5.35)--(4, 6.15);

\draw[->] (7, -0.65)--(7, 0.15);
\draw[->] (7, 0.85)--(7, 1.65);
\draw[->] (7, 2.35)--(7, 3.15);
\draw[->] (7, 3.85)--(7, 4.65);
\draw[->] (7, 5.35)--(7, 6.15);

    \end{tikzpicture}
    \caption{The Čech-de Rham Bicomplex for $\textbf{M}$.}
    \label{FIG:CdR bicomplex for non-Hausdorff M}
\end{figure}

\section{A Non-Hausdorff de Rham Theorem}

De Rham's theorem is a fundamental result that states an equivalence between de Rham cohomology and singular homology with real coefficients. In this section we will extend the result to non-Hausdorff manifolds, essentially by an appeal to the Mayer-Vietoris sequences of Section 3. \\

Throughout this section we will follow  \cite[Chpt. 18]{lee2013smooth}, and denote 
singular $n$-simplices by $\sigma: \Delta_q\rightarrow \textbf{M}$. We denote the space of singular $q$-chains by $C_q(\textbf{M})$, where here we view these as infinite-dimensional vector spaces over $\mathbb{R}$. An arbitrary singular $q$-chain is then a formal linear combination of continuous maps from $\Delta_q$ to $\textbf{M}$. We will denote the boundary operator for simplices by $\partial$, defined as per usual. \\

Ordinarily, the Hausdorff de Rham theorem is proved by defining an ``integration over chains", which relates differential forms to singular chains. However, there is a subtely here: in order to integrate over chains, differential forms need to be pulled back to $\Delta_q$ along \textit{smooth} maps. As such, integration over chains actually induces a pairing between de Rham cohomology and the ``smooth singular homology", which \textit{a priori} may differ from singular homology. This issue may be resolved by arguing that smooth singular homology is equivalent to singular homology via some homotopic approximation of continuous maps by smooth ones. In this section we will follow a similar approach, with some key modifications. 

\subsection{Smooth Singular Homology}
We start our derivation of de Rham's theorem with a confirmation that smooth singular homology coincides with singular homology in the non-Hausdorff regime. In this context, ``smooth" means that any simplex $\sigma: \Delta_q \rightarrow \textbf{M}$ has a smooth extension in some neighbourhood of each point in $\Delta_q$. By standard arguments it can be shown that the boundary of a smooth chain is again smooth, and thus the space $C^\infty_q(\textbf{M})$ of smooth $q$-chains forms a subcomplex of $C_q(\textbf{M})$, with the inclusion $i: C^\infty_q(\textbf{M})\rightarrow C_q(\textbf{M})$ being a chain map. \\

For Hausdorff manifolds, the equivalence between the smooth and singular homologies is typically argued by appealing to Whitney's Embedding theorem, a consequence of which is that any continuous map between manifolds is homotopic to a smooth one. This general observation can be used to construct a homotopy equivalence between $C_q(\textbf{M})$ and $C_q^\infty(\textbf{M})$ (cf. \cite[Thm 18.7]{lee2013smooth}). Since Hausdorffness is a hereditary property, there are no circumstances under which a non-Hausdorff manifold may be embedded into Euclidean space, of any dimension. Moreover, it is unclear whether all continuous maps between non-Hausdorff manifolds can be approximated by smooth maps. Instead of addressing the problem head-on, we will instead prove the equivalence between $H^\infty_p(\textbf{M})$
and $H_p(\textbf{M})$ by alternate means. \\

Since the standard Hausdorff argument is not available to us, instead we will derive a Mayer-Vietoris sequence for smooth singular homology from first principles. For a binary adjunction, this would amount to a proof of the exactness of the sequence $$  0 \rightarrow C^\infty_q(M_{12}) \xrightarrow{ \ ((\iota_{12})_* , -(f_{12})_*)  \ } C^\infty_q(M_1)\oplus C^\infty_q(M_2) \xrightarrow{ \ (\phi_1)_* + (\phi_2)_*  \ } C^\infty_q(\textbf{M}) \rightarrow 0 $$
where the maps involved are analogues of the pushforwards found in \cite{lee2013smooth}. As a matter of fact, the above sequence is trivially exact if we replace the last non-zero term with the space $C^\infty_q(M_1 + M_2)$, which is the space of smooth singular chains in $\textbf{M}$ that take image in either $M_1$ or $M_2$. This is a subspace of $C^\infty_q(\textbf{M})$, and thus there is an inclusion $\iota:C^\infty_q(M_1 + M_2) \rightarrow C^\infty_q(\textbf{M})$. In the singular case, there is a well-known construction which guarantees that the inclusion $\iota$ is a homotopy equivalence \cite{hatcher2002algebraic}[Prop. 2.21]. As noted in \cite[Pg. 135]{felix2012rational}, the same argument will hold for smooth singular chains as well. With an eye towards Theorem \ref{THM: Equivalence between smooth singular and singular}, we will state the result in slightly more general language.
\begin{lemma}\label{LEM: homotopy equivalence between smooth singular and singular chains}
    Let $\mathcal{U}=\{U_1, U_2\}$ be an open cover of $\textbf{M}$, and denote by $C^\infty_q(U_1 + U_2)$ the space of smooth singular chains $c=\sum_\alpha c_\alpha \sigma_\alpha$ such that each $\sigma_\alpha$ has image contained entirely in either $U_1$ or $U_2$. Then the inclusion $\iota: C^\infty_q(U_1 + U_2) \rightarrow C^\infty_q(\textbf{M}) $ is a homotopy equivalence.
\end{lemma}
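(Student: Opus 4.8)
The plan is to run the classical small-chains argument by iterated barycentric subdivision (as in \cite[Prop.~2.21]{hatcher2002algebraic}), checking only that each operator involved preserves smoothness of chains. The non-Hausdorffness of $\textbf{M}$ will play no role: every delicate estimate takes place inside the standard simplex $\Delta_q$.

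First I would recall the barycentric subdivision operator $S:C_q(\textbf{M})\to C_q(\textbf{M})$ together with the chain homotopy $T$ satisfying $\partial T+T\partial=S-\mathrm{id}$. Both $S$ and $T$ are built from the linear subdivision of $\Delta_q$, and they send a singular simplex $\sigma$ to a signed sum of simplices of the form $\sigma\circ\lambda$, where $\lambda$ is an affine (in fact simplicial) map into $\Delta_q$. If $\sigma$ is smooth, i.e. admits a smooth local extension near each point of $\Delta_q$, then so does each $\sigma\circ\lambda$, since precomposition with an affine map preserves smoothness. Hence $S$ and $T$ restrict to operators on $C^\infty_q(\textbf{M})$, and likewise they preserve the subspace $C^\infty_q(U_1+U_2)$. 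This is the only point at which the smooth structure must be examined, and it is exactly the observation of \cite[Pg.~135]{felix2012rational}.

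Next, for a fixed smooth simplex $\sigma:\Delta_q\to\textbf{M}$, the sets $\sigma^{-1}(U_1)$ and $\sigma^{-1}(U_2)$ form an open cover of the compact metric space $\Delta_q$; this uses only continuity of $\sigma$ and openness of $U_1,U_2$ in $\textbf{M}$, so it is indifferent to the topology of $\textbf{M}$. Since the diameters of the simplices occurring in the $m$-th iterated barycentric subdivision of $\Delta_q$ tend to zero, a Lebesgue-number argument produces an integer $m(\sigma)$ such that every simplex of $S^{m(\sigma)}\sigma$ has image inside $U_1$ or inside $U_2$, i.e. $S^{m(\sigma)}\sigma\in C^\infty_q(U_1+U_2)$. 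As in Hatcher's proof, one must choose $m(\sigma)$ compatibly with face maps (so that $m$ is monotone under passing to a face), and then assemble the operators $D\sigma=\sum_{0\le i<m(\sigma)}TS^i\sigma$ into a chain homotopy $\partial D+D\partial=\mathrm{id}-\iota\rho$, where $\rho:C^\infty_q(\textbf{M})\to C^\infty_q(U_1+U_2)$ is the resulting ``subdivide enough times'' chain map. All of this is formally identical to the singular case once Step~1 guarantees that $S$, $T$, $D$, $\rho$ restrict to the smooth subcomplexes; and $\rho\circ\iota\simeq\mathrm{id}$ on $C^\infty_q(U_1+U_2)$ by the same homotopy restricted to that subcomplex, so $\iota$ is a chain homotopy equivalence.

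The only genuine bookkeeping obstacle is the standard one — making the assignment $\sigma\mapsto m(\sigma)$ well behaved with respect to faces, so that $D$ is an honest chain homotopy rather than a loose collection of homotopies — and this is handled exactly as in \cite[Prop.~2.21]{hatcher2002algebraic}. I expect no new difficulty from the non-Hausdorff setting: the compactness and metrizability the Lebesgue argument needs are properties of $\Delta_q$, not of $\textbf{M}$, and smoothness is preserved because every operator in sight acts by precomposition with affine maps.
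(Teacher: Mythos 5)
Your proposal is correct and is essentially the paper's own argument: the paper simply cites Hatcher's Proposition 2.21 for the singular case and the remark in Félix–Halperin–Thomas (p.\ 135) that the barycentric-subdivision machinery carries over verbatim to smooth chains, which is exactly the argument you spell out. You correctly identify the one point needing verification — that $S$, $T$, $D$, $\rho$ act by precomposition with affine maps and hence preserve smoothness — and that the Lebesgue-number step lives entirely in $\Delta_q$, so the non-Hausdorffness of $\textbf{M}$ is irrelevant.
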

\noindent We may apply the Snake Lemma to the short exact sequences
$$  0 \rightarrow C^\infty_q(M_1 \cap M_2) \xrightarrow{ \ ((\iota_{12})_* , -(f_{12})_*)  \ } C^\infty_q(M_1)\oplus C^\infty_q(M_2) \xrightarrow{ \ (\phi_1)_* + (\phi_2)_*  \ } C^\infty_q(M_1 + M_2) \rightarrow 0 $$
and then use Lemma \ref{LEM: homotopy equivalence between smooth singular and singular chains} to replace each $H^\infty_p(M_1 + M_2)$ with $H^\infty_p(\textbf{M})$ in the resulting Mayer-Vietoris sequence. We will now use this observation to prove an equivalence between the groups $H^\infty_p(\textbf{M})$ and $H_p(\textbf{M})$. 

\begin{theorem}\label{THM: Equivalence between smooth singular and singular}
    Let $\textbf{M}$ be a non-Hausdorff manifold satisfying the properties of Remark \ref{REM: assumptions in this paper}. Then the smooth singular homology group $H^\infty_p(\textbf{M})$ is isomorphic to the singular homology group $H^p(\textbf{M})$ for all $p$ in $\mathbb{N}$.
\end{theorem}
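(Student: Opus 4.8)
The plan is to prove the isomorphism by induction on the number $n$ of Hausdorff pieces $M_i$ in the colimit presentation of $\textbf{M}$, feeding the Mayer--Vietoris sequences assembled above into the five lemma. The vertical maps throughout will be those induced by the chain inclusion $\iota\colon C^\infty_\bullet \hookrightarrow C_\bullet$. The base case $n=1$ is the classical smooth singular theorem for a genuine Hausdorff manifold, \cite[Thm 18.7]{lee2013smooth}, which we will also invoke for open submanifolds of a single $M_i$, since an open subset of a smooth manifold is again a smooth manifold to which that result applies verbatim.

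For the inductive step I would use the inductive presentation of Section 1.1.1: write $\textbf{M} = \textbf{N}\cup_F M_{n+1}$, where $\textbf{N}$ is the colimit of $M_1,\dots,M_n$ and $\kappa\colon\textbf{N}\hookrightarrow\textbf{M}$ is an open embedding, so that $\{\kappa(\textbf{N}),\,\phi_{n+1}(M_{n+1})\}$ is an open cover of $\textbf{M}$. The key observation is that the intersection $\kappa(\textbf{N})\cap\phi_{n+1}(M_{n+1})$ equals $\phi_{n+1}(A)$, where $A=\bigcup_{i\le n}M_{(n+1)i}$ is an open subset of the \emph{Hausdorff} manifold $M_{n+1}$; hence this intersection is itself Hausdorff, and the base case applies to it directly, with no recursion needed on the intersection. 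Consequently $\iota_*$ is an isomorphism on $H^\infty_\bullet$ of $M_{n+1}$ (base case), of the intersection (base case), and of $\textbf{N}$ (inductive hypothesis). I would then stack the smooth-singular Mayer--Vietoris sequence for the cover $\{\kappa(\textbf{N}),\phi_{n+1}(M_{n+1})\}$ — with the term $H^\infty_\bullet(\kappa(\textbf{N})+\phi_{n+1}(M_{n+1}))$ replaced by $H^\infty_\bullet(\textbf{M})$ via Lemma \ref{LEM: homotopy equivalence between smooth singular and singular chains} — above the ordinary singular Mayer--Vietoris sequence, similarly replaced using the classical analogue of that lemma \cite[Prop. 2.21]{hatcher2002algebraic}. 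Since $\iota$ is a morphism of the underlying short exact sequences of chain complexes, the resulting ladder of long exact sequences commutes, including at the connecting homomorphisms by naturality of the snake lemma; and because both replacement isomorphisms are themselves induced by chain-level inclusions that commute with $\iota$, the middle column in the $\textbf{M}$-slot of the ladder is exactly $\iota_*\colon H^\infty_p(\textbf{M})\to H_p(\textbf{M})$. With its four neighbouring vertical maps now known to be isomorphisms, the five lemma yields that $\iota_*$ is an isomorphism in degree $p$ for every $p$, completing the induction.

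The part needing genuine care, rather than routine diagram-chasing, is the strict compatibility of the two Mayer--Vietoris sequences: one must verify that the excision-type homotopy equivalences $C^\infty_\bullet(U_1+U_2)\hookrightarrow C^\infty_\bullet(\textbf{M})$ and $C_\bullet(U_1+U_2)\hookrightarrow C_\bullet(\textbf{M})$ intertwine $\iota$, so that after the replacement step the ladder's middle column really is $\iota_*$ on $\textbf{M}$ rather than a homotopic surrogate, and that the connecting maps of the two long exact sequences agree under $\iota_*$. Both reduce to naturality statements that in the Hausdorff setting depend only on formal properties of chain maps and of the barycentric-subdivision operator — none of which references the Hausdorff axiom — so they transfer without change; but this transfer should be spelled out explicitly. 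A subsidiary point to record is simply that $\kappa(\textbf{N})$ is open in $\textbf{M}$ and that $\kappa(\textbf{N})\cap\phi_{n+1}(M_{n+1})=\phi_{n+1}(A)$, both of which follow immediately from the description of the equivalence relation defining the colimit.
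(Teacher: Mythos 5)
Your proposal is correct and follows essentially the same route as the paper: induction on the number of Hausdorff pieces via the presentation $\textbf{M}\cong\textbf{N}\cup_F M_{n+1}$, the observation that the gluing region $A$ and $M_{n+1}$ are Hausdorff so the classical theorem applies there, replacement of the $C_\bullet(U_1+U_2)$ terms using Lemma \ref{LEM: homotopy equivalence between smooth singular and singular chains} and its singular analogue, and the Five Lemma applied to the ladder of Mayer--Vietoris sequences. Your explicit attention to the naturality of the excision-type replacements with respect to the inclusion $C^\infty_\bullet\hookrightarrow C_\bullet$ is a point the paper treats more briefly, but it is the same argument.
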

\begin{proof}
    We proceed by induction on the size of the indexing set $I$ defining $\textbf{M}$. Suppose first that $\textbf{M}$ is a binary colimit. By the discussion preceeding this theorem, there is a Mayer-Vietoris long exact sequence for smooth singular homology. The standard derivation of the Mayer-Vietoris sequence for singular homology (cf. \cite[§2.2]{hatcher2002algebraic}) also holds in this setting. The inclusions $i: C^\infty_q( \cdot) \rightarrow C_q(\cdot )$ are all homomorphisms that commute with the relevant boundary maps, as depicted below. 

    \begin{center}
\begin{tikzcd}[row sep=3.5em, column sep=1.6em]
0 \arrow[r] & C^\infty_q(M_{12}) \arrow[r] \arrow[d, "i"'] & C^\infty_q(M_{1})\oplus C^\infty_q(M_{2}) \arrow[r] \arrow[d, "i"'] & C^\infty_q(M_{1}+M_{2}) \arrow[r] \arrow[d, "i"'] & 0 \\
0 \arrow[r] & C_q(M_{12}) \arrow[r]                        & C_q(M_{1})\oplus C_q(M_{2}) \arrow[r]                               & C_q(M_{1}+M_{2}) \arrow[r]                        & 0
\end{tikzcd}
    \end{center}
\noindent As such, we may use the naturality of the Snake Lemma to construct the following commutative diagram. \vspace{-3mm}
\begin{center}        
\adjustbox{scale=0.74}{\begin{tikzcd}[row sep=4.2em, column sep=2em]
\cdots \arrow[r] & H^\infty_q(M_{12}) \arrow[d, "i_*"'] \arrow[r] & H^\infty_q(M_1)\oplus H^\infty_q(M_2) \arrow[d, "i_*"'] \arrow[r] & H^\infty_{q}(M_1 + M_2) \arrow[d, "i_*"'] \arrow[r] & H^\infty_{q-1}(M_{12}) \arrow[d, "i_*"'] \arrow[r] & H^\infty_{q-1}(M_1)\oplus H^\infty_{q-1}(M_2) \arrow[d, "i_*"'] \arrow[r] & \cdots \\
\cdots \arrow[r] & H_q(M_{12}) \arrow[r]                         & H_q(M_1)\oplus H_q(M_2) \arrow[r]                                & H_{q}(M_1 + M_2) \arrow[r]                         & H_{q-1}(M_{12}) \arrow[r]                         & H_{q-1}(M_1)\oplus H_{q-1}(M_2) \arrow[r]                                & \cdots
\end{tikzcd}}
    \end{center}

 Theorem 18.7 of \cite{lee2013smooth} ensures that the inclusion maps between Hausdorff components of this diagram are isomorphisms. Since the diagram commutes, we may apply the Five Lemma to conclude that the central $i_*$-map depicted above is also an isomorphism. By applying Lemma \ref{LEM: homotopy equivalence between smooth singular and singular chains} and the analogue for singular chains, it follows that $$  H^\infty_{q}(\textbf{M}) \cong H^\infty_{q}(M_1 + M_2) \cong H_{q}(M_1 + M_2) \cong H_{q}(\textbf{M}),  $$
    as required. \\

    For the inductive case, suppose that smooth singular homology coincides with singular homology for all non-Hausdorff manifolds of size $n$. Suppose now that we have a non-Hausdorff manifold built as a colimit of $(n+1)$-many Hausdorff manifolds $M_i$. We may use the inductive representation of $\textbf{M}$ and rewrite it as a binary adjunction space $\textbf{M} \cong \textbf{N} \cup_F M_{n+1}$, where $\textbf{N}$ is the colimit of the first $n$-many spaces in $\textbf{M}$, and $\textbf{N}$ is glued to $M_{n+1}$ along the subspace $A:= \bigcup_{i\leq n} M_{i(n+1)}$.
    Since the subsets $\textbf{N}$ and $M_{n+1}$ form an open cover of $\textbf{M}$, we may apply the result of Lemma \ref{LEM: homotopy equivalence between smooth singular and singular chains} to conclude that 
    $$  0 \rightarrow C^\infty_q(A) \xrightarrow{ \ ((\iota_{A})_* , -F_*)  \ } C^\infty_q(\textbf{N})\oplus C^\infty_q(M_{n+1}) \xrightarrow{ \ (\varphi_1)_* + (\varphi_2)_*  \ } C^\infty_q(\textbf{N}+M_{n+1}) \rightarrow 0 $$
    is a short exact sequence, where here the maps $\varphi_i$ are the canonical embeddings of $\textbf{N}$ and $M_{n+1}$ into $\textbf{M}$. 
    The inclusions $i$ into singular cochains will still commute with boundary maps, and thus the descended map $i_*$ will still be a chain map for the homologies. Since $A$ and $M_{n+1}$ are Hausdorff manifolds, the $i_*$ map on their cochains will be isomorphisms by \cite{lee2013smooth}. Moreover, the inductive hypothesis tells us that the map $i_*: H^\infty_q(\textbf{N})\rightarrow H_q(\textbf{N})$ is an isomorphism for all $q$. As such, we may again apply the Five Lemma to conclude that $ H^\infty_{q}(\textbf{N} + M_{n+1}) \cong H_{q}(\textbf{N} + M_{n+1})$. The result then follows as an application of Lemma \ref{LEM: homotopy equivalence between smooth singular and singular chains}.   
\end{proof}

\subsection{Integration Over Chains}
With Theorem \ref{THM: Equivalence between smooth singular and singular} in mind, we will now restrict our attention to the smooth singular homology of $\textbf{M}$. As with the Hausdorff case, we may define a notion of \textit{integration over chains}, defined by pulling back differential forms on $\textbf{M}$ to the simplex $\Delta_q$, and then evaluating the integral there. Given a smooth singular chain $c = \sum_{\alpha =1}^m c_\alpha \sigma_\alpha$ and a differential form $\boldsymbol{\omega}$ on $\textbf{M}$, we define the integral of $\boldsymbol{\omega}$ over $c$ as: 
$$  \mathcal{I}(\boldsymbol{\omega}, c) := \sum_{\alpha = 1}^m c_\alpha \int_{\Delta_q} \sigma_\alpha^* \boldsymbol{\omega}. $$

In Section 2, we saw that Stoke's theorem will generally not hold when integrating forms on $\textbf{M}$. However, integration over chains circumvents this issue by evaluating the integral over the simplex $\Delta_q$, interpreted as a submanifold of $\mathbb{R}^q$. In particular, the Hausdorff version of Stoke's theorem applies (cf \cite{lee2013smooth}[Thm. 18.12]), and we may conclude the following. 
\begin{lemma}\label{LEM: stokes theorem fails}
    If $c$ is a smooth singular $q$-chain and $\boldsymbol{\omega}$ is a $(p-1)$-form on $\textbf{M}$, then  $$  \int_c d\boldsymbol{\omega} = \int_{\partial c} \boldsymbol{\omega}. $$  
\end{lemma}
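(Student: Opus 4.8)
The plan is to reduce the statement to the ordinary (Hausdorff) Stokes theorem applied on the standard simplex $\Delta_q$, exactly as in the classical proof. First I would recall that by definition $\int_c d\boldsymbol{\omega} = \sum_\alpha c_\alpha \int_{\Delta_q} \sigma_\alpha^* (d\boldsymbol{\omega})$ for a smooth singular chain $c = \sum_\alpha c_\alpha \sigma_\alpha$. The key point is that each $\sigma_\alpha$ is a \emph{smooth} map $\Delta_q \to \textbf{M}$ (in the sense of having a smooth extension near each point of $\Delta_q$), so pullback commutes with the exterior derivative: $\sigma_\alpha^*(d\boldsymbol{\omega}) = d(\sigma_\alpha^*\boldsymbol{\omega})$. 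This is a purely local statement about smooth maps into $\textbf{M}$, and since $\textbf{M}$ is locally diffeomorphic to a Hausdorff manifold (indeed locally Euclidean), the usual naturality of $d$ under pullback applies without modification; one can also invoke Theorem \ref{THM:exterior derivative properties}, since $d$ on $\textbf{M}$ restricts to the ordinary exterior derivative in any chart.

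Having made this reduction, I would apply Stokes' theorem for the standard simplex $\Delta_q$, viewed as a compact smooth manifold with corners sitting inside $\mathbb{R}^q$ (cf. \cite[Thm 18.12]{lee2013smooth}), to obtain $\int_{\Delta_q} d(\sigma_\alpha^*\boldsymbol{\omega}) = \int_{\partial \Delta_q} \sigma_\alpha^*\boldsymbol{\omega}$, where the right-hand side is a sum over the codimension-one faces of $\Delta_q$ with the induced orientations. Then I would match the faces of $\Delta_q$ with the face maps defining the boundary operator $\partial$: writing $\partial \sigma_\alpha = \sum_{i=0}^q (-1)^i \sigma_\alpha \circ F_{i,q}$ where $F_{i,q}: \Delta_{q-1}\to\Delta_q$ is the $i$-th face inclusion, the restriction of $\sigma_\alpha^*\boldsymbol{\omega}$ to the $i$-th face of $\Delta_q$ pulls back under $F_{i,q}$ to $(\sigma_\alpha\circ F_{i,q})^*\boldsymbol{\omega}$, and the signs from the induced orientation on $\partial\Delta_q$ are exactly the $(-1)^i$ in the definition of $\partial$. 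This is the standard bookkeeping identity $\int_{\partial\Delta_q}\sigma_\alpha^*\boldsymbol{\omega} = \int_{\Delta_{q-1}} (\partial\sigma_\alpha)^*\boldsymbol{\omega} = \int_{\partial\sigma_\alpha}\boldsymbol{\omega}$, and summing over $\alpha$ with coefficients $c_\alpha$ gives $\int_c d\boldsymbol{\omega} = \int_{\partial c}\boldsymbol{\omega}$.

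Since every step after the first reduction takes place entirely within Euclidean simplices and the differential forms $\sigma_\alpha^*\boldsymbol{\omega}$ live on ordinary Hausdorff spaces, no partition-of-unity or non-Hausdorff subtleties intervene; in particular the failure of Stokes on $\textbf{M}$ itself (Lemma \ref{LEM: Stoke theorem counterexample}) is irrelevant here because the Hausdorff-violating submanifolds of $\textbf{M}$ are never ``boundaries'' of any simplex. The only point requiring genuine care — and the one I would state explicitly — is the compatibility $\sigma_\alpha^*(d\boldsymbol{\omega}) = d(\sigma_\alpha^*\boldsymbol{\omega})$: it must be justified that a smooth simplex into $\textbf{M}$ pulls the exterior derivative back correctly even when the image of $\sigma_\alpha$ passes through Hausdorff-violating points. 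This follows because smoothness of $\sigma_\alpha$ is checked in local charts of $\textbf{M}$, each of which is a chart of some $M_i$, and within such a chart $d$ on $\textbf{M}$ agrees with the ordinary exterior derivative by construction; hence the naturality identity is inherited pointwise from the Hausdorff case. Thus the main (and only mild) obstacle is this local-to-global verification, after which the result is immediate.
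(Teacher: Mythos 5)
Your proof is correct and follows essentially the same route as the paper, which simply observes that integration over chains is evaluated on the Euclidean simplex $\Delta_q$ so that the Hausdorff Stokes theorem for chains applies directly. You supply more detail than the paper does — in particular the explicit verification that $\sigma_\alpha^*(d\boldsymbol{\omega}) = d(\sigma_\alpha^*\boldsymbol{\omega})$ holds via locality of $d$ in charts of $\textbf{M}$ — and this is a correct and worthwhile elaboration of the step the paper leaves implicit.
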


As with the Hausdorff case, it can be shown that the map $\mathcal{I}$ descends to cohomology. We will hereafter interpret integration as a map $$ \mathcal{I}: H^p(\textbf{M})\rightarrow H^p_{\infty}(\textbf{M};\mathbb{R}),$$
where the image is the smooth singular cohomology, interpreted as the canonical dual space of $H^\infty_p(\textbf{M})$. 

\subsection{The de Rham Theorem}

We will now combine the results of Section 3 with our discussion of smooth singular homology to construct a non-Hausdorff version of de Rham's theorem. The full argument is detailed below. 

\begin{theorem}\label{THM: de Rhams theorem}
    Suppose that $\textbf{M}$ is a non-Hausdorff manifold satisfying the criteria of \ref{REM: assumptions in this paper} and Theorem \ref{THM: MV long exact sequence for de rham}, and suppose furthermore that all unions of the sets $M_{ij}$ are regular-open sets. Then the integral map $\mathcal{I}: H_{dR}^p(\textbf{M})\rightarrow H^p_\infty(\textbf{M};\mathbb{R})$ is an isomorphism. 
\end{theorem}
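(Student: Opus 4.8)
The plan is to prove the statement by induction on the number $n=|I|$ of Hausdorff pieces from which $\textbf{M}$ is built, in each step comparing the Mayer--Vietoris long exact sequence for de Rham cohomology (Section 3.1) with the one for smooth singular homology (Section 4.1) through the integration pairing $\mathcal{I}$, and finishing with the Five Lemma. For the base case $n=1$ we have $\textbf{M}=M_1$ Hausdorff, so the statement is the classical de Rham theorem \cite[Thm. 18.14]{lee2013smooth}; that $\mathcal{I}$ is a well-defined cochain map is guaranteed by Stokes' theorem for chains, Lemma \ref{LEM: stokes theorem fails}.

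For the inductive step, assume the result for every non-Hausdorff manifold satisfying the hypotheses and built from at most $n$ pieces, and let $\textbf{M}$ be built from $n+1$ pieces. Using the inductive construction of Section 1.1.1 write $\textbf{M}\cong\textbf{N}\cup_F M_{n+1}$, where $\textbf{N}$ is the colimit of the first $n$ pieces and the gluing region is $A=\bigcup_{i\le n}M_{i(n+1)}\subseteq M_{n+1}$. Observe that $A$ is Hausdorff (it is open in $M_{n+1}$) and is regular-open by hypothesis, that $\textbf{N}$ again satisfies the hypotheses of the theorem, and that $\{\textbf{N},M_{n+1}\}$ is an open cover of $\textbf{M}$ with $\textbf{N}\cap M_{n+1}=A$. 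The first task is to produce the two Mayer--Vietoris sequences for this cover. On the de Rham side, the derivation of Section 3.1 uses only the fibre-product structure of $\Omega^\bullet$ (Theorem \ref{THM: sections are fibred product} together with Lemma \ref{LEM: fibred product for differential forms with boundary}) and extension-by-zero from the closed regular domain $\overline{A}$ into both $M_{n+1}$ and $\textbf{N}$; it therefore goes through verbatim with $\textbf{N}$ playing the role of one Hausdorff piece — this is precisely the kind of binary decomposition already exploited in the proof of Theorem \ref{THM: MV long exact sequence for de rham} — and since $A$ is regular-open we may replace $\overline{A}$ by $A$ to obtain
$$ \cdots \to H^q_{dR}(\textbf{M}) \to H^q_{dR}(\textbf{N})\oplus H^q_{dR}(M_{n+1}) \to H^q_{dR}(A) \xrightarrow{\ d^*\ } H^{q+1}_{dR}(\textbf{M}) \to \cdots $$
On the smooth singular side, Lemma \ref{LEM: homotopy equivalence between smooth singular and singular chains} applied to the cover $\{\textbf{N},M_{n+1}\}$, together with the Snake Lemma applied to $0\to C^\infty_q(A)\to C^\infty_q(\textbf{N})\oplus C^\infty_q(M_{n+1})\to C^\infty_q(\textbf{N}+M_{n+1})\to 0$, yields a long exact sequence in $H^\infty_\bullet$; dualising with $\mathrm{Hom}_{\mathbb{R}}(-,\mathbb{R})$ (exact, since we work over a field) produces the corresponding long exact sequence in smooth singular cohomology.

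The second task is to check that integration over chains defines a morphism between these two long exact sequences, i.e. a commutative ladder with vertical maps $\mathcal{I}$. Commutativity of the squares built from the restriction and difference maps is immediate from $\sigma^*(\boldsymbol{\omega}|_U)=(\iota\circ\sigma)^*\boldsymbol{\omega}$. The one substantive point is commutativity of the square relating the de Rham connecting map $d^*$ to the singular connecting map $\partial^*$: tracing a class through both Snake-Lemma constructions and invoking Stokes for chains (Lemma \ref{LEM: stokes theorem fails}) reduces this to a boundary computation on the standard simplices, exactly as in the Hausdorff proof; the only deviation is that our de Rham connecting map is realised through extension-by-zero rather than a partition of unity, but this is harmless since the Snake-Lemma connecting homomorphism does not depend on the chosen set-theoretic splitting. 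By the inductive hypothesis $\mathcal{I}$ is an isomorphism for $\textbf{N}$, and by the classical de Rham theorem it is an isomorphism for the Hausdorff manifolds $M_{n+1}$ and $A$; hence in the ladder four out of every five consecutive vertical arrows are isomorphisms, and the Five Lemma gives that $\mathcal{I}\colon H^p_{dR}(\textbf{M})\to H^p_\infty(\textbf{M};\mathbb{R})$ is an isomorphism, completing the induction.

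I expect the main obstacle to be the second task: namely, verifying that the binary de Rham Mayer--Vietoris sequence is genuinely available for the decomposition $\textbf{M}=\textbf{N}\cup_F M_{n+1}$ when $\textbf{N}$ is itself non-Hausdorff — this is where the hypothesis that \emph{all} unions of the sets $M_{ij}$ are regular-open is consumed, guaranteeing that the gluing region $A$ (and its image in $\textbf{N}$) enjoys the same closure and regular-domain properties as an ordinary gluing region in the sense of Remark \ref{REM: assumptions in this paper}, so that extension-by-zero is legitimate — together with pinning down the commutativity of the connecting-homomorphism square. Once those two points are secured, the remainder is a formal consequence of the Five Lemma applied to the ladder built from the two Mayer--Vietoris sequences already established.
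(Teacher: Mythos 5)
Your proposal is correct and follows essentially the same route as the paper: induction on the number of Hausdorff pieces via the inductive decomposition $\textbf{M}\cong\textbf{N}\cup_F M_{n+1}$, a ladder of the de Rham and smooth singular Mayer--Vietoris sequences with $\mathcal{I}$ as the vertical maps (commuting with connecting homomorphisms by Stokes for chains), the regular-open hypothesis used exactly where you place it to replace $\overline{A}$ by $A$, and the Five Lemma to close the induction. The only cosmetic difference is that you start the induction at $n=1$ while the paper treats the binary case $n=2$ directly as its base; the content is identical.
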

\begin{proof}
Our argument will be similar in structure to that of Theorem \ref{THM: Equivalence between smooth singular and singular}, that is, we will proceed by induction on the size of the indexing set $I$. Suppose first that $\textbf{M}$ is a colimit of two Hausdorff manifolds $M_1$ and $M_2$. According to our discussion in Section 3, we may construct a Mayer-Vietoris sequence for the de Rham cohomology of $\textbf{M}$ by using the cohomologies of the $M_i$, and the region $M_{12}$. Following \cite{lee2013smooth}[§18.4], we note that the descended integration maps $\mathcal{I}$ commute with smooth maps and the connecting homomorphisms of de Rham and smooth singular cohomologies. As such, we have the following  commutative diagram. 
\begin{center}        
\adjustbox{scale=0.71}{\begin{tikzcd}[row sep=4.6em, column sep=1.5em]
\cdots \arrow[r] & H_{dR}^{q-1}(M_1)\oplus H_{dR}^{q-1}(M_2) \arrow[d, "\mathcal{I}"'] \arrow[r] &  H_{dR}^{q-1}(M_{12}) \arrow[d, "\mathcal{I}"'] \arrow[r] & H_{dR}^{q}(\textbf{M}) \arrow[d, "\mathcal{I}"'] \arrow[r] & H_{dR}^{q}(M_1)\oplus H_{dR}^{q}(M_2) \arrow[d, "\mathcal{I}"'] \arrow[r] & H_{dR}^{q}(M_{12})  \arrow[d, "\mathcal{I}"'] \arrow[r] & \cdots \\
\cdots \arrow[r] & H^{q-1}_{\infty}(M_1;\mathbb{R})\oplus H^{q-1}_{\infty}(M_2;\mathbb{R})  \arrow[r]                         &  H^{q-1}_{\infty}(M_{12};\mathbb{R}) \arrow[r]             & H^{q}_{\infty}(\textbf{M};\mathbb{R}) \arrow[r]                         & H^{q}_{\infty}(M_1;\mathbb{R})\oplus H^{q}_{\infty}(M_2;\mathbb{R}) \arrow[r]                         & H^{q}_{\infty}(M_{12};\mathbb{R})  \arrow[r]                                & \cdots
\end{tikzcd}}
    \end{center}
We may then apply the Hausdorff version of de Rham's theorem to all of the Hausdorff columns of this commutative diagram. It then follows from the Five Lemma that the map $\mathcal{I}:H_{dR}^{q}(\textbf{M}) \rightarrow H^{q}_{\infty}(\textbf{M};\mathbb{R})$ is an isomorphism.\\

Suppose now that the map $\mathcal{I}$ is an isomorphism for all non-Hausdorff manifolds constructed as a colimit of $n$-many Hausdorff manifolds $M_i$. Let $\textbf{M}$ be a non-Hausdorff manifold with indexing set of size $(n+1)$. Similarly to Theorem \ref{THM: Equivalence between smooth singular and singular}, we will consider the inductive colimit $\textbf{M} \cong \textbf{N}\cup_F M_{n+1}$. As with the binary case, we again have a Mayer-Vietoris sequence for the de Rham and smooth singular cohomologies of $\textbf{M}$, this time using the open cover $\{\textbf{N}, M_{n+1} \}$.\footnote{Technically, the Mayer-Vietoris sequence that we will obtain will have terms of the form $H^q(\overline{A})$, however, due to our assumption that $A$ is regular-open, we may replace such terms with $H^q(\overline{A})$.} Again by Theorem 18.12 of \cite{lee2013smooth}, the descended integral map $\mathcal{I}$ commutes with pullbacks and connecting homomorphisms, so we may proceed as in the binary case and interpret $\mathcal{I}$ as a chain map between the two Mayer-Vietoris sequences. The result then follows from an application of the induction hypothesis applied to $\textbf{N}$ and $A$, together with the Five Lemma. 
\end{proof}

Using the above argument, together with Theorem \ref{THM: Equivalence between smooth singular and singular}, we obtain the non-Hausdorff de Rham's theorem as an immediate corollary. 
 
\subsubsection*{The Line with Two Origins}
In the de Rham theorem of \ref{THM: de Rhams theorem} we assumed that our non-Hausdorff manifolds were formed according to the assumptions of \ref{THM: MV long exact sequence for de rham}. We will now illustrate the necessity of these assumptions with an example. Consider the line with two origins, constructed from two copies of $\mathbb{R}$ glued along the open subset $(-\infty, 0)\cup(0,\infty)$ as in Figure \ref{fig: line with two origins}. Observe that the open set $M_{12}$ is not regular open in $\mathbb{R}$. In particular, this means that a Mayer-Vietoris sequence for the de Rham cohomology will take the form $$ 0 \rightarrow \mathbb{R} \rightarrow \mathbb{R}\oplus\mathbb{R} \rightarrow \mathbb{R} \rightarrow H_{dR}^1(\textbf{M}) \rightarrow 0 \rightarrow 0  $$
where here in the third term we have used that $\overline{M_{12}}=\mathbb{R}$. Exactness of this sequence ensures that the $H_{dR}^1(\textbf{M})=0$. \\

On the other hand, we may compute the singular homology for this space by alternate means. By inspection it should be clear that the first singular homology of $\textbf{M}$ should carry some extra term, since $H^S_1(\textbf{M})$ is the abelianization of the first fundamental group of $\textbf{M}$, and here there are non-contractible loops that wrap around the two copies of the origin. Alternatively, we may use a Mayer-Vietoris sequence for singular homology and then dualize it to express the group $H^1_S(\textbf{M},\mathbb{R})$.  We do not need to use the boundary of $M_{12}$ in this sequence, so the simplicial cohomology of $\textbf{M}$ may be described with the following long exact sequence 
$$  0\rightarrow \mathbb{R} \rightarrow \mathbb{R}\oplus\mathbb{R} \rightarrow \mathbb{R}\oplus \mathbb{R} \rightarrow H^1_S(\textbf{M}, \mathbb{R}) \rightarrow 0 \rightarrow 0        $$
where we observe an extra copy of $\mathbb{R}$ in the third term, due to the two connected components of $M_{12}$. By exactness, it follows that $H^1_S(\textbf{M},\mathbb{R}) = \mathbb{R}$, which differs from the associated de Rham cohomology. \\

Interestingly, the line with two origins may also serve as a counterexample for the homotopy invariance of non-Hausdorff de Rham cohomology. Indeed, we may consider a manifold $\textbf{N}$ in which the two copies of the real line are glued along the sets $B=(-\infty, -1)\cup (1,\infty)$. This will yield a non-Hausdorff manifold that is homotopy equivalent to $\textbf{M}$, yet is constructed by gluing along regular open sets. As such, the manifold $\textbf{N}$ falls under the scope of the non-Hausdorff de Rham theorem, from which we may deduce that $H_{dR}^1(\textbf{N})=\mathbb{R}\neq 0 = H_{dR}^1(\textbf{M})$. \\

\section{The Gauss-Bonnet Theorem}

For a compact Hausdorff surface $\Sigma$ with boundary, the Gauss-Bonnet theorem relates the total scalar curvature to the Euler characteristic via the equality $$  2\pi \chi(\Sigma) = \frac{1}{2}\int_\Sigma R dA + \int_{\partial \Sigma} \kappa d\gamma,        $$
where here the latter term computes the geodesic curvature of the boundary of $\Sigma$. In this section we will prove a non-Hausdorff version of this statement. In order to emphasise that we are working with surfaces, throughout this section we will denote our non-Hausdorff manifold by $\boldsymbol{\Sigma}$, with its Hausdorff submanifolds denoted similarly. We will see that in a manner similar to that of Theorem \ref{LEM: stokes theorem fails}, even if the manifold $\boldsymbol{\Sigma}$ has no boundary, the total curvature of $\boldsymbol{\Sigma}$ will take additional contributions from the internal Hausdorff-violating submanifolds. In order to properly understand this theorem, we will start with a brief discussion of curvature in non-Hausdorff manifolds. 

\subsection{Scalar Curvature for Non-Hausdorff Manifolds}
According to Theorem \ref{THM: sections are fibred product} the sections of the tensor bundle $T^{(2,0)}\boldsymbol{\Sigma}$ are a fibre product of the spaces $\Gamma(T^{(2,0)}\Sigma_i)$. As such, we may describe a Riemmanian metric on $\boldsymbol{\Sigma}$ using a collection of metrics $g_i$ defined on each $\Sigma_i$, such that the isometry condition $g_i = f_{ij}^* \circ g_j$ holds on $\Sigma_{ij}$ for all $i,j$ in $I$. Explicitly, we have the following result, paraphrased from \cite{oconnell2023vector}. 

\begin{lemma}\label{LEM: Riemannian non-hausdorff manifold}
    Let $\boldsymbol{\Sigma}$ be a non-Hausdorff manifolds built according to \ref{REM: assumptions in this paper}. If each $\Sigma_i$ is equipped with a Riemannian metric $g_i$ such that $f_{ij}:\Sigma_{ij}\rightarrow M_j$ is an isometric embedding, then $\boldsymbol{\Sigma}$ admits a Riemannian metric $\textbf{g}$ such that the canonical maps $\phi_i: \Sigma_i \rightarrow \boldsymbol{\Sigma}$ are open isometric embeddings. 
\end{lemma}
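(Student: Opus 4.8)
The plan is to produce $\mathbf{g}$ as the global section of the tensor bundle $T^{(2,0)}\boldsymbol{\Sigma}$ delivered by the fibre-product formula of Theorem \ref{THM: sections are fibred product}, and then to verify that the section so obtained is genuinely a Riemannian metric with the stated compatibility with the maps $\phi_i$. The first step is to observe that the hypothesis that each $f_{ij}\colon\Sigma_{ij}\to M_j$ is an isometric embedding is precisely the assertion that $g_i = f_{ij}^{*}g_j$ on $\Sigma_{ij}$ for all $i,j\in I$. By Theorem \ref{THM: tensor bundles canonical form} the bundle $T^{(2,0)}\boldsymbol{\Sigma}$ is canonically a colimit of the bundles $T^{(2,0)}\Sigma_i$, and under this identification the relation $g_i = f_{ij}^{*}g_j$ is exactly the compatibility condition $s_i = s_j\circ f_{ij}$ defining membership in the fibre product $\prod_{\mathcal F}\Gamma\bigl(T^{(2,0)}\Sigma_i\bigr)$. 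Hence the tuple $(g_1,\dots,g_n)$ lies in that fibre product, and Theorem \ref{THM: sections are fibred product} produces a smooth global section $\mathbf{g}\in\Gamma\bigl(T^{(2,0)}\boldsymbol{\Sigma}\bigr)$ with $\phi_i^{*}\mathbf{g}=\mathbf{g}|_{M_i}=g_i$ for every $i$.

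It then remains to check that $\mathbf{g}_{[x,i]}$ is a symmetric, positive-definite bilinear form on $T_{[x,i]}\boldsymbol{\Sigma}$ at every point of $\boldsymbol{\Sigma}$. Both conditions are fibrewise, and since the $\phi_i(\Sigma_i)$ cover $\boldsymbol{\Sigma}$ every point has the form $[x,i]$; by the previous paragraph $\mathbf{g}_{[x,i]}$ is the transport of $(g_i)_x$ along the linear isomorphism $d\phi_i\colon T_x\Sigma_i\to T_{[x,i]}\boldsymbol{\Sigma}$, and symmetry and positive-definiteness are preserved under transport along a linear isomorphism. Well-definedness of this description on overlaps — the case $[x,i]=[y,j]$ with $x\in\Sigma_{ij}$ and $f_{ij}(x)=y$ — is automatic, being built into the fibre-product structure; concretely it follows from $g_i = f_{ij}^{*}g_j$ together with $d\phi_i = d\phi_j\circ df_{ij}$. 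Finally, each $\phi_i$ is already a smooth open embedding by Lemmas \ref{LEM: fij and Aij open implies phi maps open and M gen manifold} and \ref{LEM: non-Haus smooth manifold}, so the identity $\phi_i^{*}\mathbf{g}=g_i$ established above says exactly that $\phi_i$ is an isometry onto the open submanifold $\phi_i(\Sigma_i)\subseteq\boldsymbol{\Sigma}$, as required.

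I do not expect a genuine obstacle: the whole argument is a direct application of the fibre-product machinery of Section 1, and the only point demanding a little care is the translation in the first step of ``isometric embedding'' into the section-compatibility condition $g_i = f_{ij}^{*}g_j$, i.e.\ keeping the pullback convention consistent with the gluing morphisms of Theorem \ref{THM: tensor bundles canonical form}. A reader who prefers to avoid the abstract colimit language can instead \emph{define} $\mathbf{g}_{[x,i]}:=\bigl((d\phi_i)^{-1}\bigr)^{*}(g_i)_x$, verify directly from conditions A1--A3 of Definition \ref{DEF: Adjunctive System} and the isometry hypothesis that this is independent of the representative $(x,i)$, and then check smoothness chart-by-chart in the atlas $\mathcal{A}$; this reconstructs the same metric $\mathbf{g}$.
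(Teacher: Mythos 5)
Your proposal is correct and follows exactly the route the paper intends: the lemma is stated without proof (it is paraphrased from the author's earlier work), but the surrounding discussion makes clear that the intended argument is precisely the one you give — translate the isometric-embedding hypothesis into the compatibility condition $g_i = f_{ij}^*g_j$, invoke Theorems \ref{THM: tensor bundles canonical form} and \ref{THM: sections are fibred product} to assemble the global section $\mathbf{g}$, and check symmetry and positive-definiteness fibrewise via the linear isomorphisms $d\phi_i$. No gaps.
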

In coordinate-free notation, a connection on $\boldsymbol{\Sigma}$ may be defined as an operator $$ \nabla: \mathfrak{X}(\boldsymbol{\Sigma}) \times \mathfrak{X}(\boldsymbol{\Sigma}) \rightarrow \mathfrak{X}(\boldsymbol{\Sigma}), \ (\textbf{X},\textbf{Y})\mapsto \nabla_\textbf{X} \textbf{Y}  $$
that is $C^\infty(\boldsymbol{\Sigma})$-linear in the first argument, and satisfies the Liebniz property $$ \nabla_\textbf{X}(\textbf{r}\textbf{Y}) = \textbf{r}\nabla_\textbf{X}(\textbf{Y}) + \textbf{X}(\textbf{r})\textbf{Y}  $$
in the second argument. Given a metric $\textbf{g}$ on $\boldsymbol{\Sigma}$, we may define the Levi-Civita connection as: $$ \boldsymbol{\nabla}(\textbf{X},\textbf{Y})([x,i]) := \nabla^i(X_i, Y_i)(x),$$
where $\nabla^i$ is the Levi-Civita connection for each $(\Sigma_i, g_i)$. Note that the isometries $\overline{f_{ij}}$ together with the uniqueness of Hausdorff Levi-Civita connections ensure that the above definition is well-defined and unique. The following result summarises some relevant properties of $\boldsymbol{\nabla}$.

\begin{lemma}
    Let $\boldsymbol{\nabla}$ be the connection defined above. Then 
    \begin{enumerate}
        \item $\boldsymbol{\nabla}$ is a local operator
        \item $\boldsymbol{\nabla}$ is compatible with the metric $\textbf{g}$
        \item $\boldsymbol{\nabla}$ is torsion-free. 
    \end{enumerate}
\end{lemma}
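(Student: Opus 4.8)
The plan is to verify each of the three properties by reducing to the corresponding (known) property of the Hausdorff Levi-Civita connections $\nabla^i$, exploiting the fibre-product description of vector fields $\mathfrak{X}(\boldsymbol{\Sigma}) \cong \prod_{\mathcal{F}} \mathfrak{X}(\Sigma_i)$ from Theorem \ref{THM: sections are fibred product} and the pointwise definition $\boldsymbol{\nabla}(\textbf{X},\textbf{Y})([x,i]) = \nabla^i(X_i,Y_i)(x)$. Throughout, I will write $\textbf{X} = (X_1,\dots,X_n)$, $\textbf{Y} = (Y_1,\dots,Y_n)$ and use that a vector field (or function, or metric) on $\boldsymbol{\Sigma}$ is zero/constant/etc. on an open set $U$ if and only if its restriction to each $M_i$ is so on $U \cap M_i$; this is exactly the mechanism already used in Lemma \ref{LEM: derivations are local operators} and Theorem \ref{THM:exterior derivative properties}.

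For locality (item 1): suppose $\textbf{X}$ vanishes on an open set $U$. For any point $[x,i] \in U$, the restriction $X_i$ vanishes on the open set $U \cap M_i \subseteq \Sigma_i$, so by locality of the Hausdorff connection $\nabla^i$ we get $\nabla^i(X_i,Y_i)(x) = 0$; hence $\boldsymbol{\nabla}_{\textbf{X}}\textbf{Y}$ vanishes on $U$. The same argument with the roles adjusted (using $C^\infty(\boldsymbol{\Sigma})$-linearity in the first slot and the Leibniz property in the second, together with locality of each $\nabla^i$ in its second argument) shows that if $\textbf{Y}$ vanishes on $U$ then so does $\boldsymbol{\nabla}_{\textbf{X}}\textbf{Y}$ on $U$. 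For metric compatibility (item 2): the identity to check is $\textbf{X}(\textbf{g}(\textbf{Y},\textbf{Z})) = \textbf{g}(\boldsymbol{\nabla}_{\textbf{X}}\textbf{Y},\textbf{Z}) + \textbf{g}(\textbf{Y},\boldsymbol{\nabla}_{\textbf{X}}\textbf{Z})$; evaluating at $[x,i]$, every term is by definition the corresponding expression in $(\Sigma_i,g_i)$ applied to $(X_i,Y_i,Z_i)$ at $x$ — using here that $\textbf{g}([x,i]) = g_i(x)$ and that $\textbf{X}$ acting on a function is computed slot-wise as $(X_1 r_1,\dots,X_n r_n)$ from the end of Section 2.1 — so the equality reduces to the metric compatibility of $\nabla^i$. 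Torsion-freeness (item 3) is identical: $\boldsymbol{\nabla}_{\textbf{X}}\textbf{Y} - \boldsymbol{\nabla}_{\textbf{Y}}\textbf{X} - [\textbf{X},\textbf{Y}]$ evaluated at $[x,i]$ equals $\nabla^i(X_i,Y_i)(x) - \nabla^i(Y_i,X_i)(x) - [X_i,Y_i](x)$, which vanishes since $\nabla^i$ is torsion-free; here I use the observation from the Lie derivative discussion that the bracket on $\boldsymbol{\Sigma}$ is computed component-wise, $[\textbf{X},\textbf{Y}] = ([X_1,Y_1],\dots,[X_n,Y_n])$.

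The only genuinely non-routine point — and the one I would spell out most carefully — is the well-definedness already asserted before the lemma: one must check that the slot-wise formula $\boldsymbol{\nabla}(\textbf{X},\textbf{Y})([x,i]) = \nabla^i(X_i,Y_i)(x)$ actually lands in the fibre product, i.e. that the resulting tuple $(\nabla^1(X_1,Y_1),\dots,\nabla^n(X_n,Y_n))$ satisfies the compatibility condition $\nabla^i(X_i,Y_i) = f_{ij}^*\bigl(\nabla^j(X_j,Y_j)\bigr)$ on $\Sigma_{ij}$. This is where the hypothesis that the $\overline{f_{ij}}$ are isometries (Remark \ref{REM: assumptions in this paper}, Lemma \ref{LEM: Riemannian non-hausdorff manifold}) is essential: an isometric diffeomorphism intertwines the respective Levi-Civita connections, so $f_{ij}$ carries $\nabla^j$ to $\nabla^i$ on the overlap, and since $X_i = f_{ij}^* X_j$ and $Y_i = f_{ij}^* Y_j$ there by the fibre-product description, the two sides agree. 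Once this consistency is in hand, all three properties above follow purely formally from the pointwise definition and the corresponding Hausdorff facts, with no further analysis near the Hausdorff-violating boundary required — the behaviour there is pinned down automatically by continuity and the agreement on the dense overlap, exactly as in Lemma \ref{LEM: fibred product for differential forms with boundary}.
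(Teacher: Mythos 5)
Your proposal is correct and follows essentially the same route as the paper: reduce each property to the corresponding property of the Hausdorff Levi--Civita connections $\nabla^i$ via the fibre-product/pointwise description, with locality handled as in Theorem \ref{THM:exterior derivative properties} and the bracket computed component-wise. The well-definedness point you flag is indeed the one subtle step, and the paper disposes of it just before the lemma by the same argument (the $\overline{f_{ij}}$ are isometries, so uniqueness of the Hausdorff Levi--Civita connection forces agreement on overlaps).
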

\begin{proof}
    Semi-locality of $\boldsymbol{\nabla}$ is established by construction, and locality follows from an argument similar to the exterior derivative (cf Theorem \ref{THM:exterior derivative properties}). For readability we will illustrate the latter two properties for the case that $I=2$, since the general case simply requires more indices. For compatibility with the metric, we have 
    \begin{align*}
        \textbf{X}(\textbf{g}(\textbf{Y},\textbf{Z})) & = \big(X_1(g_1(Y_1,Z_1)), X_2(g_2(Y_2,Z_2)) \big) \\
        & = \left( g_1(\nabla^1_{X_1}Y_1,Z_1) + g_1(Y_1,\nabla^1_{X_1}Z_1) , g_2(\nabla^2_{X_2}Y_2,Z_2) + g_2(Y_2,\nabla^2_{X_2}Z_2) \right) \\
        & = \left( g_1(\nabla^1_{X_1}Y_1,Z_1), g_2(\nabla^2_{X_2}Y_2,Z_2) \right)   + \left(g_1(Y_1,\nabla^1_{X_1}Z_1), g_2(Y_2,\nabla^2_{X_2}Z_2) \right) \\
        & =  \textbf{g}(\boldsymbol{\nabla}_{\textbf{X}}\textbf{Y},\textbf{Z}) +\textbf{g}(\textbf{Y},\boldsymbol{\nabla}_{\textbf{X}}\textbf{Z}),
    \end{align*}
    where in the second line we have used the metric compatibility of the Hausdorff Levi-Civita connections. To see that $\boldsymbol{\nabla}$ is torsion-free, we may use the expression for the Lie bracket $$ [\textbf{X},\textbf{Y}] = \left([X_1,Y_1], [X_2,Y_2] \right)     $$ derived in Section 2.1. We may then use the Hausdorff Levi-Civita connections to deduce that\begin{align*}
        \nabla_\textbf{X} \textbf{Y} - \nabla_\textbf{Y} \textbf{X} & = (\nabla^1_{X_1} Y_1 - \nabla^1_{Y_1} X_1,\nabla^2_{X_2} Y_2 - \nabla^2_{Y_2} X_2 ) \\
        & = \left( [X_1,Y_1], [X_2,Y_2] \right) \\
        & = [\textbf{X},\textbf{Y}], 
    \end{align*}
    as required. 
\end{proof}
Now that we have established a Levi-Civita connection for $\boldsymbol{\Sigma}$, we may define the Riemann curvature tensor as in the Hausdorff case. Given the expressions for $\boldsymbol{\nabla}$ and the Lie derivative on $\boldsymbol{\Sigma}$, we see that the Riemann curvature tensor for $\boldsymbol{\Sigma}$ will again be a fibred product of the same tensors defined on each $\Sigma_i$. Again, this will be a local operator since both $\boldsymbol{\nabla}$ and the Lie derivative are. Since the maps $f_{ij}$ are isometries and the connection $\boldsymbol{\nabla}$ is local, we may compute the Ricci tensor, and consequently the Ricci scalar $\textbf{R}$  on $\boldsymbol{\Sigma}$ will be locally equivalent to the same value computed within each Hausdorff manifold $\Sigma_i$.  \\

By a similar line of reasoning, the volume form of $\boldsymbol{\Sigma}$ can be described according to Lemma \ref{LEM: orientable manifold}. We may thus write the total scalar curvature of $\boldsymbol{\Sigma}$ as 
$$ \int_{\boldsymbol{\Sigma}} \textbf{R} dA = \int_{\boldsymbol{\Sigma}} \textbf{R} \sqrt{\det{(g)}} d^2x.$$
Using Theorem \ref{THM: Inductive form of integral}, we may always decompose this integral into a sub-additive collection of scalar curvatures defined on each $\Sigma_i$ and their various intersections.

\subsection{The Euler Characteristic}
Before getting to a proof of the Gauss-Bonnet theorem, we will first describe the Euler characteristic of the manifold $\boldsymbol{\Sigma}$. Since simplicial homology is not available for non-Hausdorff manifolds, we will instead define the Euler characteristic as $$\chi(\boldsymbol{\Sigma}) := \sum_{q=0}^d (-1)^q  \ b_q(\boldsymbol{\Sigma}),$$ where we define these Betti numbers via singular homology groups. According to this definition, the Euler characteristic will satisfy the following sub-additive property.

\begin{theorem}\label{THM: Euler characteristic}
The Euler characteristic of $\boldsymbol{\Sigma}$ equals $$\chi(\boldsymbol{\Sigma}) = \sum_{p\in I} \sum_{i_1,...,i_p \in I} (-1)^{p+1}\chi(\Sigma_{i_1,...,i_p}).$$ 
\end{theorem}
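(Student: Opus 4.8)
The plan is to deduce the stated inclusion–exclusion formula for $\chi(\boldsymbol{\Sigma})$ from the Mayer–Vietoris long exact sequences in singular homology, exactly as one proves the analogous additivity of Euler characteristics in the Hausdorff setting. The starting point is the elementary fact that if $0 \to V_k \to V_{k-1} \to \cdots \to V_0 \to 0$ is an exact sequence of finite-dimensional vector spaces, then $\sum_j (-1)^j \dim V_j = 0$; applied to a long exact sequence of a pair or triple, this gives additivity of the alternating sum of Betti numbers. So the first step is to record that the (singular) homology of $\boldsymbol{\Sigma}$, and of each finite intersection $\Sigma_{i_1\cdots i_p}$, is finitely generated in each degree and vanishes above dimension $d=2$, so that $\chi$ is well-defined for all the spaces involved. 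This uses that $\boldsymbol{\Sigma}$ is a compact (Theorem \ref{THM: Summary of topological properties}) locally-Euclidean second-countable space, hence has the homotopy type of a finite CW complex — or, more in keeping with the paper, one can argue inductively using the Mayer–Vietoris sequences themselves together with the known finiteness for the Hausdorff $\Sigma_i$.

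The main step is an induction on $|I|=n$. For $n=1$ the statement is trivial. For the inductive step I would use the inductive colimit decomposition recalled in Section 1: write $\boldsymbol{\Sigma} \cong \textbf{N}\cup_F \Sigma_{n+1}$, where $\textbf{N}$ is the colimit of $\Sigma_1,\dots,\Sigma_n$ and the gluing region inside $\Sigma_{n+1}$ is $A = \bigcup_{i\le n}\Sigma_{i(n+1)}$. Since $\{\textbf{N}, \Sigma_{n+1}\}$ is an open cover of $\boldsymbol{\Sigma}$, the singular Mayer–Vietoris sequence (which holds with no Hausdorff hypotheses) gives
$$ \chi(\boldsymbol{\Sigma}) = \chi(\textbf{N}) + \chi(\Sigma_{n+1}) - \chi(A). $$
Now apply the induction hypothesis to $\textbf{N}$ (a colimit of $n$ manifolds) to expand $\chi(\textbf{N})$, and observe that $A$ is itself the union $\bigcup_{i\le n}\Sigma_{i(n+1)}$ of open subsets of the Hausdorff manifold $\Sigma_{n+1}$, whose $p$-fold intersections are exactly $\Sigma_{i_1\cdots i_p (n+1)}$; a second induction (the ordinary Hausdorff inclusion–exclusion for $\chi$ of a finite open cover, again via Mayer–Vietoris) expands $\chi(A)$. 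Substituting both expansions into the displayed three-term relation and collecting the terms indexed by subsets of $\{1,\dots,n+1\}$ — those not containing $n+1$ coming from $\chi(\textbf{N})$, those equal to $\{n+1\}$ from $\chi(\Sigma_{n+1})$, and those containing $n+1$ together with at least one smaller index from $-\chi(A)$ — yields precisely the claimed formula with $n+1$ in place of $n$. It is worth noting that one must use here that each $\Sigma_{i_1\cdots i_p}$ is genuinely the intersection of the $\Sigma_{i_j}$ inside $\boldsymbol{\Sigma}$ and carries the expected homotopy type, which is guaranteed by the assumptions of Remark \ref{REM: assumptions in this paper}.

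The part requiring the most care is bookkeeping rather than deep input: one must check that the two nested inductions produce each subset $\{i_1 < \cdots < i_p\} \subseteq \{1,\dots,n+1\}$ with the correct sign $(-1)^{p+1}$ and with no double counting. The cleanest way to organise this is to prove, as a lemma, the general statement that for any finite open cover $\mathcal{U}=\{U_i\}_{i\in I}$ of a space $Y$ whose finite intersections all have finitely-generated homology, $\chi(Y) = \sum_{\varnothing \ne S \subseteq I}(-1)^{|S|+1}\chi(U_S)$, where $U_S = \bigcap_{i\in S}U_i$ — this is the classical Mayer–Vietoris inclusion–exclusion and its proof is a single induction on $|I|$ — and then simply apply it to the cover $\{\Sigma_i\}$ of $\boldsymbol{\Sigma}$. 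This reduces the whole proof to (i) verifying the hypotheses of that lemma for $\boldsymbol{\Sigma}$ and (ii) quoting it; the only genuinely non-Hausdorff ingredient is the finiteness/well-definedness of $\chi(\boldsymbol{\Sigma})$, for which the compactness statement of Theorem \ref{THM: Summary of topological properties} and the finite open cover by the $\Sigma_i$ suffice.
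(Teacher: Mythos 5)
Your proposal is correct and follows essentially the same route as the paper: induction on $|I|$ using the inductive colimit decomposition $\boldsymbol{\Sigma}\cong \textbf{N}\cup_F \Sigma_{n+1}$, the Mayer--Vietoris relation $\chi(\boldsymbol{\Sigma})=\chi(\textbf{N})+\chi(\Sigma_{n+1})-\chi(A)$ obtained from the vanishing alternating sum of dimensions in a long exact sequence, and the expansion of $\chi(\textbf{N})$ and $\chi(A)$ by the inductive hypothesis and Hausdorff inclusion--exclusion respectively. Your additional remark on verifying that all Betti numbers involved are finite (so that $\chi$ is well-defined) is a point the paper leaves implicit, but it does not change the argument.
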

\begin{proof}
We argue by induction on the size of the indexing set $I$. Suppose first that $I=2$, that is, that $\boldsymbol{\Sigma}$ is a binary colimit. According to the results of Section 4.1, we may construct a long exact sequence that relates the spaces $H^q(\boldsymbol{\Sigma})$ to the cohomologies of $\Sigma_1$, $\Sigma_2$ and $\Sigma_{12}$. Since this sequence is a long exact sequence of vector spaces, the alternating sum of the dimensions of the entries always equals zero. Spelling this out, we have: 
\begin{align*}
0 & = \sum_q (-1)^q \big(b_q(\boldsymbol{\Sigma}) - b_q(\Sigma_1) - b_q(\Sigma_2) + b_q(\Sigma_{12})\big) \\
& = \sum_q (-1)^q b_q(\boldsymbol{\Sigma}) - \sum_q (-1)^q\big(b_q(\Sigma_1) + b_q(\Sigma_2)\big) + \sum_q (-1)^q b_q(\Sigma_{12}) \\
& = \chi(\boldsymbol{\Sigma}) - \chi(\Sigma_1) - \chi(\Sigma_2) + \chi(\Sigma_{12}) 
\end{align*}
from which the equality follows. \\

Suppose now that the hypothesis holds for all colimits of size $I = n$, and let $\boldsymbol{\Sigma}$ have indexing set size $n+1$. As with the proofs of Section 4, we may view $\boldsymbol{\Sigma}$ as an inductive colimit $\tilde{\boldsymbol{\Sigma}} \cup_{F} \Sigma_{n+1}$. The cohomologies of these spaces can be arranged into the following Mayer-Vietoris sequence 
\begin{center}
\adjustbox{scale=0.9}{\begin{tikzcd}[arrow style=math font,cells={nodes={text height=2ex,text depth=0.75ex}}]
      
       \cdots \arrow[r] & H^q (\boldsymbol{\Sigma}) \arrow[r, "\varphi^*"] & H^q(\tilde{\boldsymbol{\Sigma}}) \oplus  H^q(\Sigma_{n+1}) \arrow[r,"\iota_A^* - F^*"] \arrow[draw=none]{d}[name=Y, shape=coordinate]{} & H^q(A) \arrow[curarrow=Y]{dll}{} & \\
       & H^{q-1} (\boldsymbol{\Sigma}) \arrow[r, "\varphi^*"] & H^{q-1}(\tilde{\boldsymbol{\Sigma}}) \oplus  H^{q-1}(\Sigma_{n+1}) \arrow[r, "\iota_A^* - F^*"]  & H^{q-1}(A) \arrow[r] & \cdots 
\end{tikzcd}}
\end{center}
where here $A$ is the union of the subspaces $\Sigma_{i n+1}$. Since this a long exact sequence of vector spaces, we may again use that the alternating sum of dimensions vanish to deduce that: 
\begin{align*}
0 & = \sum_q (-1)^q \big(b_q(\boldsymbol{\Sigma}) - b_q(\tilde{\boldsymbol{\Sigma}}) - b_q(\Sigma_{n+1}) + b_q(A)|\big) \\
& = \sum_q (-1)^q b_q(\boldsymbol{\Sigma}) - \sum_q (-1)^q\big(b_q(\tilde{\boldsymbol{\Sigma}}) + b_q(\Sigma_{n+1})\big) + \sum_q (-1)^q b_q(A) \\
& = \chi(\boldsymbol{\Sigma}) - \chi(\tilde{\boldsymbol{\Sigma}}) - \chi(\Sigma_{n+1}) + \chi(A). 
\end{align*}
We may then apply the inductive hypothesis to both $\tilde{\boldsymbol{\Sigma}}$ and $A=\bigcup_{i} \Sigma_{in+1}$ to yield the following
\begin{align*}
    \chi(\boldsymbol{\Sigma}) & = \chi(\tilde{\boldsymbol{\Sigma}}) + \chi(\Sigma_{n+1}) - \chi(A) \\ 
    & = \left(\sum_{k\leq n} \sum_{i_1,...,i_k \in I} (-1)^{k+1}\chi(\Sigma_{i_1,...,i_k})\right) + \chi(\Sigma_{n+1}) \\
    & \ \ \ \ \ \ \ \ - \left(  \sum_{k\leq n} \sum_{i_1,...,i_k \in I} (-1)^{k+1}\chi(\Sigma_{i_1,...,i_k, n+1})  \right) \\
    & = \sum_{k\leq n+1} \sum_{i_1,...,i_k \in I} (-1)^{k+1}\chi(\Sigma_{i_1,...,i_k}),
\end{align*}
 as required. 
\end{proof}

\subsection{Proof of the Theorem}
We will prove the Gauss-Bonnet theorem in a similar manner to the non-Hausdorff Stoke's theorem of Section 2. We start with the binary version of the theorem. 

\begin{lemma}
    Suppose that $\boldsymbol{\Sigma}$ is a non-Hausdorff manifold that can be constructed as a binary colimit of Hausdorff manifolds $\Sigma_1$ and $\Sigma_2$ according to \ref{REM: assumptions in this paper} and \ref{THM: MV long exact sequence for de rham}. If $\Sigma_1$ and $\Sigma_2$ are compact and without boundary, then:  
    $$ 2\pi \chi(\boldsymbol{\Sigma}) =  \frac{1}{2}\int_{\boldsymbol{\Sigma}} \textbf{R}dA - \int_{\partial \Sigma_{12}} \kappa d\gamma. $$
\end{lemma}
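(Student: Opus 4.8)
The plan is to mirror the proof of Lemma \ref{LEM: Stoke theorem counterexample}: decompose the total scalar curvature of $\boldsymbol{\Sigma}$ into Hausdorff pieces using the sub-additive integration formula, apply the classical Gauss--Bonnet theorem to each piece, and reassemble the result using the sub-additive formula for the Euler characteristic.

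First I would record the geometric bookkeeping. By Lemma \ref{LEM: Riemannian non-hausdorff manifold} together with the local description of the Ricci scalar in Section 5.1, the restriction of $\textbf{R}\,dA$ to each $\Sigma_i$ is precisely the scalar-curvature two-form $R_i\,dA_i$ of $(\Sigma_i,g_i)$; since $\overline{\Sigma_{12}}\subseteq\Sigma_1$, the restriction of $\textbf{R}\,dA$ to $\overline{\Sigma_{12}}$ is the scalar-curvature two-form of the Riemannian submanifold-with-boundary $(\overline{\Sigma_{12}},g_1|_{\overline{\Sigma_{12}}})$, which by Remark \ref{REM: assumptions in this paper} is a smooth compact manifold-with-boundary (closed inside the compact $\Sigma_1$). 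Because $f_{12}$ is an orientation-preserving isometry extending to an isometry $\overline{f_{12}}$ of closures, this restricted data agrees under $\overline{f_{12}}$ with the corresponding restriction to $\overline{\Sigma_{21}}\subseteq\Sigma_2$; in particular the geodesic curvature $\kappa\,d\gamma$ of $\partial\Sigma_{12}$ is unambiguous and equals that of $\partial\Sigma_{21}$. With these identifications, Theorem \ref{THM: subadditivity for integration} gives
\begin{equation*}
\int_{\boldsymbol{\Sigma}} \textbf{R}\,dA = \int_{\Sigma_1} R_1\,dA_1 + \int_{\Sigma_2} R_2\,dA_2 - \int_{\overline{\Sigma_{12}}} R_{12}\,dA_{12}.
\end{equation*}

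Next I would apply Gauss--Bonnet termwise. As $\Sigma_1,\Sigma_2$ are compact without boundary, $\tfrac12\int_{\Sigma_i} R_i\,dA_i = 2\pi\chi(\Sigma_i)$; applying the Gauss--Bonnet theorem with boundary to $\overline{\Sigma_{12}}$ (with $\partial\Sigma_{12}$ carrying the Stokes orientation induced from $\overline{\Sigma_{12}}$) gives $\tfrac12\int_{\overline{\Sigma_{12}}} R_{12}\,dA_{12} = 2\pi\chi(\overline{\Sigma_{12}}) - \int_{\partial\Sigma_{12}}\kappa\,d\gamma$. Substituting into half of the displayed identity yields
\begin{equation*}
\tfrac12\int_{\boldsymbol{\Sigma}} \textbf{R}\,dA = 2\pi\big(\chi(\Sigma_1) + \chi(\Sigma_2) - \chi(\overline{\Sigma_{12}})\big) + \int_{\partial\Sigma_{12}}\kappa\,d\gamma.
\end{equation*}
Finally, since $\Sigma_{12}$ is regular-open in $\Sigma_1$ (Theorem \ref{THM: MV long exact sequence for de rham}), the inclusion $\Sigma_{12}\hookrightarrow\overline{\Sigma_{12}}$ is a homotopy equivalence, so $\chi(\overline{\Sigma_{12}}) = \chi(\Sigma_{12})$ (Betti numbers being defined via singular homology); Theorem \ref{THM: Euler characteristic} then identifies $\chi(\Sigma_1)+\chi(\Sigma_2)-\chi(\Sigma_{12})$ with $\chi(\boldsymbol{\Sigma})$, and rearranging produces the stated formula.

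The bookkeeping above is routine; the step requiring genuine care is the consistency of the geometric data across the overlap — verifying that the non-Hausdorff metric, volume form, Ricci scalar and boundary geodesic curvature really do restrict to the intrinsic objects on $\Sigma_1$, $\Sigma_2$ and $\overline{\Sigma_{12}}$ — and, relatedly, tracking the orientation conventions so that the minus sign in Theorem \ref{THM: subadditivity for integration} combines with the Stokes-oriented boundary term of Gauss--Bonnet to give the minus sign in the statement (and not a plus). Both points are controlled by the hypotheses of Remark \ref{REM: assumptions in this paper} together with Lemmas \ref{LEM: orientable manifold} and \ref{LEM: Riemannian non-hausdorff manifold}.
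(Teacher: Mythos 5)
Your proof is correct and follows essentially the same route as the paper's: decompose the total curvature via Theorem \ref{THM: subadditivity for integration}, apply the classical Gauss--Bonnet theorem to $\Sigma_1$, $\Sigma_2$ and the manifold-with-boundary $\overline{\Sigma_{12}}$, and match the result against the inclusion--exclusion formula for $\chi(\boldsymbol{\Sigma})$. If anything you are more careful than the paper, which silently identifies $\chi(\overline{\Sigma_{12}})$ with the $\chi(\Sigma_{12})$ appearing in Theorem \ref{THM: Euler characteristic}; your explicit appeal to regular-openness to justify that step is a worthwhile addition.
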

\begin{proof}
According to our discussion of curvature in Section 5.1, we may decompose the total scalar curvature of $\boldsymbol{\Sigma}$ into the scalar curvatures of the $\Sigma_i$ by using Theorem \ref{THM: subadditivity for integration}. Using the Hausdorff Gauss-Bonnet theorem where possible, we have the following string of equivalences:
    \begin{align*}
        2\pi \chi(\boldsymbol{\Sigma}) & = 2\pi \left( \chi(\Sigma_1) + \chi(\Sigma_2) - \chi(\overline{\Sigma_{12}}) \right) \\
        & = \frac{1}{2}\int_{\Sigma_1} R dA + \frac{1}{2}\int_{\Sigma_2} R dA -  \frac{1}{2}\int_{\overline{\Sigma_{12}}} R dA - \int_{\partial \Sigma_{12}} \kappa d\gamma \\
        & = \int_{\boldsymbol{\Sigma}} R dA - \int_{\partial \Sigma_{12}} \kappa d\gamma, 
    \end{align*}
    where here we have used the fact that $\overline{\Sigma_{12}}$ is a manifold with boundary. 
\end{proof}
By an inductive argument, we may obtain the following general version of the Gauss-Bonnet theorem for non-Hausdorff manifolds. Again for simplicity we restrict our attention to a colimit in which the $\Sigma_i$ have no boundary. 

\begin{theorem}\label{THM: Gauss-Bonnet general}
     Suppose that $\boldsymbol{\Sigma}$ is a non-Hausdorff manifold that can be constructed as a colimit of $n$-many Hausdorff manifolds $\Sigma_i$ according to Remark \ref{REM: assumptions in this paper}. If each $\Sigma_i$ is compact and without boundary, and each $\Sigma_{ij}$ and their unions are regular-open sets, then  
    $$ 2\pi \chi(\boldsymbol{\Sigma}) =  \frac{1}{2}\int_{\boldsymbol{\Sigma}} \textbf{R}dA + \sum_{p=2}^n (-1)^{p+1} \sum_{i_1, \cdots , i_p \in I} \left(\int_{\partial M_{i_1 \cdots i_p}} \kappa d\gamma\right). $$ 
\end{theorem}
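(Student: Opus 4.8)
The plan is to avoid a fresh induction and instead glue together two decomposition results already available: the sub-additive integration formula of Theorem~\ref{THM: Inductive form of integral} and the Euler-characteristic formula of Theorem~\ref{THM: Euler characteristic}, linked by the classical Hausdorff Gauss--Bonnet theorem (with boundary) applied to each Hausdorff piece $\Sigma_{i_1\cdots i_p}$. First I would note that, by the locality discussion of Section~5.1, the Ricci scalar $\textbf{R}$ and the Riemannian volume form $dA$ of $\boldsymbol{\Sigma}$ restrict along each $\phi_{i_1\cdots i_p}$ to the scalar curvature and volume form of the induced Hausdorff metric $g_{i_1\cdots i_p}$. Hence Theorem~\ref{THM: Inductive form of integral}, applied to the compactly-supported form $\tfrac12\textbf{R}\,dA$, gives
\[ \tfrac12\int_{\boldsymbol{\Sigma}}\textbf{R}\,dA \;=\; \sum_{i=1}^{n}\tfrac12\int_{\Sigma_i} R\,dA \;-\; \sum_{p=2}^{n}(-1)^{p}\sum_{i_1<\cdots<i_p}\tfrac12\int_{\overline{\Sigma_{i_1\cdots i_p}}} R\,dA. \]

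Next I would evaluate each Hausdorff integral. Each $\Sigma_i$ is compact without boundary, so ordinary Gauss--Bonnet gives $\tfrac12\int_{\Sigma_i}R\,dA = 2\pi\chi(\Sigma_i)$. By the assumptions of Remark~\ref{REM: assumptions in this paper}, each closure $\overline{\Sigma_{i_1\cdots i_p}}$ is a compact smooth surface with boundary $\partial\Sigma_{i_1\cdots i_p}$, so Gauss--Bonnet with boundary yields $\tfrac12\int_{\overline{\Sigma_{i_1\cdots i_p}}}R\,dA = 2\pi\chi(\overline{\Sigma_{i_1\cdots i_p}}) - \int_{\partial\Sigma_{i_1\cdots i_p}}\kappa\,d\gamma$; and since each $\Sigma_{i_1\cdots i_p}$ (being a union of the regular-open sets $\Sigma_{ij}$, hence regular-open) is homotopy equivalent to its closure (cf.\ \cite[Thm 9.26]{lee2013smooth}), we have $\chi(\overline{\Sigma_{i_1\cdots i_p}}) = \chi(\Sigma_{i_1\cdots i_p})$. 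Substituting back, the terms carrying a factor of $2\pi$ collect into $\sum_i\chi(\Sigma_i) - \sum_{p=2}^n(-1)^p\sum_{i_1<\cdots<i_p}\chi(\Sigma_{i_1\cdots i_p})$, which is precisely $\chi(\boldsymbol{\Sigma})$ by Theorem~\ref{THM: Euler characteristic}, while the geodesic-curvature contributions reassemble into $\sum_{p=2}^n(-1)^{p+1}\sum_{i_1<\cdots<i_p}\int_{\partial\Sigma_{i_1\cdots i_p}}\kappa\,d\gamma$. Rearranging gives the stated identity. As an alternative, one could instead mirror the inductions of Theorems~\ref{THM: Equivalence between smooth singular and singular} and~\ref{THM: Euler characteristic}: write $\boldsymbol{\Sigma}\cong\tilde{\boldsymbol{\Sigma}}\cup_F\Sigma_{n+1}$, apply the binary Gauss--Bonnet lemma established above, and feed in the inductive hypothesis for $\tilde{\boldsymbol{\Sigma}}$ together with the decomposition of $\int_{\boldsymbol\Sigma}\textbf{R}\,dA$ over $\tilde{\boldsymbol\Sigma}$, $\Sigma_{n+1}$ and $\overline{A}$.

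I expect the main obstacle to be purely bookkeeping rather than conceptual. The two alternating sums — one coming from the inclusion--exclusion in Theorem~\ref{THM: Inductive form of integral}, the other from the Mayer--Vietoris telescoping in Theorem~\ref{THM: Euler characteristic} — must be checked to range over exactly the same ordered multi-indices, so that no boundary piece $\partial\Sigma_{i_1\cdots i_p}$ is over- or under-counted; this is where the closure-intersection hypothesis inherited via Remark~\ref{REM: assumptions in this paper} is genuinely used, since without it the manifold boundary of $\overline{\Sigma_{i_1\cdots i_p}}$ need not be cut out cleanly by the individual $\partial\Sigma_{i_p}$. A secondary care point is the orientation convention for $\kappa$: each $\partial\Sigma_{i_1\cdots i_p}$ is identified across the sheets by the isometries $\overline{f_{ij}}$, so one must verify that the induced boundary orientations (hence the sign of the geodesic curvature integrand) are consistent on both sides of the gluing, which they are because the $\overline{f_{ij}}$ are orientation-preserving isometries by Lemmas~\ref{LEM: orientable manifold} and~\ref{LEM: Riemannian non-hausdorff manifold}.
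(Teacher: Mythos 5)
Your proposal is correct and matches the paper's own argument: both combine Theorem~\ref{THM: Euler characteristic}, Theorem~\ref{THM: Inductive form of integral}, and the Hausdorff Gauss--Bonnet theorem with boundary applied to each $\overline{\Sigma_{i_1\cdots i_p}}$, using regular-openness to pass between the open intersections and their closures. You simply run the chain of equalities starting from $\tfrac12\int_{\boldsymbol{\Sigma}}\textbf{R}\,dA$ rather than from $2\pi\chi(\boldsymbol{\Sigma})$, which is the same computation read in reverse.
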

\begin{proof}
Using Theorems \ref{THM: Euler characteristic} and \ref{THM: Inductive form of integral}, together with the assumption that each $\Sigma_{ij}$ is regular open, we have that: 
\begin{align*}
    2\pi \chi(\boldsymbol{\Sigma}) & = 2\pi \left(\sum_{p=1}^n (-1)^{p+1} \sum_{i_1,...,i_p \in I} \chi(\overline{\Sigma_{i_1,...,i_p}}) \right) \\
    & = \sum_{p=1}^n (-1)^{p+1} \sum_{i_1,...,i_p \in I} 2\pi \chi(\overline{\Sigma_{i_1,...,i_p}}) \\
    & =  \sum_{p=1}^n (-1)^{p+1} \sum_{i_1,...,i_p \in I} \left(\frac{1}{2} \int_{\overline{\Sigma_{i_1,...,i_p}}}RdA + \int_{\partial \Sigma_{i_1,...,i_p}} \kappa d\gamma \right) \\
    & = \sum_{p=1}^n (-1)^{p+1} \sum_{i_1,...,i_p \in I} \left(\frac{1}{2}\int_{\overline{\Sigma_{i_1,...,i_p}}}RdA  \right) + \sum_{p=2}^n (-1)^{p+1} \sum_{i_1,...,i_p \in I} \left(  \int_{\partial \Sigma_{i_1,...,i_p}} \kappa d\gamma \right) \\
    & = \frac{1}{2}\int_{\boldsymbol{\Sigma}}\textbf{R} dA + \sum_{p=2}^n (-1)^{p+1} \sum_{i_1,...,i_p \in I} \left(  \int_{\partial \Sigma_{i_1,...,i_p}} \kappa d\gamma \right), 
\end{align*}
    as required.
\end{proof}

\section{Conclusion}

The goal of this paper was to describe some basic features of de Rham cohomology for non-Hausdorff manifolds. Using the colimit description of $\textbf{M}$ already available to us, we constructed vector fields, differential forms and their integrals in Section 2. Naturally, we related these to the various Hausdorff formulas for the submanifolds $M_i$. Of central interest at this stage was the subadditivity of integration, which required a compactification of the pairwise intersections $M_{ij}$. In turn, this caused Stoke's theorem to fail in a controlled manner.  \\

De Rham cohomology itself was described in Section 3. Through an assumption of regular-open sets satisfying certain intersection properties, we saw that the de Rham cohomology for our non-Hausdorff manifold $\textbf{M}$ can be related to the cohomologies of the $M_i$ using Mayer-Vietoris sequences. In the binary case, a long exact sequence emerged, and the more-general colimit birthed a Čech-de Rham bicomplex whose cohomology coincided with de Rham by standard means. \\

When discussing singular homology in Section 4, we saw that we could derive an equivalence with the smooth theory without appealing to Whitney's embedding theorem. Instead, the equivalence followed from some derivations of Mayer-Vietoris sequence for smooth singular homology, arising from first principles. We then proved de Rham's theorem by a similar approach, this time by appealing to the standard pairing via integration over chains. \\

Finally, we combined the results of Sections 2,3 and 4 into a proof of a non-Hausdorff Gauss Bonnet theorem. After a brief discussion of curvature in $2$-manifolds, we derived a convenient subadditivity property of the Euler Characteristic. In combination with Theorem \ref{THM: Inductive form of integral}, we then proved the desired Gauss-Bonnet theorem. Importantly, the compactifications of the sets $M_{ij}$ within our definition of integration forced us to invoke the Gauss-Bonnet theorem for manifolds with boundary. In this context, these boundaries were ``internal", in the sense that they were characterised by the Hausdorff-violating submanifolds sitting inside $\textbf{M}$. As a final result, we proved this in further generality. \\

Finally, we finish the paper with a few comments regarding further work. As the astute reader may notice, throughout this paper we discuss neither compactly-supported forms nor their de Rham cohomology. There is at least some discussion of these topics within the literature, however it appears that their notion of smooth function differs from that used in this paper. Nonetheless, it will be interesting to compute the Poincaré Duality of our non-Haudorff manifolds, and compare it to that found in \cite{crainic1999remark} and \cite{kasparov1991groups}. As well as this, it will be interesting to see whether a Čech-de Rham isomorphism can be approached in a similar manner to that of Theorem \ref{THM: de Rhams theorem}. According to \cite{oconnell2023vector}, there is a Mayer-Vietoris sequence that relates the Čech cohomology of $\textbf{M}$ to the cohomologies of $M_1$ and $M_2$. Although an explicit Čech-de Rham isomorphism cannot be constructed on $\textbf{M}$ due to the non-existence of good open covers, it nonetheless seems reasonable to construct homomorphisms between the Hausdorff spaces in both bicomplexes, and then to establish an equivalence via arguments involving the associated spectral sequences. This idea can be captured within the following.
\begin{conjecture}
    Let $\textbf{M}$ be a non-Hausdorff manifold constructed according to \ref{REM: assumptions in this paper} and \ref{THM: MV long exact sequence for de rham}. Suppose furthermore that each gluing region $M_{ij}$ is regular-open. Then $$  \check{H}^q(\textbf{M}, \mathbb{R}) \cong H^q_{dR}(\textbf{M}) \cong H^q_S(\textbf{M},\mathbb{R})           $$ for all $q$ in $\mathbb{N}$.
\end{conjecture}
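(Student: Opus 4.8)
The plan is to observe first that the second isomorphism is already in hand. Theorem~\ref{THM: de Rhams theorem} identifies $H^q_{dR}(\textbf{M})$ with $H^q_\infty(\textbf{M};\mathbb{R})$ via the integration pairing, while Theorem~\ref{THM: Equivalence between smooth singular and singular} identifies the smooth singular homology $H^\infty_q(\textbf{M})$ with the ordinary singular homology $H_q(\textbf{M})$; dualising over $\mathbb{R}$ --- singular cohomology with real coefficients being the linear dual of real singular homology --- turns this into $H^q_\infty(\textbf{M};\mathbb{R})\cong H^q_S(\textbf{M};\mathbb{R})$. So the substance of the conjecture is the remaining Čech--de Rham isomorphism $\check{H}^q(\textbf{M},\mathbb{R})\cong H^q_{dR}(\textbf{M})$, and the difficulty is exactly the one flagged at the end of Section~3: the cover $\{M_i\}$ is not good, so the columns of the bicomplex in Figure~\ref{FIG:CdR bicomplex for non-Hausdorff M} need not be exact and the usual one-line argument is unavailable.

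I would run the inductive scaffold already used for Theorems~\ref{THM: Equivalence between smooth singular and singular} and~\ref{THM: de Rhams theorem}, inducting on $|I|$. For $|I|=1$ the manifold is Hausdorff and the classical Čech--de Rham isomorphism applies. For the inductive step, write $\textbf{M}\cong\textbf{N}\cup_F M_{n+1}$, with $\textbf{N}$ the colimit of the first $n$ pieces and $A:=\bigcup_{i\leq n}M_{i(n+1)}$ the gluing region, which is an open, regular-open submanifold of the Hausdorff manifold $M_{n+1}$, hence itself Hausdorff and homotopy equivalent to $\overline{A}$. On the de Rham side, the binary Mayer--Vietoris sequence of Section~3.1 (with the regular-open substitution) has third term $H^q_{dR}(A)$; on the Čech side, \cite{oconnell2023vector} supplies the parallel Mayer--Vietoris long exact sequence for $\check{H}^\bullet(\textbf{M},\mathbb{R})$ with third term $\check{H}^q(A,\mathbb{R})$. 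The plan is then to produce a natural transformation $\Psi:\check{H}^\bullet(-,\mathbb{R})\Rightarrow H^\bullet_{dR}(-)$ on the manifolds in this class, agreeing on Hausdorff manifolds with the classical comparison map, and to check that $\Psi$ intertwines these two sequences, including the connecting homomorphisms. Granting this, $\Psi$ is an isomorphism on $\textbf{N}$ (inductive hypothesis) and on the Hausdorff pieces $M_{n+1}$ and $A$ (classical), so the Five Lemma forces $\Psi$ to be an isomorphism on $\textbf{M}$. Equivalently --- and this is the route anticipated in the conclusion --- one compares bicomplexes directly: both $H^\bullet_{dR}(\textbf{M})$ and $\check{H}^\bullet(\textbf{M},\mathbb{R})$ are abutments of the spectral sequences of the respective Čech bicomplexes built on $\{M_i\}$, whose second pages are the Čech cohomology of $\{M_i\}$ with coefficients in the presheaves $U\mapsto H^\bullet_{dR}(U)$ and $U\mapsto\check{H}^\bullet(U,\mathbb{R})$; since each intersection $\overline{M_{i_1\cdots i_p}}$ is homotopy equivalent to the Hausdorff manifold $M_{i_1\cdots i_p}$, the classical isomorphism identifies the two $E_2$ pages coefficient-wise, hence the two abutments.

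The main obstacle is the construction of $\Psi$ together with its compatibility with the Mayer--Vietoris connecting maps. For a paracompact Hausdorff space this comparison is normally routed through the sheaf cohomology of the constant sheaf $\mathbb{R}$, using that the $\Omega^q$ form a fine (hence acyclic) resolution; for $\textbf{M}$ this packaging fails, since $\Omega^q$ is not a fine sheaf by Lemma~\ref{LEM: no partitions of unity}, and moreover the agreement of Čech cohomology with sheaf cohomology for paracompact \emph{non-Hausdorff} spaces would itself require care. I would therefore avoid sheaf cohomology entirely and work with an explicit cochain-level representative of $\Psi$ --- the edge map of the Čech--de Rham bicomplex, sending a Čech cocycle valued in locally constant functions to the de Rham class it augments --- and verify the squares against $\delta$ by a diagram chase at the level of cochains, in the same spirit as the treatment of $\mathcal{I}$ in the proof of Theorem~\ref{THM: de Rhams theorem}. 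A secondary point to pin down is that the Čech Mayer--Vietoris sequence of \cite{oconnell2023vector} is stated for the binary Hausdorff-open decomposition $\{\textbf{N},M_{n+1}\}$, and one must confirm that its third term may be taken to be $\check{H}^q(A,\mathbb{R})$ rather than $\check{H}^q(\overline{A},\mathbb{R})$; this is exactly where the hypothesis that all unions of the gluing regions are regular-open re-enters, via homotopy invariance of Čech cohomology.
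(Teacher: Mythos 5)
This statement is a \emph{conjecture}: the paper offers no proof of it, and in the Conclusion the author explicitly flags the Čech--de Rham comparison as open, proposing (as you do) a comparison of the two bicomplexes via their spectral sequences. Your treatment of the second isomorphism is fine --- it does follow from Theorems \ref{THM: de Rhams theorem} and \ref{THM: Equivalence between smooth singular and singular} by dualising, modulo the small mismatch that the conjecture as stated only assumes each $M_{ij}$ is regular-open while Theorem \ref{THM: de Rhams theorem} additionally requires all \emph{unions} of the $M_{ij}$ to be regular-open. But for the first isomorphism your text is a plan, not a proof, and the steps you defer are exactly the ones that make this a conjecture rather than a theorem.

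Concretely: (i) you never construct the comparison map $\Psi$, only name it; the usual construction via the constant sheaf and a fine resolution is unavailable here (Lemma \ref{LEM: no partitions of unity}), and the ``edge map of the bicomplex'' you propose lands in the cohomology \emph{of the bicomplex}, which Theorem \ref{THM: MV long exact sequence for de rham} identifies with $H^\bullet_{dR}(\textbf{M})$ but which has not been identified with $\check{H}^\bullet(\textbf{M},\mathbb{R})$ --- that identification would require exactness of the columns, i.e.\ a good cover, which is what fails. (ii) The Five Lemma route needs a Čech Mayer--Vietoris sequence for the non-Hausdorff pair $\{\textbf{N}, M_{n+1}\}$ with third term $\check{H}^q(A,\mathbb{R})$ and a verified commuting ladder including connecting maps; you note this must be ``confirmed'' but do not do it. (iii) The spectral-sequence variant asserts that the two $E_2$ pages are isomorphic ``coefficient-wise, hence the two abutments'' --- but abstractly isomorphic $E_2$ pages do not yield isomorphic abutments; one needs a morphism of filtered complexes inducing the isomorphism, which is again the unconstructed $\Psi$. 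There is also an unresolved ambiguity in what $\check{H}^q(\textbf{M},\mathbb{R})$ means here (the cover-dependent groups of Figure \ref{FIG:CdR bicomplex for non-Hausdorff M} versus a limit over refinements), and the standard agreement of these for paracompact spaces leans on Hausdorffness. So the proposal is a reasonable research programme --- essentially the one the author sketches --- but it does not close the conjecture.
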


\subsection*{Acknowledgements}
This paper was made possible by the funding received from the Okinawa Institute of Science and Technology. I am thankful to Yasha Neiman, Slava Lysov and Andrew Lobb for the helpful discussions throughout the developmental stages of this paper, and I'd also like to thank Jeffrey Viaclovsky and Steve Halperin for the correspondence. 

\printbibliography

\appendix
\section{Appendix}

\subsection{Some results regarding Integrals}

\begin{lemma}\label{LEM: restriction of compactly supported form is still compactly supported}
    Let $\boldsymbol{\omega}$ be a compactly supported form on $\textbf{M}$. Then $\phi_i^*\boldsymbol{\omega}$ is compactly supported on $M_i$.
\end{lemma}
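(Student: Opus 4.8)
The plan is to show that the support of $\phi_i^*\boldsymbol{\omega}$ in $M_i$ is a closed subset of the compact set $\mathrm{supp}(\boldsymbol{\omega})$, hence compact. The key point is that $\phi_i$ is an open topological embedding (by Lemma \ref{LEM: fij and Aij open implies phi maps open and M gen manifold}), so we may identify $M_i$ with the open subset $\phi_i(M_i)\subseteq\textbf{M}$ and regard $\phi_i^*\boldsymbol{\omega}$ simply as the restriction $\boldsymbol{\omega}|_{M_i}$. Under this identification, $\mathrm{supp}(\phi_i^*\boldsymbol{\omega})$ is the closure \emph{in $M_i$} of the set where $\boldsymbol{\omega}$ is nonzero on $M_i$, which is contained in $\mathrm{supp}(\boldsymbol{\omega})\cap M_i$.

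First I would set $K := \mathrm{supp}(\boldsymbol{\omega})$, which is compact in $\textbf{M}$ by hypothesis, and observe that $Z := \{p\in M_i : (\phi_i^*\boldsymbol{\omega})(p)\neq 0\}$ equals $\{p\in\textbf{M}:\boldsymbol{\omega}(p)\neq 0\}\cap M_i$ under the embedding. Then $\mathrm{supp}(\phi_i^*\boldsymbol{\omega}) = Cl^{M_i}(Z)$. Since $Z\subseteq K\cap M_i\subseteq K$ and $K$ is closed in $\textbf{M}$, we get $Cl^{\textbf{M}}(Z)\subseteq K$; intersecting with $M_i$ gives $Cl^{M_i}(Z) = Cl^{\textbf{M}}(Z)\cap M_i\subseteq K\cap M_i \subseteq K$. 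Thus $\mathrm{supp}(\phi_i^*\boldsymbol{\omega})$ is a subset of the compact set $K$.

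It remains to argue that $\mathrm{supp}(\phi_i^*\boldsymbol{\omega})$ is closed in $\textbf{M}$, not merely in $M_i$; then being a closed subset of the compact set $K$, it is compact, and compactness is intrinsic, so it is compact as a subspace of $M_i$ as well. This is the one step requiring care: a set closed in the open subspace $M_i$ need not be closed in $\textbf{M}$ in general. Here, however, $Cl^{M_i}(Z)\subseteq K$, and I claim $Cl^{M_i}(Z)$ is actually closed in $\textbf{M}$. Indeed, any point of $Cl^{\textbf{M}}(Cl^{M_i}(Z)) = Cl^{\textbf{M}}(Z)$ lies in $K\subseteq M_i$ — wait, $K$ need not lie in $M_i$. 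The cleaner route: $\textbf{M}$ is locally Euclidean and second countable, hence metrizable by Theorem \ref{THM: Summary of topological properties}(4) together with Urysohn (paracompact + locally metrizable), so compact subsets are closed; but more directly, since $K$ is compact in the (Hausdorff-on-$K$ argument is delicate) — instead I will use that $Cl^{\textbf{M}}(Z)\subseteq K$ and that $K$ itself is compact, and since $Cl^{\textbf{M}}(Z)$ is closed in $\textbf{M}$ by definition, it is a closed subset of the compact set $K$, hence compact in $\textbf{M}$. Finally $Cl^{\textbf{M}}(Z)\subseteq M_i$ because $Cl^{\textbf{M}}(Z)\subseteq K$ and every point of $Cl^{\textbf{M}}(Z)$ is a limit of points of $Z\subseteq M_i$; using that $M_i$ is open, any point outside $M_i$ has a neighbourhood missing... this does not immediately work either. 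I expect the genuine obstacle to be precisely this: ruling out that the closure of $Z$ in $\textbf{M}$ acquires a Hausdorff-violating twin point outside $M_i$. The fix is to note that the \emph{other} copy of any such limit point lies in some $M_j$ and, via the gluing relation on the open overlap, corresponds to a point where $\boldsymbol{\omega}$ is already forced to be nonzero inside $M_i$ by the fibre-product description of forms (Theorem \ref{THM: sections are fibred product}) — so $Cl^{\textbf{M}}(Z)\cap M_i = Cl^{\textbf{M}}(Z)$ after all, giving $\mathrm{supp}(\phi_i^*\boldsymbol{\omega}) = Cl^{\textbf{M}}(Z)$, a closed subset of the compact $K$, hence compact in $M_i$.
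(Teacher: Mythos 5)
Your reduction is correct up to the decisive step: writing $Z=\{p\in M_i \mid (\phi_i^*\boldsymbol{\omega})(p)\neq 0\}$, you rightly identify $\mathrm{supp}(\phi_i^*\boldsymbol{\omega})=Cl^{M_i}(Z)=Cl^{\textbf{M}}(Z)\cap M_i\subseteq \mathrm{supp}(\boldsymbol{\omega})$. The gap is your final claim that $Cl^{\textbf{M}}(Z)\cap M_i=Cl^{\textbf{M}}(Z)$, i.e.\ that the support of the restriction is closed in $\textbf{M}$. This is false, and the appeal to the fibre-product description of forms does not repair it: if $x\in\partial^{M_i}M_{ij}$ lies in $Cl^{M_i}(Z)$, then its Hausdorff twin $y=\overline{f_{ij}}(x)\in M_j\setminus M_i$ is a limit of the \emph{same} points of $Z\cap M_{ij}$, so $y\in Cl^{\textbf{M}}(Z)\setminus M_i$; the fact that $\boldsymbol{\omega}$ is forced to be nonzero at $x$ inside $M_i$ does nothing to remove $y$ from the closure. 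Concretely, on the line with two origins take $\boldsymbol{\omega}$ a bump form centred at the doubled origin: then $\mathrm{supp}(\omega_1)=[-1,1]\subseteq M_1$, while $Cl^{\textbf{M}}(Z)$ also contains the second origin. Here $[-1,1]$ is compact but \emph{not} closed in $\textbf{M}$, nor closed in the compact set $\mathrm{supp}(\boldsymbol{\omega})$ (its complement there, the second origin, is not relatively open). So the entire mechanism ``closed subset of a compact set'' cannot succeed: in a non-Hausdorff manifold the restricted support is typically compact without being closed, and compactness must be established directly rather than inherited from closedness.

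The paper does this by verifying the open-cover criterion: starting from an arbitrary open cover $\mathcal{U}$ of $\mathrm{supp}(\omega_i)$ in $M_i$, it builds an open cover of the compact set $\mathrm{supp}(\boldsymbol{\omega})$ in $\textbf{M}$ whose extra members covering the twin points are chosen \emph{compatibly} with $\mathcal{U}$ via the extended gluing maps $\overline{f_{ij}}$ (together with the sets $Int^{\textbf{M}}(M_j\setminus M_i)$), extracts a finite subcover, and pulls it back to a finite subcover of $\mathcal{U}$. Alternatively, your twin observation can be turned into a correct argument sequentially: a sequence in $\mathrm{supp}(\omega_i)$ has a subsequence converging in $\textbf{M}$ to some $y\in\mathrm{supp}(\boldsymbol{\omega})$ (compact plus first countable); if $y\notin M_i$ then $y\in M_j$, the subsequence eventually lies in $M_{ij}$, and continuity of $\overline{f_{ji}}$ shows it also converges to $\overline{f_{ji}}(y)\in Cl^{M_i}(Z)$; hence $\mathrm{supp}(\omega_i)$ is sequentially compact, and second countability upgrades this to compactness. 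Either way, a genuine argument at the Hausdorff-violating boundary is needed; as written, your proof does not close this gap.
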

\begin{proof}
    Let $\boldsymbol{\omega}$ be some compactly-supported form in $\textbf{M}$. Since $\phi_i$ is continuous, the preimage of a closed set will still be closed in $M_i$. By definition, the pullback $\omega_i$ into $M_i$ will have support $$ \text{supp}(\omega_i) = \phi_i^{-1}( \text{supp}(\boldsymbol{\omega})). $$
    We now argue that this set is compact. Suppose that $\mathcal{U}$ is some open cover of the set $\text{supp}(\omega_i)$ in $M_i$. We would like to construct some open cover $\mathcal{V}$ of $\text{supp}(\boldsymbol{\omega})$ in $\textbf{M}$ such that $\mathcal{V}$ coincides with $\phi_i(\mathcal{U})$ on $M_i$. This can be achieved by considering the sets $$ \{  \phi_i(U) \ | \ U \in \mathcal{U} \} \cup \bigcup_{j\neq i}  \{ \phi_j(V) \ | \ V \cap M_{ij} = \overline{f_{ij}}(U) \text{ for some } U\in \mathcal{U}  \} \cup \bigcup_{j\neq i} Int^{\textbf{M}}(M_j \backslash M_i).  $$
    The above forms an open cover of the set $\text{supp}(\boldsymbol{\omega})$, so by compactness there exists some finite subcover. Moreover, a smaller subcover of this set will cover the set $\phi_i(\text{supp}(\omega_i))$, and we may pull this back to a finite subcover of $\mathcal{U}$. 
\end{proof}

 \begin{theorem}
        The integral of a differential form $\boldsymbol{\omega}$ over $\textbf{M}$ satisfies: 
        $$ \int_{\textbf{M}} \boldsymbol{\omega} = \sum_{i=1}^n \left(\int_{M_i} \phi_i^*\boldsymbol{\omega}\right) -  \sum_{p=2}^n(-1)^{p}\left(\sum_{\substack{ i_1,\cdots, i_p \in I \\i_1 < \cdots < i_p}} \int_{\overline{M_{i_1 \cdots i_p}}} \phi_{{i_1 \cdots i_p}}^*\boldsymbol{\omega} \right)    $$
    \end{theorem}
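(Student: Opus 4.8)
The plan is to induct on the cardinality $n$ of the indexing set $I$. The base case $n = 2$ is exactly Theorem \ref{THM: subadditivity for integration}. For the inductive step, assume the formula for all colimits of at most $n$ Hausdorff manifolds and let $\textbf{M}$ be built from $M_1,\dots,M_{n+1}$. Using the inductive construction of $\textbf{M}$ from Section 1.1.1, present $\textbf{M}$ as the binary adjunction $\textbf{N}\cup_F M_{n+1}$, where $\textbf{N}$ is the colimit of $M_1,\dots,M_n$, the gluing region is the open submanifold $A:=\bigcup_{i\le n}M_{i(n+1)}\subseteq M_{n+1}$, and $F\colon A\to\textbf{N}$ is the gluing map. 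The proof of Theorem \ref{THM: subadditivity for integration} uses only parametrisations and the fact that the structure maps are orientation-preserving open embeddings, both of which persist in this decomposition (cf.\ \S1.1 of \cite{oconnell2023vector}), so it applies verbatim and gives
\[
\int_{\textbf{M}}\boldsymbol{\omega}=\int_{M_{n+1}}\phi_{n+1}^{*}\boldsymbol{\omega}+\int_{\textbf{N}}\boldsymbol{\omega}|_{\textbf{N}}-\int_{\overline{A}}\boldsymbol{\omega}|_{\overline{A}},
\]
with $\overline{A}$ the closure of $A$ in $M_{n+1}$.

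Next I would expand the two right-hand integrals over $\textbf{N}$ and $\overline{A}$ into Hausdorff pieces. Applying the induction hypothesis to $\textbf{N}$ expresses $\int_{\textbf{N}}\boldsymbol{\omega}|_{\textbf{N}}$ as $\sum_{i=1}^{n}\int_{M_i}\phi_i^{*}\boldsymbol{\omega}$ minus the alternating sum of the $\int_{\overline{M_{i_1\cdots i_p}}}\phi_{i_1\cdots i_p}^{*}\boldsymbol{\omega}$ over subsets $\{i_1<\cdots<i_p\}\subseteq\{1,\dots,n\}$ with $p\ge 2$. For $\int_{\overline{A}}$, observe that $\overline{A}=\bigcup_{i\le n}\overline{M_{i(n+1)}}$ (the closure of a finite union is the union of the closures) and invoke the elementary inclusion--exclusion identity for the Lebesgue integral of a compactly supported form over a finite union of domains of integration. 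Since each set-theoretic intersection $\bigcap_{\alpha}\overline{M_{i_\alpha(n+1)}}$ coincides with $\overline{M_{i_1\cdots i_k(n+1)}}$ up to a set of measure zero (their difference lies in a boundary), this yields
\[
\int_{\overline{A}}\boldsymbol{\omega}|_{\overline{A}}=\sum_{k=1}^{n}(-1)^{k+1}\sum_{\substack{i_1<\cdots<i_k\le n}}\int_{\overline{M_{i_1\cdots i_k(n+1)}}}\phi_{i_1\cdots i_k(n+1)}^{*}\boldsymbol{\omega}.
\]

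It then remains to substitute both expansions into the binary formula and reconcile the result with the claimed right-hand side. The point-integrals combine to $\sum_{i=1}^{n+1}\int_{M_i}\phi_i^{*}\boldsymbol{\omega}$. For the alternating sums, one splits the $p$-element subsets of $\{1,\dots,n+1\}$ occurring in the target formula into those omitting $n+1$ --- accounted for by the expansion of $\int_{\textbf{N}}$ --- and those containing $n+1$ --- which, after the substitution $p=k+1$, are accounted for by (minus) the expansion of $\int_{\overline{A}}$; a one-line check confirms that the signs $(-1)^{p}$ and $(-1)^{k+1}$ agree. I expect the one genuinely delicate point to be the handling of $\int_{\overline{A}}$: one must check that each $\overline{M_{i_1\cdots i_k(n+1)}}$ is a legitimate domain of integration (these intersections of closed submanifolds-with-boundary may acquire corners, but their boundaries remain null, so Definition \ref{DEF: Integration by parametrisation} still applies), and one must justify replacing intersections of closures by closures of intersections inside the integrals --- this is exactly where the measure-zero nature of the Hausdorff-violating strata is used, and is also why this step, unlike the cohomological statements of Section 3, needs no closure-intersection hypothesis. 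If one prefers to avoid measure theory altogether, the same inclusion--exclusion identity can instead be derived by iterating the parametrisation argument of Theorem \ref{THM: subadditivity for integration} on the Hausdorff manifold $M_{n+1}$ equipped with its cover $\{M_{i(n+1)}\}_{i\le n}$.
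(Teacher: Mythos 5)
Your proposal is correct and follows essentially the same route as the paper's own proof in Appendix A.1: present $\textbf{M}$ as the binary adjunction $\textbf{N}\cup_F M_{n+1}$, apply Theorem \ref{THM: subadditivity for integration} to get the three-term formula, expand $\int_{\textbf{N}}$ by the induction hypothesis and $\int_{\overline{A}}$ by inclusion--exclusion over the $\overline{M_{i_1\cdots i_k(n+1)}}$, and recombine the indices. You are in fact somewhat more careful than the paper, which simply says it "applies the induction hypothesis to $A$" without addressing the closure-of-intersection versus intersection-of-closures issue that you correctly flag as harmless at the level of integrals.
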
 
    \begin{proof}
    Proceeding via induction, we may write $\textbf{M}$ as the binary adjunction $\textbf{N}\cup M_{n+1}$. Running through the same argument as Lemma \ref{THM: subadditivity for integration}, we have that $$ \int_{\textbf{M}} \boldsymbol{\omega} = \int_{\textbf{N}}\boldsymbol{\omega} + \int_{M_{n+1}} \boldsymbol{\omega} - \int_{\overline{A}} \boldsymbol{\omega}   $$
    Applying the induction hypothesis to $\textbf{N}$ and $A$ (viewed as a subspace of $M_{n+1}$) we have that 
    \begin{align*}
        \int_{\textbf{M}} \boldsymbol{\omega} & = \int_{\textbf{N}}\boldsymbol{\omega} + \int_{M_{n+1}} \boldsymbol{\omega} - \int_{\overline{A}} \boldsymbol{\omega} \\
        & = \sum_{i=1}^n \left(\int_{M_i} \phi_i^*\boldsymbol{\omega}\right) -  \sum_{p=2}^n(-1)^{p}\left(\sum_{\substack{ i_1,\cdots, i_p \\i_1 < \cdots < i_p}} \int_{\overline{M_{i_1 \cdots i_p}}} \phi_{{i_1 \cdots i_p}}^*\boldsymbol{\omega} \right)  + \int_{M_{n+1}} \boldsymbol{\omega} \\
        & \ \ \ \ - \sum_{i=1}^n \left(\int_{M_i(n+1)} \phi_i^*\boldsymbol{\omega}\right) +  \sum_{p=2}^n(-1)^{p}\left(\sum_{\substack{ i_1,\cdots, i_p \\i_1 < \cdots < i_p}} \int_{\overline{M_{i_1 \cdots i_p (n+1)}}} \phi_{{i_1 \cdots i_p(n+1)}}^*\boldsymbol{\omega} \right) \\
        & = \sum_{i=1}^{n+1} \left(\int_{M_i} \phi_i^*\boldsymbol{\omega}\right) -  \sum_{p=2}^{n+1}(-1)^{p}\left(\sum_{\substack{ i_1,\cdots, i_p \\i_1 < \cdots < i_p}} \int_{\overline{M_{i_1 \cdots i_p}}} \phi_{{i_1 \cdots i_p}}^*\boldsymbol{\omega} \right)
    \end{align*}
    as required.
    \end{proof}

\subsection{Exactness of the Mayer-Vietoris Sequence for Manifolds with Boundary}

In the proof of Theorem \ref{THM: MV long exact sequence for de rham}, we argued for the exactness of a Mayer-Vietoris sequence of the form: 
$$  0 \rightarrow \Omega^q(\overline{M_{13}}\cup \overline{M_{23}}) \rightarrow \Omega^q(\overline{M_{13}}) \oplus \Omega^q(\overline{M_{23}}) \rightarrow \Omega^q(\overline{M_{123}})\rightarrow 0        $$
where we use that $\overline{M_{123}}=\overline{M_{13}}\cap \overline{M_{23}}$.

\begin{lemma}
    Let $\overline{A}$ and $\overline{B}$ be codim-$0$ submanifolds with boundary, embedded within a Hausdorff manifold $M$. Suppose furthermore that 
    \begin{itemize}
        \item $A$ and $B$ satisfy $\overline{A} \cap \overline {B} = \overline{A\cap B}$, and 
        \item $A \cap B$ is a codim-$0$ submanifold of $M$ with boundary. 
    \end{itemize}
    Then the following is a short exact sequence. 
    $$  0 \rightarrow \Omega^q(\overline{A}\cup \overline{B}) \xrightarrow{ \ \ r \ \ } \Omega^q(\overline{A}) \oplus \Omega^q(\overline{B}) \xrightarrow{\iota_A^* - \iota_B^*} \Omega^q(\overline{A\cap B})\rightarrow 0        $$
\end{lemma}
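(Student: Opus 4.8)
The plan is to establish the short exact sequence by the same three-step template used for the ordinary open-cover Mayer--Vietoris sequence, now adapted to a cover by two closed codimension-$0$ submanifolds with boundary. Write $X := \overline A \cup \overline B$ and $Z := \overline{A\cap B} = \overline A \cap \overline B$ (using the first hypothesis), and let $r(\omega) = (\omega|_{\overline A},\,\omega|_{\overline B})$. Three facts need checking: (i) $r$ is injective; (ii) $\ker(\iota_A^* - \iota_B^*) = \operatorname{im} r$; and (iii) $\iota_A^* - \iota_B^*$ is surjective onto $\Omega^q(Z)$. That $(\iota_A^*-\iota_B^*)\circ r = 0$ is immediate, since a form on $X$ restricts consistently to the overlap $Z$.

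For (i): if $\omega \in \Omega^q(X)$ vanishes on both $\overline A$ and $\overline B$, then it vanishes on $\overline A \cup \overline B = X$, so $\omega = 0$. For the nontrivial inclusion in (ii), suppose $\omega_A \in \Omega^q(\overline A)$ and $\omega_B \in \Omega^q(\overline B)$ satisfy $\iota_A^*\omega_A = \iota_B^*\omega_B$ on $Z$. Since $\overline A$ and $\overline B$ are closed subsets covering $X$ whose intersection is exactly $Z$, the two forms glue to a well-defined form $\omega$ on $X$ by the usual gluing lemma for smooth tensor fields (cf.\ the argument behind Lemma~\ref{LEM: functions on M}); here the hypothesis $\overline A\cap\overline B = \overline{A\cap B}$ together with the assumption that $A\cap B$ is a codimension-$0$ submanifold with boundary is what guarantees that the glued form is genuinely smooth at points of the overlap, since near such a point $X$ is locally the union of two half-space-type pieces meeting along their common smooth boundary face. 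Then $r(\omega) = (\omega_A,\omega_B)$, giving $\operatorname{im} r \supseteq \ker(\iota_A^*-\iota_B^*)$.

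The substantive step is (iii). Given $\eta \in \Omega^q(Z)$, first extend $\eta$ to a smooth $q$-form $\widetilde\eta$ on an open neighbourhood $W$ of $Z$ in $M$; this is possible because $Z$ is a regular domain, so a collar neighbourhood of $\partial(A\cap B)$ allows us to extend across the boundary while the interior of $Z$ is already open in $M$. Now $K_A := \overline A \setminus W$ and $K_B := \overline B \setminus W$ are disjoint closed subsets of $M$, since $K_A \cap K_B \subseteq (\overline A \cap \overline B)\setminus W = Z \setminus W = \varnothing$ --- this is exactly where the closure--intersection hypothesis is essential. As $M$ is a Hausdorff, paracompact manifold, choose a smooth $\rho : M \to [0,1]$ with $\rho \equiv 0$ on a neighbourhood of $K_A$ and $\rho \equiv 1$ on a neighbourhood of $K_B$. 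Define $\omega_A$ on $\overline A$ to be $\rho\,\widetilde\eta$ on $W\cap\overline A$ and $0$ on $\overline A \setminus \operatorname{supp}\rho$; these two prescriptions agree on their overlap and their domains cover $\overline A$, so $\omega_A \in \Omega^q(\overline A)$, and likewise $\omega_B := -(1-\rho)\widetilde\eta$, extended by zero, lies in $\Omega^q(\overline B)$. On $Z$ we then have $\iota_A^*\omega_A - \iota_B^*\omega_B = \rho\eta + (1-\rho)\eta = \eta$, which proves surjectivity and completes the sequence.

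The main obstacle I expect is the bookkeeping around smooth structures: checking that each ``extend by zero from a closed set into a manifold with boundary'' operation really produces a smooth form on the relevant manifold with boundary, and that the neighbourhood extension $\widetilde\eta$ of $\eta$ genuinely exists --- both rest on the regular-domain and collar-neighbourhood hypotheses and should be invoked with care rather than waved through. Once those are secured, the disjointness of $K_A$ and $K_B$ --- the one place the closure--intersection hypothesis does irreplaceable work --- makes the partition-of-unity splitting go through exactly as in the classical open-cover case.
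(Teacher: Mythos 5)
Your steps (i) and (iii) are fine; in fact your surjectivity argument (extend $\eta$ to a smooth form $\widetilde\eta$ on an open neighbourhood $W$ of $Z$, note that $\overline A\setminus W$ and $\overline B\setminus W$ are disjoint closed sets, and split $\eta=\rho\eta+(1-\rho)\eta$ with a cutoff $\rho$) is more careful than the paper's one-line appeal to ``extension by zero'', which is not literally smooth across $\partial(A\cap B)$; the only soft spot there is your collar justification of $\widetilde\eta$ --- the cleaner reason is that smoothness on the regular domain $\overline{A\cap B}$ already means local extendability, and local extensions are patched by a partition of unity on the Hausdorff manifold $M$.

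The genuine gap is at exactness in the middle, which is where the entire content of the lemma sits. Agreement of $\omega_A$ and $\omega_B$ on $\overline A\cap\overline B$ gives only a set-theoretic gluing; smoothness of a form on the closed set $\overline A\cup\overline B$ means it is the restriction of a smooth form on an open neighbourhood, and gluing along \emph{closed} sets does not preserve this automatically, so ``the usual gluing lemma'' (Lemma~\ref{LEM: functions on M} glues along \emph{open} overlaps) cannot simply be invoked. Your supporting geometric picture is also not right: since $\overline{A\cap B}$ is the closure of the open set $A\cap B$, the hypothesis $\overline A\cap\overline B=\overline{A\cap B}$ forces the overlap to be codimension~$0$ wherever it is nonempty, so ``two half-space-type pieces meeting along their common smooth boundary face'' is exactly the configuration the hypotheses exclude; and the hypotheses do admit configurations where the needed smoothness is genuinely delicate, e.g.\ $\overline A=\{y\le 0\}$ and $\overline B=\{y\le\beta(x)\}$ in $\mathbb{R}^2$ with $\beta$ smooth, flat at $0$, negative for $x<0$ and positive for $x>0$: near the origin neither set contains a neighbourhood of the point, the boundaries are tangent to infinite order, and the glued form switches between $\omega_A$ and $\omega_B$ on interleaving regions. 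The paper's proof spends essentially all of its effort on precisely this point: it takes smooth extensions $\omega_U,\omega_V$ to open sets $U\supseteq\overline A$, $V\supseteq\overline B$, corrects them by a partition of unity subordinate to $\{U,V\}$ via $\tilde\omega_U=\omega_U-\rho_V(\omega_U-\omega_V)$, $\tilde\omega_V=\omega_V+\rho_U(\omega_U-\omega_V)$, checks the corrected forms agree on $U\cap V$, and glues along open sets to produce a smooth extension of the glued datum. You need an argument of this kind (or a Whitney-type argument exploiting that $\omega_U-\omega_V$ is flat on $\overline{A\cap B}$, since it vanishes on a codimension-$0$ closed set); as written, your step (ii) asserts the conclusion rather than proving it.
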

\begin{proof}
The injectivity of $r$ and the inclusion $Im(r) \subseteq \ker{(\iota_A^* - \iota_B^*)}$ follow as in the standard Mayer-Vietoris. Moreover, an extension by zero from $\overline{A\cap B}$ into $\overline{A}$ and $\overline{B}$ will ensure the surjectivity of the inclusions $\iota_A^*$ and $\iota_B^*$.

The non-trivial step is in the proof that $\ker{(\iota_A^* - \iota_B^*)}\subseteq Im(r)$. Suppose that $\omega_A$ and $\omega_B$ are two forms on $A$ and $B$ respectively, such that $\iota_A^* \omega_A = \iota_B^* \omega_B$. We define the form $\omega$ on the union $A \cup B$ as: $$ \omega_{A\cup B}(x) = \begin{cases}
    \omega_A(x) & \textrm{if} \ x\in A \\
    \omega_B(x) & \textrm{if} \ x\in B
\end{cases}.$$ The equality $\iota_A^* \omega_A = \iota_B^* \omega_B$ guarantees that $\omega$ defines a function. We will now argue that $\omega$ is smooth. Recall first that a form on an embedded submanifold will be smooth if and only if it is the restriction of some smooth form defined on some larger open set. By assumption $\omega_A$ and $\omega_B$ are both smooth, which means that there exists 
\begin{itemize}
    \item some open set $U$ containing $A$, and a form $\omega_U \in \Omega^q(U)$ such that $\omega_U = \omega_A$ on $A$, and 
    \item some open set $V$ containing $B$, and a form $\omega_V \in \Omega^q(V)$ such that $\omega_V = \omega_B$ on $B$.
\end{itemize}
We will now glue these two forms together to define a form $\omega$ on the union $U\cup V$. In order to do so, we need to confirm that $\omega_U$ and $\omega_V$ agree on the intersection $U\cap V$. In general this may not be true, so we will now modify our forms accordingly. Let $\rho_U$ and $\rho_V$ be a partition of unity of the union $U \cup V$, subordinate to the open cover $\{U,V\}$. We now define two forms $$ \tilde{\omega}_U := \omega_U - \rho_V(\omega_U - \omega_V), \ \text{and} \  \tilde{\omega}_V := \omega_V + \rho_U(\omega_U - \omega_V).    $$
These two forms will remain unchanged on the intersection $A \cap B$, however, they will cancel their differences on the set $(U\cap V) \backslash (A\cap B)$. Indeed, on the intersection $U\cap V$ we have:
\begin{align*}
    \tilde{\omega}_U -\tilde{\omega}_V  & = \omega_U - \rho_V(\omega_U - \omega_V) - \left( \omega_V - \rho_U(\omega_U - \omega_V)\right) \\
    & = \omega_U - \omega_V - (\rho_V + \rho_U)(\omega_U - \omega_V) \\
    & = \omega_U - \omega_V - (1)(\omega_U - \omega_V) \\
    & = 0.
\end{align*}
Thus $ \tilde{\omega}_U$ and $ \tilde{\omega}_V$ agree on the intersection $U \cap V$. As such, we may glue these forms together to define a form $\omega_{U\cup V}$. By construction $\omega_{U\cup V}$ is a smooth form on the neighbourhood $U \cup V$ that restricts to $\omega_{A\cup B}$ on $A\cup B$. Thus $\omega_{A\cup B}$ is a member of $\Omega^q(A\cup B)$, and consequently $\ker{(\iota_A^* - \iota_B^*)}\subseteq Im(r)$.
\end{proof}

\begin{theorem}
Let $\{A_i\}_{i=1}^n$ be a collection of closed, codim-$0$ submanifolds of a Hausdorff $M$. Suppose furthermore that the equalities $\overline{A_{1\cdots m}} = \cap_{p=1}^m \overline{A_{p}}$ hold for all $m \leq n$. Then the sequence 
    $$      0 \rightarrow \Omega^q(A) \rightarrow \bigoplus_{i} \Omega^q(A_i) \rightarrow \bigoplus_{i<j} \Omega^q\left(\overline{A_{ij}}\right) \rightarrow \cdots \rightarrow \Omega^q\left(\overline{A_{1\cdots n}}\right) \rightarrow 0       $$
is exact for all $q$ in $\mathbb{N}$.
\end{theorem}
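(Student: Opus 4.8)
The plan is to induct on $n$, in parallel with the preceding lemma (the case $n=2$, which is the base case) and with the inductive step in the proof of Theorem \ref{THM: MV long exact sequence for de rham}. So assume the statement for every family of at most $n-1$ closed codimension-$0$ submanifolds whose higher intersections are again codimension-$0$ (with corners) and which satisfies the closure--intersection property; this property is clearly inherited by sub-families. Given $\{A_1,\dots,A_n\}$, set $A'=A_1\cup\cdots\cup A_{n-1}$ and $B_i=A_i\cap A_n$ for $i<n$, and write $\check C^{\bullet}(\,\cdot\,)$ for the augmented \v{C}ech complex appearing in the statement, with $\Omega^q$ of the union placed in degree $-1$. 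Sorting the \v{C}ech cochains according to whether the index $n$ occurs in their index tuple produces a short exact sequence of complexes
$$ 0 \longrightarrow D^{\bullet} \longrightarrow \check C^{\bullet}(A_1,\dots,A_n) \longrightarrow \check C^{\bullet}(A_1,\dots,A_{n-1}) \longrightarrow 0 , $$
where $D^{\bullet}$ is the subcomplex of cochains all of whose index tuples contain $n$; surjectivity in degree $-1$, i.e. of the restriction $\Omega^q(A_1\cup\cdots\cup A_n)\to\Omega^q(A')$, is a bump-function extension on the Hausdorff manifold $M$. This is the same data as the three-row diagram used in the proof of Theorem \ref{THM: MV long exact sequence for de rham}. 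By the induction hypothesis $\check C^{\bullet}(A_1,\dots,A_{n-1})$ is acyclic, so the associated long exact sequence reduces the problem to showing that $D^{\bullet}$ is acyclic.

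Unwinding the definitions, $D^{\bullet}$ reads
$$ 0 \to K \to \Omega^q(A_n) \to \bigoplus_{i<n}\Omega^q(\overline{B_i}) \to \bigoplus_{i<j<n}\Omega^q(\overline{B_{ij}}) \to \cdots \to \Omega^q(\overline{B_{1\cdots(n-1)}}) \to 0 , $$
with $K=\ker\big(\Omega^q(A_1\cup\cdots\cup A_n)\to\Omega^q(A')\big)$; the identifications $\overline{A_{i_1\cdots i_p n}}=\overline{B_{i_1\cdots i_p}}$ are tautological, while the closure--intersection hypothesis is used only to check that $\{B_1,\dots,B_{n-1}\}$ again satisfies it, so that the induction hypothesis applies to it. I would then recognise $D^{\bullet}$ as the concatenation, glued along $\Omega^q(A_n)\twoheadrightarrow\Omega^q(A'\cap A_n)\hookrightarrow\bigoplus_{i<n}\Omega^q(\overline{B_i})$, of the short sequence
$$ 0 \to K \to \Omega^q(A_n) \xrightarrow{\ r\ } \Omega^q(A'\cap A_n) \to 0 $$
with the complex $\check C^{\bullet}(B_1,\dots,B_{n-1})$ (note $A'\cap A_n=\bigcup_{i<n}B_i$), the latter being acyclic by the induction hypothesis. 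Granting that the short sequence above is exact and that $r$ is surjective, the elementary splicing lemma for exact sequences yields that $D^{\bullet}$ is acyclic, closing the induction.

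The main obstacle is the exactness of $0\to K\to\Omega^q(A_n)\xrightarrow{r}\Omega^q(A'\cap A_n)\to0$. Surjectivity of $r$ is again a bump-function extension on $M$; injectivity of $K\to\Omega^q(A_n)$ and the inclusion $\operatorname{im}(K\to\Omega^q(A_n))\subseteq\ker r$ are immediate; the real content is the reverse inclusion, namely that a form on $A_n$ vanishing on $A'\cap A_n$ extends by zero across $A'$ to a \emph{smooth} form on $A_1\cup\cdots\cup A_n$. This is precisely the gluing argument of the preceding lemma applied to the closed pair $\{A_n,A'\}$, and it is here that the codimension-$0$ nature of the intersections (equivalently, the closure--intersection property) is indispensable: it prevents $A_n$ and $A'$ from meeting only along a codimension-$1$ locus, where extension by zero would fail --- as already happens for the two closed half-lines in $\mathbb{R}$. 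The one remaining technical nuisance is that $A'$ and $A'\cap A_n$ are codimension-$0$ submanifolds \emph{with corners} rather than with boundary, but the partition-of-unity gluing of the preceding lemma is insensitive to this, so nothing essential changes.
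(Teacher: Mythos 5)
Your proposal is correct in substance, but it takes a genuinely different route from the paper's own proof of this appendix theorem. The paper argues directly at the cochain level: each component $\omega_{i_0\cdots i_p}$ is extended to an open neighbourhood $U_{i_0\cdots i_p}$, a partition-of-unity counterterm $\eta_{i_0\cdots i_p}=\tilde{\omega}_{i_0\cdots i_p}-\sum_r \rho_r(\delta\omega)_{ri_0\cdots i_p}$ is computed to make the extended cochain $\delta$-closed, the classical generalized Mayer--Vietoris for \emph{open} covers is invoked to produce a primitive, and that primitive is restricted back to the closed sets. You instead filter the augmented \v{C}ech complex by whether the index $n$ appears, obtaining a short exact sequence of complexes whose quotient is handled by induction and whose subcomplex $D^{\bullet}$ splices into $0\to K\to\Omega^q(A_n)\to\Omega^q(A'\cap A_n)\to 0$ followed by the complex of the family $\{B_i=A_i\cap A_n\}$, again handled by induction; everything then reduces to the binary gluing lemma. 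This is essentially the strategy the paper uses for Theorem \ref{THM: MV long exact sequence for de rham} in the main text, transplanted to the all-closed-sets setting, and it buys a cleaner separation of concerns: the combinatorics of $\delta$ are absorbed into homological algebra, and the only analytic input is the two-set gluing/extension-by-zero step, which is exactly where the closure--intersection and codimension-$0$ hypotheses enter (you identify this correctly). The paper's direct argument, by contrast, treats all degrees uniformly but hides those hypotheses inside the compatible choice of open extensions. The one point at which your argument is no more rigorous than the paper's --- and you flag it yourself --- is the application of the binary lemma to the pair $(A_n, A')$ with $A'$ a finite \emph{union} of codimension-$0$ closed submanifolds, hence at best a manifold with corners; since the proof of Theorem \ref{THM: MV long exact sequence for de rham} makes the identical move with $\overline{M_{13}}\cup\overline{M_{23}}$, this is not a gap relative to the paper's standards, though a fully careful treatment would verify that the ``restriction of a smooth form on a neighbourhood'' characterisation of $\Omega^q$ of a closed set is all the binary lemma's partition-of-unity argument actually needs.
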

\begin{proof}
    Observe first that $r^*$ is injective as usual, $\delta^2=0$ via combinatoric means (found in say \cite{bott1982differential}), and the last $\delta$ map is surjective via an extension by zero argument as before. Again the non-trivial step is to ensure that $\ker(\delta)\subseteq Im(\delta)$. Let $\omega$ be some element of $\bigoplus\Omega^q\left(\overline{A_{i_0 \cdots i_p}}\right)$ such that $\delta \omega = 0$. Let $U_i$ be a collection of open sets that cover the $A_i$, and let $\rho_i$ be a partition of unity for the union $\bigcup U_i$, subordinate to the open cover $\{U_i\}$. We may extend each component $\omega_{i_0 \cdots i_p}$ into a differential form $$\tilde{\omega}_{i_0 \cdots i_p} \in \bigoplus_{i_0 < \cdots < i_p} \Omega^q(U_{i_0 \cdots i_p}).$$
    As with the previous Lemma, there is no guarantee that $\tilde{\omega}$ will be $\delta$-closed as a differential form over the $U_{i_0 \cdots i_p}$. To remedy this, we will modify each component of $\tilde{\omega}$ with a counterterm, by defining: $$  \eta_{i_0 \cdots i_p} :=\tilde{\omega}_{i_0 \cdots i_p}-  \sum_{r\neq i_0 \cdots i_p} \rho_r (\delta\omega)_{r i_0 \cdots i_p}.    $$
    Observe that each of these new forms will still restrict to $\omega_{i_0 \cdots i_p}$ on the intersections $A_{i_0 \cdots i_p}$, since the counterterm will vanish there. We now have that 
    \begin{align*}
        \delta \eta_{i_0 \cdots i_{p+1}} & = \sum_\alpha (-1)^\alpha \eta_{i_0 \cdots \hat{i_{\alpha}} \cdots i_p} \\
        & = \sum_\alpha (-1)^\alpha \left( \tilde{\omega}_{i_0 \cdots \hat{i_{\alpha}} \cdots i_p} -  \sum_{r\neq i_0 \cdots i_p} \rho_r(\delta\omega)_{r \ i_0 \cdots \hat{i_{\alpha}} \cdots i_p} \right) \\
        & = \sum_{\alpha=0}^p (-1)^\alpha \left( \tilde{\omega}_{i_0 \cdots \hat{i_{\alpha}} \cdots i_p} \right) -  \sum_\alpha (-1)^\alpha\left(\sum_{r\neq i_0 \cdots i_p} \rho_r (\delta\omega)_{r i_0 \cdots \hat{i_{\alpha}} \cdots i_p} \right). 
    \end{align*}
Unpacking this latter term, we see that there are two possible cases:
\begin{enumerate}
    \item if $r\neq i_{\hat{\alpha}}$. In this case, we see may unpack the summation of $\delta\tilde{\omega}$: 
    $$   (\delta\omega)_{r i_0 \cdots \hat{i_{\alpha}} \cdots i_p} = \sum_{\beta = 0, \cdots, p+1 \ \text{or} \beta = r} \omega_{r i_0 \cdots \hat{i_{\alpha}} \cdots \hat{i_{\alpha}} \cdots i_p}. $$ The only non-zero contribution will be when $\beta=r$, since otherwise we are omitting indices $i_{\hat{\alpha}}$ and $i_{\hat{\beta}}$ twice in the overall sum, with opposite sign.   
    \item if $r =  i_{\hat{\alpha}}$. In this case we are adding back in the deleted index, so $\rho_r (\delta\omega)_{r i_0 \cdots \hat{i_{\alpha}} \cdots i_p} = \rho_{i_{\hat\alpha}}(\delta\omega)_{i_0 \cdots \hat{i_{\alpha}} \cdots i_p}  $.
\end{enumerate}
Putting this all together, we have that 
\begin{align*}
    \sum_\alpha (-1)^\alpha\left(\sum_{r\neq i_0 \cdots i_p} \rho_r (\delta\omega)_{r i_0 \cdots \hat{i_{\alpha}} \cdots i_p} \right) & =   \sum_{\alpha = 0}^{p+1} \rho_{i_\alpha} \delta \tilde{\omega}_{i_0 \cdots i_p} + \sum_{r\neq i_0 \cdots i_{p+1}} \rho_r\delta \tilde{\omega}_{i_0 \cdots i_p}  \\
    & = \left(\sum_{i\in I} \rho_i \right)\delta\tilde{\omega}_{i_0 \cdots i_p}.
\end{align*}
We may thus conclude that $\delta \eta = 0$. Using the exactness of the Mayer-Vietoris sequence for open coverings (cf. \cite[§8]{bott1982differential}), there exists some form $\theta$ in $\bigoplus \Omega^q(U_{i_0 \cdots i_{p-1}})$ such that $\delta \theta = \eta$. The restriction of $\theta$ to the intersections $A_{i_0 \cdots i_{p-1}}$ will then yield a smooth $q$-form in $\bigoplus \Omega^q(A_{i_0 \cdots i_{p-1}})$ that maps to $\omega$ under $\delta$. This confirms that $\ker(\delta)\subseteq Im(\delta)$, from which the result follows.
\end{proof}

\end{document}